\documentclass{amsart}

\usepackage{amsmath, amssymb, amsthm,enumitem,bm,pict2e,xcolor,hyperref,tikz-cd,indentfirst,xparse}
\usepackage[foot]{amsaddr}

\hypersetup{
    colorlinks=true,
    linkcolor=blue,
    filecolor=magenta,      
    urlcolor=cyan,
}

\theoremstyle{definition}
\newtheorem{theorem}{Theorem}
\newtheorem*{theorem*}{Theorem}
\numberwithin{theorem}{subsection}
\newtheorem{definition}[theorem]{Definition}
\newtheorem*{definition*}{Definition}
\newtheorem{proposition}[theorem]{Proposition}
\newtheorem{lemma}[theorem]{Lemma}
\newtheorem{remark}[theorem]{Remark}
\newtheorem{example}[theorem]{Example}
\newtheorem{cor}[theorem]{Corollary}

\numberwithin{equation}{section}

\newenvironment{customthm}[1]
  {\innercustomthm}
  {\endinnercustomthm}

\DeclareMathOperator{\Gr}{Gr}
\DeclareMathOperator{\initial}{in}

\newcommand{\la}{\lambda}
\newcommand{\om}{\omega}

\newcommand{\cL}{\mathcal L}
\newcommand{\cJ}{\mathcal J}
\newcommand{\cO}{\mathcal O}
\newcommand{\cC}{\mathcal C}

\newcommand{\cR}{\mathcal R}

\newcommand{\bC}{\mathbb{C}}
\newcommand{\bP}{\mathbb{P}}
\newcommand{\bR}{\mathbb{R}}
\newcommand{\bZ}{\mathbb{Z}}
\newcommand{\bd}{{\mathbf d}}
\newcommand{\one}{\mathbf 1}

\DeclareFontFamily{U}{mathx}{\hyphenchar\font45}
\DeclareFontShape{U}{mathx}{m}{n}{
      <5> <6> <7> <8> <9> <10>
      <10.95> <12> <14.4> <17.28> <20.74> <24.88>
      mathx10
      }{}
\DeclareSymbolFont{mathx}{U}{mathx}{m}{n}
\DeclareMathSymbol{\bigtimes}{1}{mathx}{"91}

\mathcode`l="8000
\begingroup
\makeatletter
\lccode`\~=`\l
\DeclareMathSymbol{\lsb@l}{\mathalpha}{letters}{`l}
\lowercase{\gdef~{\ifnum\the\mathgroup=\m@ne \ell \else \lsb@l \fi}}%
\endgroup


\mathchardef\newbracket=\mathcode`)
\mathcode`)="8000
\begingroup
\catcode`) \active
\gdef){\nolinebreak\newbracket}
\endgroup

\mathchardef\newcomma=\mathcode`,
\mathcode`,="8000
\begingroup
\catcode`, \active
\gdef,{\nolinebreak\newcomma}
\endgroup

\title{Relative poset polytopes and semitoric degenerations}

\author{Evgeny Feigin$^{1}$}
\email{evgfeig@gmail.com}

\author{Igor Makhlin$^{2}$}
\email{iymakhlin@gmail.com}

\address{$^1$School of Mathematical Sciences, Tel Aviv University, Tel Aviv
69978, Israel}
\address{$^2$Technische Universit\"at Berlin}

\dedicatory{Dedicated to the memory of our teacher and colleague Ernest Borisovich Vinberg.}


\begin{document}

\maketitle

\begin{abstract} 
The two best studied toric degenerations of the flag variety are those given by the Gelfand--Tsetlin and FFLV polytopes. Each of them degenerates further into a particular monomial variety which raises the problem of describing the degenerations intermediate between the toric and the monomial ones. Using a theorem of Zhu one may show that every such degeneration is semitoric with irreducible components given by a regular subdivision of the corresponding polytope. This leads one to study the parts that appear in such subdivisions as well as the associated toric varieties. It turns out that these parts lie in a certain new family of poset polytopes which we term \textit{relative poset polytopes}: each is given by a poset and a weakening of its order relation. In this paper we give an in depth study of (both common and marked) relative poset polytopes and their toric varieties in the generality of an arbitrary poset. We then apply these results to degenerations of flag varieties. We also show that our family of polytopes generalizes the family studied in a series of papers by Fang, Fourier, Litza and Pegel while sharing their key combinatorial properties such as pairwise Ehrhart-equivalence and Minkowski-additivity.
\end{abstract}


\section*{Introduction}
Toric degenerations of algebraic varieties is a popular subject in modern mathematics due to various applications in representation theory (\cite{Ca,FaFL,FFL2}), algebraic geometry (\cite{AB,A,Ma}), mirror symmetry (\cite{BCKS,CDK}), etc. A particular subgenre in this field, to which this paper may be attributed, is the study of semitoric degenerations: those all of whose irreducible components are toric (\cite{Ch,KM,MG,Fu}).
The combinatorial nature of toric geometry provides various tools for the study of algebraic varieties, representations of Lie groups and other mathematical objects of interest. The central role in the combinatorial approach is played by convex geometric structures such as polytopes, cones and fans. 

Several methods of obtaining flat degenerations of algebraic varieties are known. One of those is the Gr\"obner degeneration where one considers the defining ideal of a projective embedding and then an initial ideal thereof. In particular, toric degenerations are given by binomial initial ideals. One may then obtain deeper degenerations considering initial ideals of the binomial ideal, all the way down to monomial ones.


There are many papers treating toric degenerations of Grassmannians and flag varieties for simple Lie groups  (see e.g.\ \cite{GL,KM,FaFL,Kn}). The two most studied degenerations correspond to the Gelfand--Tsetlin (\cite{GT}) and Feigin--Fourier--Littelmann--Vinberg (standardly abbreviated as FFLV, \cite{FFL1}) polytopes. Both of them are given by initial binomial ideals of the Pl\"ucker ideal and in both cases one has a distinguished monomial initial ideal naturally stemming from the corresponding notions of semistandard Young tableaux (\cite{MS,FL,M}). It is therefore a natural goal to describe the Gr\"obner degenerations intermediate between the toric and monomials ones. By an elegant (and underappreciated!) result of Zhu (\cite{Zh}) in a fairly general situation such degenerations are semitoric with components given by a regular subdivision of the toric degeneration's polytope (in the sense of~\cite{GKZ}). This brings us to the problem of studying the subdivisions arising in the aforementioned situations. In the Gelfand--Tsetlin case the situation is simpler, since it can be shown that all parts arising in the subdivisions are themselves marked order polytopes (this was the subject of an earlier version of this paper written before the authors learned of Zhu's general result). However, the FFLV case requires us to introduce the notion of relative poset polytopes which subsequently allows us to treat both cases in a uniform fashion. 

Before giving the definition of relative poset polytopes let us recall that the Gelfand--Tsetlin and FFLV polytopes are, respectively, the marked order polytope and marked chain polytope associated with the so-called Gelfand--Tsetlin poset (\cite{ABS}). 
Order and chain polytopes are the basic examples of poset polytopes, the study of which was initiated in~\cite{stan}. Various generalizations of these notions were given in~\cite{ABS,FF,FFLP,FFP,HLLLT}. However, these families of polytopes turn out to be insufficient for our goal and thus relative poset polytopes are introduced.
Our strategy is as follows: we first give a detailed study of relative poset polytopes for arbitrary posets and then apply these results to describe two families of semitoric degenerations of flag varieties. In short, we show that 
\textit{all Gr\"obner degenerations between either the Gelfand--Tsetlin or FFLV toric degeneration and the respective monomial degeneration are semitoric with their irreducible components being toric varieties of relative poset polytopes}.     


Consider a finite set $P$ with two partial orders $<$ and $<'$ such that $<'$ is weaker than $<$. Let $\cJ$ and $\cJ'$ be the sets of order ideals (lower sets) in $P$ with respect to $<$ and $<'$. In particular, $\cJ$ is naturally embedded into $\cJ'$. Here is the main definition of the paper. \begin{definition*}
The \textit{relative poset polytope} $\cR(P,<,<')$ is the convex hull of the points $\mathbf 1_{\max_{<'}J}\in\bR^P$ for all $J\in\cJ$. 
\end{definition*}
In particular, if $<=<'$, then $\cR(P,<,<')$ is the chain polytope of $(P,<)$ and if $<'$ is trivial, then $\cR(P,<,<')$ coincides with the order polytope, hence relative poset polytopes interpolate between the two. Imposing the restriction that $\cJ$ is closed under the Hibi-Li binary operation $*'$ corresponding to the order $<'$ (\cite{HL}) one can show that relative poset polytopes enjoy many nice properties. In particular, we prove the following (where the third property is of key importance for our goals).
\begin{customthm}{A}\label{thma}
Relative poset polytopes enjoy the following properties:
\begin{itemize}
\item all $\cR(P,<,<')$ are normal,
\item every $\cR(P,<,<')$ is Ehrhart-equivalent to $\cO(P,<)$ and $\cC(P,<)$,
\item any Gr\"obner degeneration of the toric variety of $\cR(P,<,<')$ that degenerates further into the monomial variety of $(P,<)$ is semitoric with its components given by relative poset polytopes forming a subdivision of $\cR(P,<,<')$.
\end{itemize}
\end{customthm}

The theorem above can be applied to describe two families of semitoric degenerations of every Grassmann variety (see Theorems \ref{grass1} and \ref{grass2}). However, the generality of these polytopes is not enough to cover the case of arbitrary partial flag varieties. To address this issue we introduce the notion of marked relative poset polytopes $\cR_\la(P,<,<')$ which additionally depend on a subset $P^*\subset P$ of marked elements and a vector $\la\in\bR^{P^*}$. These polytopes are defined by first considering the \textit{fundamental} case $\la\in\{0,1\}^{P^*}$ (in which they are essentially relative poset polytopes of subposets of $(P,<)$) and then defining the general case as a Minkowski sum of fundamental ones. We study these polytopes by applying results obtained in the non-marked case, in particular, we generalize the properties in Theorem~\ref{thma} to the marked setting. As an application of this construction we realize our initial goal and obtain an explicit description of two large families of semitoric degenerations for every type A partial flag variety. 

    
\begin{customthm}{B}
All Gr\"obner degenerations of a partial flag variety intermediate between the Gelfand--Tsetlin (resp.\ FFLV) toric degeneration and the corresponding monomial degeneration are semitoric with their toric components given by parts in a subdivision of the corresponding Gelfand--Tsetlin (resp.\ FFLV) polytope. Each of these parts is itself a marked relative poset polytope.
\end{customthm}

We also point out that this study was partially motivated by the series of papers~\cite{FF,FFLP,FFP} where another family of poset polytopes termed \textit{marked chain-order polytopes} is defined and studied. In particular, several of the key combinatorial properties of marked relative polytopes (such as pairwise Ehrhart equivalence) are shared by marked chain-order polytopes. In the end of this paper we show that the notion of marked relative poset polytopes generalizes that of marked chain-order polytopes. Furthermore, we explain why marked chain-order polytopes are, in a particular sense, insufficient to describe the semitoric degenerations of flag varieties studied here.

The paper is organized as  follows. In Section \ref{preliminaries} we review the notions and results used in this paper. In particular, we recall the definitions of Hibi and Hibi--Li varieties and state Zhu's theorem. In Section \ref{nonmarked} we define and study relative poset polytopes. We then apply the general results to describe semitoric degenerations of Grassmann varieties. In Section \ref{marked} we generalize the definitions and results in the previous section to the case of marked relative poset polytopes. This generalization is used to describe the semitoric degenerations of all partial flag varieties. Finally, we compare our construction with that of Fang--Fourier--Litza--Pegel. 

\textbf{Acknowledgments.} 
The authors would like to thank Xin Fang for valuable comments.

\section{Preliminaries}\label{preliminaries}

\subsection{Gr\"obner degenerations and Gr\"obner fans}

For a positive integer $N$ consider the polynomial ring $S=\bC[X_1,\ldots,X_N]$. For a set of vectors $d^j\in\bZ_{\ge0}^N$ consider a polynomial \[p=\sum_j c_j X^{d^j}\in S\] with $c_j\neq 0$ (for tuples $a_1,\dots,a_n$ and $b_1,\dots,b_n$ we use the notation $a^b=a_1^{b_1}\dots a_n^{b_n}$). Let $(,)$ be the standard scalar product in $\bR^N$. Consider a vector $w\in\bR^N$ and let $\min_j (w,d^j)=m$. The corresponding \textbf{initial part} of $p$ is then \[\initial_w p=\sum_{j|(w,d^j)=m} c_j X^{d^j}.\] In other words, we define a grading on $S$ by setting the grading of $X_i$ equal to $w_i$ and then take the nonzero homogeneous component of $p$ of the least possible grading. For an ideal $I\subset S$ its \textbf{initial ideal} $\initial_w I$ is the linear span of $\{\initial_w p, p\in I\}$ which is easily seen to be an ideal in $S$.

An important property of this construction is that the algebra $S/\initial_w I$ is a flat degeneration of the algebra $S/I$, i.e.\ there exists a flat $\mathbb A^1$-family of algebras with all general fibres isomorphic to $S/I$ and the special fibre isomorphic to $S/\initial_w I$.
\begin{theorem}[see, for instance,~{\cite[Corollary 3.2.6]{HH}}]\label{flatfamily}
There exists a flat $\bC[t]$-algebra $\tilde S$ such that $\tilde S/\langle t-a\rangle\simeq S/I$ for all nonzero $a\in\bC$ while $\tilde S/\langle t\rangle\simeq S/\initial_w I$.
\end{theorem}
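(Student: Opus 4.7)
The result is classical and the approach I would take is the standard \emph{homogenization} construction. As a preliminary reduction, I would pass to integer weights: only the finite set of exponents appearing in a generating set of $I$ matters for the value of $\initial_w$, so $w$ may be replaced by a rational, then integer, vector in the same cell of the Gr\"obner fan, and a global shift arranges all entries $w_i \geq 0$ without affecting $\initial_w$.

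With this in place, introduce the $\bC[t, t^{-1}]$-algebra automorphism $\Phi$ of $S[t, t^{-1}]$ defined by $X_i \mapsto t^{w_i} X_i$, and set
\[
\tilde I \;:=\; \Phi\bigl(I \cdot S[t, t^{-1}]\bigr) \cap S[t], \qquad \tilde S \;:=\; S[t]/\tilde I.
\]
Flatness over $\bC[t]$ is built into this definition: as $\bC[t]$ is a PID, $\tilde S$ is flat iff $t$ acts as a nonzerodivisor, and this is immediate from the construction---if $tf \in \tilde I$ with $f \in S[t]$, then $f = t^{-1}(tf) \in \Phi(I)S[t, t^{-1}] \cap S[t] = \tilde I$. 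The specialization at $t = a \neq 0$ is equally clean: $\Phi|_{t=a}$ is the $\bC$-algebra automorphism $\sigma_a \colon X_i \mapsto a^{w_i} X_i$ of $S$, which carries $I$ bijectively onto $\tilde I|_{t=a}$, so $\tilde S/(t-a) \cong S/I$.

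The heart of the proof is the identification $\tilde S/(t) \cong S/\initial_w I$. For each $p = \sum_j c_j X^{d^j} \in I$ with $m := \min_j(w, d^j)$, the \emph{homogenization}
\[
\tilde p \;:=\; t^{-m}\Phi(p) \;=\; \sum_j c_j\, t^{(w,d^j)-m} X^{d^j}
\]
lies in $\tilde I$ and satisfies $\tilde p|_{t=0} = \initial_w p$, giving the inclusion $\initial_w I \subseteq \tilde I|_{t=0}$ at once. The main obstacle is the reverse inclusion: one must check that the $t$-saturated ideal $\tilde I$ is in fact generated over $S[t]$ by the $\tilde p$'s, rather than being strictly larger. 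The standard way around this is to fix a Gr\"obner basis $g_1, \ldots, g_k$ of $I$ with respect to a term order refining the partial order induced by $w$, and show that (a) $\tilde g_1, \ldots, \tilde g_k$ generate $\tilde I$ in $S[t]$ and (b) their $t=0$ specializations $\initial_w g_i$ generate $\initial_w I$ by Macaulay's theorem. Any $f \in \tilde I$ is then an $S[t]$-linear combination of the $\tilde g_i$, and reducing modulo $t$ expresses $f|_{t=0}$ as an element of $\initial_w I$. This Gr\"obner-basis step is the one piece of genuine combinatorial content, and it is where the textbook references like Herzog--Hibi devote the most care.
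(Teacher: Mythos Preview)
The paper does not supply a proof of this theorem: it is stated with a reference to \cite[Corollary~3.2.6]{HH} and used as a black box. Your proposal is precisely the standard homogenization argument that underlies that reference, so there is nothing to compare.

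One small correction: the remark that ``a global shift arranges all entries $w_i\ge 0$ without affecting $\initial_w$'' is not valid for an arbitrary ideal, since adding a constant vector to $w$ shifts $(w,d^j)$ by a multiple of the total degree of $X^{d^j}$ and hence may change the initial part of an inhomogeneous polynomial. Fortunately the construction does not need $w_i\ge 0$: for any integer $w$ the element $\tilde p = t^{-m}\Phi(p)$ already lies in $S[t]$, and the specialization argument at $t=a\neq 0$ goes through using $\tilde p$ in place of $\Phi(p)$. So the slip is inessential and the rest of your outline is correct.
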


Now let $I$ be homogeneous with respect to the standard grading by total degree and let $\initial_w I$ be a radical ideal. The former obviously implies that $\initial_w I$ is also homogeneous while the latter implies that $I$ is itself radical (see~\cite[Proposition 3.3.7]{HH}). In this case $I$ is the vanishing ideal of a variety $X\subset\bP(\bC^N)$ and $\initial_w I$ is the vanishing ideal of a variety $X^w\subset\bP(\bC^N)$. We obtain the following geometric reformulation of the above theorem.
\begin{cor}\label{gdegen}
There exists a flat family $\mathcal X\subset\bP(\bC^N)\times\mathbb A^1$ over $\mathbb A^1$ such that for the projection $\pi$ onto $\mathbb A^1$ any general fibre $\pi^{-1}(a)$ with $a\neq 0$ is isomorphic to $X$ while the special fibre $\pi^{-1}(0)$ is isomorphic to $X^w$.
\end{cor}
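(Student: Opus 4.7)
The plan is to deduce the geometric corollary from Theorem \ref{flatfamily} by passing from the affine flat family of algebras to a projective flat family of subvarieties via the $\mathrm{Proj}$ construction. The essential point is that the $\bC[t]$-algebra $\tilde S$ produced by Theorem \ref{flatfamily} can be chosen so that it inherits the standard total-degree grading on $S$, with $t$ placed in degree $0$.

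First I would recall (or recover) the explicit construction of $\tilde S$. One takes $\tilde S = \bC[X_1,\ldots,X_N,t]/\tilde I$, where $\tilde I$ is generated, for each $p = \sum_j c_j X^{d^j}$ in a chosen generating set of $I$, by
\[
\tilde p(t) = \sum_j c_j\, t^{(w,d^j) - m}\, X^{d^j}, \qquad m = \min_j (w,d^j).
\]
Since $I$ is homogeneous with respect to total $X$-degree, it admits a generating set of homogeneous elements $p$, and for each such $p$ the polynomial $\tilde p(t)$ is also homogeneous in the $X$-variables (all monomials of $p$ have the same $X$-degree, and multiplying by powers of $t$ preserves that). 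Therefore $\tilde I$ is a homogeneous ideal in $\bC[X_1,\ldots,X_N,t]$ with respect to the grading putting each $X_i$ in degree $1$ and $t$ in degree $0$, and so $\tilde S$ is a graded $\bC[t]$-algebra. Specializing $t\mapsto 1$ recovers $S/I$ and $t\mapsto 0$ recovers $S/\initial_w I$, as in the theorem.

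Next I would set $\mathcal X = \mathrm{Proj}(\tilde S) \subset \bP(\bC^N)\times \mathbb A^1$, where the embedding uses the homogeneous generators $X_1,\ldots,X_N$ of $\tilde S$ over $\bC[t]$ and the projection $\pi$ is induced by the structure map $\bC[t]\to \tilde S$. Flatness of $\pi$ follows from the flatness of $\tilde S$ over $\bC[t]$: each graded component $\tilde S_n$ is a flat, hence (over the PID $\bC[t]$) torsion-free, $\bC[t]$-module, and this grading-piece-wise flatness is exactly what is needed for $\mathrm{Proj}(\tilde S)\to\mathrm{Spec}(\bC[t])$ to be flat. The fibres are computed by base change: for $a\neq 0$, $\pi^{-1}(a) = \mathrm{Proj}(\tilde S\otimes_{\bC[t]} \bC[t]/(t-a)) = \mathrm{Proj}(S/I) = X$, while $\pi^{-1}(0) = \mathrm{Proj}(S/\initial_w I) = X^w$; here I use that taking $\mathrm{Proj}$ commutes with the relevant base changes because $\tilde S$ is generated in degree one over $\bC[t]$.

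The only real content beyond Theorem \ref{flatfamily} is the observation that the deformation can be performed in the graded category, so I expect the main thing to be careful about is precisely this compatibility of the construction in \cite{HH} with the total-degree grading; once that is in place, the rest is the routine translation of flat graded $\bC[t]$-algebras into flat families of projective subschemes. No step looks like a serious obstacle.
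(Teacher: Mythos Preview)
The paper does not actually give a proof of this corollary: it is stated immediately after Theorem~\ref{flatfamily} as a ``geometric reformulation of the above theorem'' and left at that. Your argument is the standard way to make this reformulation precise --- construct $\tilde S$ as a graded $\bC[t]$-algebra (using that $I$ is homogeneous), take $\mathcal X=\mathrm{Proj}(\tilde S)$, and read off flatness and the fibres --- and it is correct.

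One small technical point worth flagging: in your formula $\tilde p(t)=\sum_j c_j\,t^{(w,d^j)-m}X^{d^j}$ you implicitly need the exponents $(w,d^j)-m$ to be nonnegative integers, while the paper allows $w\in\bR^N$. This is harmless, since the Gr\"obner cones are rational polyhedral and one may replace $w$ by an integer vector in the same cone without changing $\initial_w I$; but it is worth saying so explicitly. With that caveat, your proof fills in exactly what the paper omits.
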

This corollary states that $X^w$ is a flat degeneration of $X$, a flat degeneration of this form is known as a \emph{Gr\"obner degeneration}.

We now move on to define the Gr\"obner fan of $I$ which parametrizes its initial ideals. We retain the assumption that $I$ is homogeneous but $I$ need not be radical here. For an ideal $J\subset S$ denote $C(I,J)\subset\bR^N$ the set of points $w$ for which $\initial_w I=J$. The nonempty sets $C(I,J)$ form a partition of $\bR^N$ with $w$ contained in $C(I,\initial_w I)$. This partition is known as the \textbf{Gr\"obner fan} of $I$ (introduced in~\cite{MR}), its basic properties are summed up in the below theorem. This information can be found in Chapters 1 and 2 of~\cite{S} (see also~\cite[Theorem 1.1]{M}). 
\begin{theorem}\label{gfans}
\hfill
\begin{enumerate}[label=(\alph*)]
\item The set $C(I,J)$ is nonempty for only finitely many ideals $J\subset S$.
\item Every nonempty $C(I,J)$ is a relatively open polyhedral cone.
\item Together all the nonempty $C(I,J)$ form a polyhedral fan with support $\bR^N$. This means that every face of the closure $\overline{C(I,J)}$ is itself the closure of some $C(I,J')$. 
\item\label{IJ'J} For nonempty $C(I,J)$ and $C(I,J')$ the cone $\overline{C(I,J)}$ is a face of $\overline{C(I,J')}$ if and only if $J'$ is an initial ideal of $J$ (i.e.\ $C(J,J')$ is nonempty).
\item A nonempty cone $C(I,J)$ is maximal in the Gr\"obner fan if and only if $J$ is monomial.
\end{enumerate}
\end{theorem}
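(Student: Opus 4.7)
The plan is to build the Gr\"obner fan by combining two ingredients: the elementary combinatorics of initial forms of a single polynomial, and the existence of a universal Gr\"obner basis. For a single polynomial $p=\sum_j c_j X^{d^j}$ the initial form $\initial_w p$ depends only on which subset of the exponents $\{d^j\}$ minimizes $(w,d^j)$, and this subset is constant on each relatively open cell of the inner normal fan of the Newton polytope $\mathrm{conv}\{d^j\}\subset\bR^N$. This gives a polyhedral fan $\Sigma_p$ refining $\bR^N$ into finitely many relatively open polyhedral cones; I would first establish this baseline case.

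The crucial reduction is to produce a finite set $G\subset I$---a universal Gr\"obner basis---such that $\initial_w I=\langle\initial_w g:g\in G\rangle$ for every $w\in\bR^N$. The existence of such $G$ is the main obstacle; the standard route is to show that only finitely many reduced Gr\"obner bases appear as one varies monomial orders (using Dickson's lemma to control the possible leading term ideals) and then take their union, or alternatively to run an inductive Buchberger-type construction adding $S$-pair remainders for representative term orders. Granting $G$, the partition of $\bR^N$ into the sets $C(I,J)$ coincides with the common refinement of the finitely many fans $\Sigma_g$ for $g\in G$; this refinement is itself a polyhedral fan with support $\bR^N$ and finitely many cells, yielding (a), (b), and (c).

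For (d), if $w'\in\overline{C(I,J)}$ then for each $g\in G$ and each $w\in C(I,J)$ one has $\initial_{w'}g=\initial_{w'}(\initial_w g)$, so $\initial_{w'}I$ is indeed an initial ideal of $J$; conversely, given $J'=\initial_{w'}J$ and $w\in C(I,J)$, the perturbation $w'+\varepsilon w$ lies in $C(I,J')$ for sufficiently small $\varepsilon>0$, which together with the fan property of the partition yields the face criterion. For (e), a nonempty cone $C(I,J)$ is maximal in the fan exactly when $J$ admits no proper further initial ideal, and this is equivalent to $J$ being monomial: any monomial order provides a monomial initial ideal of $J$, and this coincides with $J$ only if $J$ was already monomial. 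The technical heart of the whole argument is the finiteness step underlying the existence of $G$; once that is in place, all five assertions follow from tracing through the definitions.
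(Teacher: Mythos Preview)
The paper does not prove this theorem; it is quoted from the literature with the sentence ``This information can be found in Chapters~1 and~2 of~\cite{S} (see also~\cite[Theorem~1.1]{M}).'' So there is no proof in the paper to compare against. Your sketch follows the standard route taken in Sturmfels' book: produce a universal Gr\"obner basis $G$ and analyze the Newton polytopes of its elements.

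That said, your proposal contains a genuine inaccuracy. You write that ``the partition of $\bR^N$ into the sets $C(I,J)$ \emph{coincides with} the common refinement of the finitely many fans $\Sigma_g$ for $g\in G$.'' This is false in general: the common refinement only \emph{refines} the Gr\"obner fan, since distinct cells of the refinement can generate the same initial ideal. From the refinement you immediately obtain finiteness (a) and that each $C(I,J)$ is a finite union of relatively open polyhedral cones, but you have not shown that each $C(I,J)$ is itself a single convex cone, nor that the collection satisfies the face-compatibility required of a fan. In the standard treatment this convexity is established separately, e.g.\ by describing $\overline{C(I,J)}$ via linear equalities and inequalities extracted from the reduced Gr\"obner basis of $I$ whose initial terms generate $J$, or by invoking the state polytope and recognizing the Gr\"obner fan as its normal fan. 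Without one of these steps, (b) and (c) are not established.

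A smaller slip: in your argument for (d), the perturbation should be $w+\varepsilon w'$ rather than $w'+\varepsilon w$. Given $w\in C(I,J)$ and $J'=\initial_{w'}J$, one has $\initial_{w+\varepsilon w'}I=\initial_{w'}(\initial_w I)=J'$ for small $\varepsilon>0$, hence $w\in\overline{C(I,J')}$; the direction you wrote does not give this. Also, the forward implication in (d) tacitly uses that $\{\initial_w g:g\in G\}$ is again a universal Gr\"obner basis for $J=\initial_w I$, which deserves a line of justification.
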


\subsection{Posets and distributive lattices}

Consider a finite poset $(P,<)$. Let $\cJ(P,<)$ denote the set of order ideals in $(P,<)$ (we use the term ``order ideal'' synonymously to ``lower set''). It is easy to see that $\cJ(P,<)$ can be viewed as a distributive lattice with union as join, intersection as meet and inclusion as the order relation. A classical result due to Garret Birkhoff known as \textbf{Birkhoff's representation theorem} or the \emph{fundamental theorem of finite distributive lattices} states the following.
\begin{theorem}\label{birkhoff}
For every finite distributive lattice $(\cL,\lor,\land)$ there exists a unique up to isomorphism poset $(P,<)$ such that $(\cL,\lor,\land)$ and $(\cJ(P,<),\cup,\cap)$ are isomorphic.
\end{theorem}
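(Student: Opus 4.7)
The plan is to prove both existence and uniqueness by identifying $P$ with the poset of join-irreducible elements of $\cL$. Recall that an element $x\in\cL$ is \emph{join-irreducible} if $x$ is not the bottom element and, whenever $x=a\vee b$, one has $x=a$ or $x=b$. Set $P$ to be the subset of join-irreducibles of $\cL$, equipped with the order inherited from $<$.

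For existence, I would define the candidate isomorphism $\phi\colon\cL\to\cJ(P,<)$ by $\phi(x)=\{p\in P:p\le x\}$. The first step is routine: $\phi(x)$ is an order ideal in $P$ since $<$ on $P$ is the restriction of $<$ on $\cL$. Next I would verify the following three properties in turn.
\begin{enumerate}[label=(\roman*)]
\item \emph{Surjectivity.} Given an order ideal $J\in\cJ(P,<)$, consider $x_J=\bigvee_{p\in J}p$ (with the empty join being the bottom element); one must check that $\phi(x_J)=J$. The nontrivial inclusion $\phi(x_J)\subseteq J$ is where the argument is most delicate: if $q\in P$ satisfies $q\le\bigvee_{p\in J}p$, distributivity gives $q=q\wedge\bigvee_{p\in J}p=\bigvee_{p\in J}(q\wedge p)$, and join-irreducibility of $q$ forces $q=q\wedge p$ for some $p\in J$, whence $q\le p$ and $q\in J$.
\item \emph{Injectivity.} By finiteness, every $x\in\cL$ equals the join of the join-irreducibles below it, so $x$ is recovered from $\phi(x)$.
\item \emph{Lattice maps.} The equality $\phi(x\vee y)=\phi(x)\cup\phi(y)$ follows from join-irreducibility just as in (i); the equality $\phi(x\wedge y)=\phi(x)\cap\phi(y)$ is immediate from the definition of meet.
\end{enumerate}

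For uniqueness, I would show that the join-irreducibles of $\cJ(P,<)$ are precisely the principal order ideals $\{q\in P:q\le p\}$ indexed by $p\in P$, and that this correspondence is an isomorphism of posets $P\cong\mathrm{Jirr}(\cJ(P,<))$. Thus if $(\cL,\lor,\land,<)\cong(\cJ(P,<),\cup,\cap,\subset)$, then $P$ is forced to be isomorphic to the poset of join-irreducibles of $\cL$, which depends only on the lattice structure of $\cL$.

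The main obstacle is the surjectivity step (i); this is the unique place where distributivity is essential, and indeed it is where a non-distributive lattice (such as $M_3$ or $N_5$) fails the theorem. Everything else reduces to standard manipulations with joins and meets together with the observation that a finite lattice has ``enough'' join-irreducibles.
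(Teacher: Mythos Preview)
Your proof is correct and is the standard argument. The paper, however, does not give its own proof of this theorem: it is stated as a classical result attributed to Birkhoff and referenced to \cite[Theorem 9.1.7]{HH}. Immediately after the statement the paper remarks that $(P,<)$ can be realized explicitly as the subposet of join-irreducible elements of $\cL$ --- precisely the construction you carry out --- but notes that this notion is not otherwise used in the paper.
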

In fact, Birkhoff's theorem also provides an explicit realization of $(P,<)$ as the subposet of join-irreducible elements in $\cL$ but we will not be using this notion here, see~\cite[Theorem 9.1.7]{HH}.

A small remark on notation: when speaking of distributive lattices (which appear in this paper as lattices of order ideals) we will often use the underlying set to denote the whole lattice, e.g.\ write $\cL$ instead of $(\cL,\lor,\land)$. The lattice structure will be clear from the context. 


We say that an order relation $\prec $ on the set $P$ is \textbf{stronger} than $<$ ($<$ is \textbf{weaker} than $\prec$) if $p<q$ implies $p\prec q$. Note that for such a $\prec $ any order ideal in $\cJ(P,\prec )$ lies in $\cJ(P,<)$, so the former set is naturally embedded into the latter. Obviously, this is an embedding of lattices. Recall that the height of a distributive lattice is the length of any maximal chain. One easily checks that the height of $\cJ(P,<)$ is $|P|+1$.
\begin{proposition}\label{sublattices}
For every sublattice $\cL\subset\cJ(P,<)$ of height $|P|+1$ there exists a unique order $\prec $ on $P$ that is stronger than $<$ for which $\cJ(P,\prec )=\cL$.
\end{proposition}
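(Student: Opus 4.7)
The plan is to define the candidate order $\prec$ directly from $\cL$, verify the partial order axioms and the comparison with $<$, then establish $\cJ(P,\prec)=\cL$ by two inclusions exploiting the sublattice structure, and finally deduce uniqueness from the fact that the definition of $\prec$ is forced.

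Concretely, I would declare $p\prec q$ if and only if every $J\in\cL$ containing $q$ also contains $p$. Reflexivity and transitivity are immediate, and $\prec$ is stronger than $<$ because $\cL\subseteq\cJ(P,<)$: if $p<q$ and $J\in\cL$ contains $q$, then $J$ is a $<$-ideal, so $p\in J$. For antisymmetry the height hypothesis enters: any maximal chain in $\cL$ has $|P|+1$ elements and, being a strictly nested chain of subsets of $P$ of this length, must run from $\emptyset$ to $P$ adding exactly one element at each step. Hence for any $p\neq q$ some element of this chain contains one but not the other, which rules out $p\prec q$ and $q\prec p$ simultaneously.

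For the equality $\cJ(P,\prec)=\cL$, the inclusion $\cL\subseteq\cJ(P,\prec)$ is immediate from the definition of $\prec$. For the reverse, the key observation is that for each $p\in P$ the principal $\prec$-ideal $\{q:q\prec p\}$ coincides with the intersection $\bigcap\{J\in\cL:p\in J\}$, which is a finite meet in the sublattice $\cL$ and therefore belongs to $\cL$. Consequently any $K\in\cJ(P,\prec)$, being the union of the principal $\prec$-ideals of its elements, is a finite join of elements of $\cL$ and lies in $\cL$. Uniqueness is then automatic: for any order $\prec'$ with $\cJ(P,\prec')=\cL$, the principal $\prec'$-ideal of each $p$ must equal $\bigcap\{J\in\cL:p\in J\}$, which depends only on $\cL$, so $\prec'=\prec$. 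I do not expect any step to be technically hard; the main conceptual move is noticing the dual role played by intersections in $\cL$ — simultaneously as meets in the sublattice and as principal ideals for the new order.
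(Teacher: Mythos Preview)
Your proposal is correct and follows essentially the same route as the paper: the paper defines $\prec$ by the identical condition (for $p\neq q$), uses the maximal chain of length $|P|+1$ to separate distinct elements and obtain antisymmetry, and proves $\cJ(P,\prec)=\cL$ via the same observation that principal $\prec$-ideals are intersections of elements of $\cL$. The only minor difference is in the uniqueness argument---the paper notes that distinct orders yield distinct ideal lattices, while you argue that the principal ideals are forced---but these are equivalent and equally short.
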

\begin{proof}
Since the height of $\cL$ is $|P|+1$, it contains an increasing chain $J_0\subset\dots\subset J_{|P|}$ of order ideals. This means that $|J_i|=i$ and every $p\in P$ appears as $|J_i|\backslash|J_{i-1}|$ for precisely one $i$. This implies that when $p\neq q$ we can always find $J\in\cL$ that contains exactly one of $p$ and $q$. For $p\neq q$ set $p\prec q$ if and only if every $J\in\cL$ with $q\in J$ also contains $p$, we see that $\prec$ is antisymmetric and thus an order relation.

Obviously, $\prec $ is stronger than $<$. It is also immediate that any $J\in\cL$ lies in $\cJ(P,\prec )$. To show that all order ideals in $(P,\prec )$ lie in $\cL$ it suffices to show that the principal order ideals do, since every order ideal is a union of principal ones. However, the principal order ideal in $(P,\prec )$ generated by $p$ is precisely the intersection of all $J\in\cL$ that contain $p$, hence it lies in $\cL$.

One also sees that if orders $\prec $ and $\prec '$ on $P$ are different, then $\cJ(P,\prec )\neq\cJ(P,\prec ')$ which implies the uniqueness of $\prec $ and completes the proof.\qedhere

\end{proof}

\begin{remark}\label{categorical}
In category-theoretic terms one can view $\cJ$ as a contravariant functor from the category of finite posets to the category of finite distributive lattices. Birkhoff's representation theorem then shows that this functor is a duality of categories. The above proposition essentially asserts that the monomorphism $\cL\hookrightarrow\cJ(P,<)$ corresponds to a bijective epimorphism under this duality.
\end{remark}

Proposition~\ref{sublattices} provides a one-to-one correspondence between sublattices of maximal height in $\cJ(P,<)$ and orders on $P$ stronger than $<$. Now note that a linearly ordered sublattice of maximal height in $\cJ(P,<)$ is precisely a maximal chain in $\cJ(P,<)$ and that $\cJ(P,\prec )$ is a linearly ordered lattice if and only if $\prec $ is a linear order itself. This shows that as a special case we obtain a one-to-one correspondence between maximal chains in $\cJ(P,<)$ and linearizations of $<$ (linear orders that are stronger than $<$).
\begin{cor}
For every maximal chain $C\subset\cJ(P,<)$ there exists a unique linearization $\prec$ of $<$ such that $C=\cJ(P,\prec)$. 
\end{cor}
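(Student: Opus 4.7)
The plan is to derive this corollary as a direct specialization of Proposition~\ref{sublattices}, using the two equivalences highlighted in the paragraph immediately preceding it.

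First, I would observe that a maximal chain $C \subset \cJ(P,<)$ is in particular a sublattice of $\cJ(P,<)$ (a totally ordered subset is closed under $\cup$ and $\cap$, since for comparable order ideals the union and intersection are just the larger and smaller one). Moreover, as a maximal chain in a lattice of height $|P|+1$, the chain $C$ itself has height $|P|+1$. Thus $C$ meets the hypothesis of Proposition~\ref{sublattices}, which supplies a unique order $\prec$ on $P$ that is stronger than $<$ and satisfies $\cJ(P,\prec) = C$.

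Next, I would verify that this $\prec$ is in fact a linear order, i.e.\ a linearization of $<$. Since $C$ is totally ordered by inclusion, $\cJ(P,\prec)$ is a totally ordered lattice. But if $\prec$ failed to be total, there would be incomparable elements $p, q \in P$, and then the principal order ideals generated by $p$ and $q$ in $(P,\prec)$ would be incomparable members of $\cJ(P,\prec)$, contradicting linearity. Hence $\prec$ is a linearization of $<$. Conversely, for any linearization $\prec$ of $<$ the lattice $\cJ(P,\prec)$ is a chain of length $|P|+1$ in $\cJ(P,<)$, hence a maximal chain; so uniqueness of $\prec$ follows directly from the uniqueness clause in Proposition~\ref{sublattices}.

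There is no real obstacle here: the content is entirely packaged in the previous proposition, and the corollary is just the restriction of the bijection \{sublattices of height $|P|+1$\} $\leftrightarrow$ \{orders stronger than $<$\} to the sub-bijection \{maximal chains\} $\leftrightarrow$ \{linearizations\}. The only thing that needs explicit checking is that ``totally ordered sublattice'' on one side matches ``total order on $P$'' on the other, which is the short argument above.
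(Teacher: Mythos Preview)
Your proposal is correct and follows essentially the same approach as the paper: the corollary is stated without a separate proof environment because the argument is given in the paragraph immediately preceding it, which (just as you do) restricts the bijection of Proposition~\ref{sublattices} to the special case where the sublattice is totally ordered, noting that this happens exactly when $\prec$ is linear.
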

For a linear order $\prec$ on $P$ we will use the shorthand $C_\prec=\cJ(P,\prec)$.

\subsection{Hibi varieties and order polytopes}

As before, we consider a finite poset $(P,<)$. For every $J\in\cJ(P,<)$ introduce a variable $X_J$ and let $\bC[\cJ(P,<)]$ be the polynomial ring in these variables. The \textbf{Hibi ideal} of $(P,<)$ is the ideal $I^h_{P,<}\subset\bC[\cJ(P,<)]$ generated by the elements 
\begin{equation}\label{hibibinomial}
d(J_1,J_2)=X_{J_1}X_{J_2}-X_{J_1\cup J_2}X_{J_1\cap J_2} 
\end{equation}
for all $J_1,J_2\in\cJ(P,<)$. Note that $d(J_1,J_2)\neq 0$ only if $J_1$ and $J_2$ are $\subset$-incomparable, i.e.\ neither contains the other. The notion of Hibi ideals originates from~\cite{H}, it is more standard to associate the ideal with a distributive lattice rather than a poset but, in view of Birkhoff's theorem, these are two equivalent approaches.

We have the following alternative characterization of the Hibi ideal. Consider variables $z_p$ indexed by $p\in P$ and let $\bC[P,t]$ denote the polynomial ring in the $z_p$ and an additional variable $t$. Define a homomorphism $\varphi:\bC[\cJ(P,<)]\to \bC[P,t]$ by \[\varphi(X_J)=t\prod_{p\in J}z_p.\] It is immediate that the binomials $d(J_1,J_2)$ lie in the kernel of $\varphi$. Moreover, the following holds.
\begin{proposition}[{\cite[Section 2]{H}}]\label{kernel}
$I^h_{P,<}$ is the kernel of $\varphi$.
\end{proposition}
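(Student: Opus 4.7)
The plan is to prove Proposition~\ref{kernel} via the standard monomial method. The inclusion $I^h_{P,<}\subseteq\ker\varphi$ is immediate: as multisets $J_1\sqcup J_2=(J_1\cup J_2)\sqcup(J_1\cap J_2)$, so $\varphi(X_{J_1}X_{J_2})=t^2\prod_{p\in J_1}z_p\prod_{p\in J_2}z_p=\varphi(X_{J_1\cup J_2}X_{J_1\cap J_2})$, and therefore every generator $d(J_1,J_2)$ maps to zero. For the reverse inclusion, call a monomial $X_{J_1}\cdots X_{J_k}\in\bC[\cJ(P,<)]$ \emph{standard} if its factors can be ordered so that $J_1\subseteq\cdots\subseteq J_k$, i.e., the $J_i$ form a chain. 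I would prove the two statements that together yield $\ker\varphi=I^h_{P,<}$: (a) the standard monomials span the quotient $\bC[\cJ(P,<)]/I^h_{P,<}$, and (b) distinct standard monomials are mapped by $\varphi$ to distinct monomials in $\bC[P,t]$.

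For (a), given any non-standard monomial, pick an incomparable pair $J_i,J_j$ among its variables and apply $d(J_i,J_j)$ to replace $X_{J_i}X_{J_j}$ by $X_{J_i\cup J_j}X_{J_i\cap J_j}$ modulo $I^h_{P,<}$. To see this procedure terminates, use the statistic $\sigma(X_{J_1}\cdots X_{J_k}):=\sum_i|J_i|^2$. With $a=|J_i\cap J_j|$, $b=|J_i\setminus J_j|$, $c=|J_j\setminus J_i|$, a direct computation yields
\[
|J_i\cup J_j|^2+|J_i\cap J_j|^2-|J_i|^2-|J_j|^2=2bc,
\]
which is strictly positive exactly when $J_i,J_j$ are incomparable (both $b,c>0$). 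Thus each reduction strictly increases $\sigma$; since the total degree $k$ is preserved and $\sigma$ is bounded above by $k|P|^2$, the process must halt at a standard monomial.

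For (b), a standard monomial $X_{J_1}\cdots X_{J_k}$ with $J_1\subseteq\cdots\subseteq J_k$ has image $\varphi(X_{J_1}\cdots X_{J_k})=t^k\prod_{p\in P}z_p^{m_p}$, where $m_p=\#\{i:p\in J_i\}$. The chain is recovered from the monomial via $J_i=\{p:m_p\geq k-i+1\}$, so distinct standard monomials have distinct images and hence linearly independent images in $\bC[P,t]$. Combined with (a), this forces the surjection $\bC[\cJ(P,<)]/I^h_{P,<}\twoheadrightarrow\bC[\cJ(P,<)]/\ker\varphi$ to be an isomorphism, giving $\ker\varphi=I^h_{P,<}$. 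The only nontrivial ingredient is the termination argument in (a), which the statistic $\sigma$ handles cleanly; I do not anticipate any other serious obstacle.
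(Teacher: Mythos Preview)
Your proof is correct. The paper does not supply its own proof of this proposition; it simply attributes the result to Hibi~\cite{H}. Your argument is the classical standard-monomial proof, and both of its ingredients reappear later in the paper in more general settings: the straightening statistic $\sum_i|J_i|^2$ is equivalent to the paper's $\sum_i|P\backslash J_i|^2$ used in~\eqref{squaresineq} (they differ by a constant on monomials of fixed degree, since $\sum_i|J_i|$ is preserved by the replacement $\{J_1,J_2\}\mapsto\{J_1\cup J_2,J_1\cap J_2\}$), and your recovery formula $J_i=\{p:m_p\ge k-i+1\}$ is exactly the bijection invoked in the proof of Lemma~\ref{ehrhart}.
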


The corresponding \textbf{Hibi variety} is the subvariety $H(P,<)\subset\bP(\bC^{\cJ(P,<)})$ cut out by $I^h_{P,<}$. It turns out that this variety is the toric variety of the order polytope of $(P,<)$ which we now define.

The \textbf{order polytope} $\cO(P,<)$ is a convex polytope in the space $\bR^P$ consisting of points $x$ with coordinates $(x_p, p\in P)$ satisfying $0\le x_p\le 1$ for all $p$ and $x_p\le x_q$ whenever $p>q$. It is immediate from the definition that the set of integer points in $\cO(P,<)$ is the set of indicator functions $\bm 1_J$ of order ideals $J\subset P$. It is also easily seen that each of these integer points is a vertex of $\cO(P,<)$ (see~\cite[Corollary 1.3]{stan}). In particular, the vertices (and integer points) of $\cO(P,<)$ are in natural bijection with $\mathcal J(P,<)$.

It should be pointed out that the original definition in~\cite{stan} contains the reverse (and, perhaps, more natural) inequality $x_p\le x_q$ whenever $p<q$, this amounts to reflecting $\cO(P,<)$ in the point $(\frac12,\dots,\frac12)$. We use the above definition to adhere to the standard convention of considering order ideals in Birkhoff's theorem (rather than order filters).

The order polytope $\cO(P,<)$ has the important property of being \textbf{normal} (see, for instance,~\cite[Theorem 2.5]{FF}). This means that for any integer $k>0$ every integer point in its dilation $k\cO(P,<)$ can be expressed as the sum of $k$ (not necessarily distinct) integer points in $\cO(P,<)$, a property sometimes also known as being ``integrally closed''. In other words, this means that the set $k\cO(P,<)\cap\bZ^P$ is the $k$-fold Minkowski sum of $\cO(P,<)\cap\bZ^P$ with itself.

The toric variety of a normal polytope is easy to describe, its homogeneous coordinate ring is the semigroup ring of the semigroup generated by the polytope's integer points, see~\cite[\textsection2.3]{CLS}. In the case of $\cO(P,<)$ this ring is precisely the image of the map $\varphi$ considered in Proposition~\ref{kernel}. This leads to the following fact which is also sometimes attributed to~\cite{H}.
\begin{proposition}\label{toricvariety}
$H(P,<)$ is isomorphic to the toric variety of the polytope $\cO(P,<)$.
\end{proposition}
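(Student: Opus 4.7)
The plan is to identify the homogeneous coordinate rings on both sides and invoke the normality of $\cO(P,<)$ to conclude that the toric variety is $\Proj$ of the semigroup algebra generated by the polytope's integer points.

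First I would observe that by Proposition~\ref{kernel}, the homogeneous coordinate ring of the Hibi variety is
\[
\bC[\cJ(P,<)]/I^h_{P,<} \;\cong\; \mathrm{Im}(\varphi),
\]
where $\mathrm{Im}(\varphi)$ is the $\bC$-subalgebra of $\bC[P,t]$ generated by the monomials $\varphi(X_J)=t\prod_{p\in J} z_p$ as $J$ ranges over $\cJ(P,<)$. Under the identification of a monomial $t^k\prod_{p\in P}z_p^{a_p}$ with the lattice point $((a_p)_{p\in P},k)\in\bZ^P\times\bZ$, the generator $\varphi(X_J)$ corresponds exactly to $(\bm{1}_J,1)$. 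Since the integer points of $\cO(P,<)$ are precisely the indicator functions $\bm{1}_J$, the set of generators is exactly $\{(v,1):v\in\cO(P,<)\cap\bZ^P\}$, and the grading by powers of $t$ matches the standard grading of the coordinate ring inherited from the total degree in the $X_J$.

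Next I would invoke normality. The toric variety of the lattice polytope $\cO(P,<)$ embedded in projective space via its integer points has homogeneous coordinate ring equal to the semigroup algebra of the semigroup $S\subset\bZ^P\times\bZ$ generated by $\{(v,1):v\in\cO(P,<)\cap\bZ^P\}$, provided $\cO(P,<)$ is normal; this is precisely the content of \cite[\textsection 2.3]{CLS}. But the normality of $\cO(P,<)$ was already recalled just above the statement: the $k$-th graded piece of the semigroup algebra of $S$ coincides with $\bC\cdot(k\cO(P,<)\cap\bZ^P)$. Thus the semigroup algebra of $S$ is canonically identified with $\mathrm{Im}(\varphi)$, and both are isomorphic as $\bZ_{\ge 0}$-graded algebras to the homogeneous coordinate ring of the toric variety of $\cO(P,<)$.

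Finally I would conclude by taking $\Proj$: since $H(P,<)\subset\bP(\bC^{\cJ(P,<)})$ is defined by the homogeneous ideal $I^h_{P,<}$, we have
\[
H(P,<)\;=\;\Proj\bigl(\bC[\cJ(P,<)]/I^h_{P,<}\bigr)\;\cong\;\Proj\bigl(\mathrm{Im}(\varphi)\bigr),
\]
and the right-hand side is the toric variety of $\cO(P,<)$ by the previous step. There is no real obstacle here; the only subtlety is making sure the grading used in $\Proj$ on the Hibi side (total degree in the $X_J$) corresponds to the $t$-grading on the semigroup algebra side, which is immediate because every generator $\varphi(X_J)$ has $t$-degree one.
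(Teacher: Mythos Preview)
Your proposal is correct and follows essentially the same approach as the paper: the paper's argument (given in the paragraph preceding the proposition rather than in a proof environment) is precisely that $\mathrm{Im}(\varphi)$ is the semigroup ring generated by the integer points of $\cO(P,<)$, and that normality together with \cite[\S2.3]{CLS} identifies this with the homogeneous coordinate ring of the toric variety. You have simply written this out in more detail, including the explicit $\Proj$ step.
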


\subsection{Hibi-Li varieties and chain polytopes}

Stanley~(\cite{stan}) also defines another polytope associated with a finite poset $(P,<)$, its \textbf{chain polytope} $\cC(P,<)$. It is the set of all $x\in\bR^P$ such that all $x_p\ge 0$ and for any chain $p_1<\dots<p_k$ we have $x_{p_1}+\dots+x_{p_k}\le 1$.

It is easily seen (and shown in~\cite{stan}) that the set of integer points in $\cC(P,<)$ consists of indicator functions $\mathbf 1_A$ for antichains $A$ in $(P,<)$. Moreover, all of these points are vertices of the polytope. Recall that there is a bijection between the set of order ideals and the set of antichains in $(P,<)$ which maps $J\in\cJ(P,<)$ to the antichain $\max_< J$ of its maximal elements. We see that the integer points in $\cC(P,<)$ are in a natural bijection with the integer points of $\cO(P,<)$. Moreover, the following holds.
\begin{lemma}\label{ehrhart}
The polytopes $\cO(P,<)$ and $\cC(P,<)$ are Ehrhart-equivalent: for any $m\in\bZ_{\ge 0}$ the polytopes $m\cO(P,<)$ and $m\cC(P,<)$ have the same number of integer points. This number is equal to the number of weakly increasing tuples $J_1\subseteq\dots\subseteq J_m$ of order ideals in $(P,<)$.
\end{lemma}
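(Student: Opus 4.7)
The plan is to identify each of the two integer-point sets with the set of weakly increasing $m$-tuples of order ideals, using Stanley's transfer map as the bridge.

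First I would parametrize $m\cO(P,<)\cap\bZ^P$. An integer point of $m\cO(P,<)$ is precisely a function $f\colon P\to\{0,1,\dots,m\}$ with $f(p)\le f(q)$ whenever $p>q$. Given such $f$, set $J_i=\{p\in P:f(p)\ge m-i+1\}$ for $i=1,\dots,m$. The order-reversing property forces each $J_i$ to be a lower set in $(P,<)$, and the definition forces $J_1\subseteq\cdots\subseteq J_m$. Conversely, a weakly increasing tuple $J_1\subseteq\cdots\subseteq J_m$ of order ideals determines $f$ by $f(p)=|\{i:p\in J_i\}|$, which is easily checked to be order-reversing and to take values in $\{0,\dots,m\}$. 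These two procedures are mutually inverse, giving the desired bijection and hence the stated enumeration of $|m\cO(P,<)\cap\bZ^P|$.

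Next I would deploy Stanley's transfer map $\varphi\colon m\cO(P,<)\cap\bZ^P\to m\cC(P,<)\cap\bZ^P$ defined, in the convention used in the paper (so $x_p\le x_q$ when $p>q$), by
\[
\varphi(f)(p)=f(p)-\max_{q>p}f(q),
\]
with the convention that the maximum is $0$ when $p$ is $<$-maximal. Nonnegativity of $\varphi(f)$ is immediate from the order-reversing property of $f$. For a chain $p_1>p_2>\cdots>p_k$ in $(P,<)$, a short telescoping estimate bounds $\sum_i\varphi(f)(p_i)$ by $f(p_k)\le m$, so $\varphi(f)\in m\cC(P,<)$. An explicit inverse is
\[
\psi(g)(p)=\max_{C}\sum_{q\in C}g(q),
\]
where $C$ ranges over chains in $(P,<)$ with minimum $p$; one verifies that $\psi(g)$ is order-reversing and bounded by $m$, and that $\varphi\circ\psi$ and $\psi\circ\varphi$ are identities.

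Combining the two steps yields $|m\cO(P,<)\cap\bZ^P|=|m\cC(P,<)\cap\bZ^P|$ and identifies this common value with the number of weakly increasing $m$-tuples of order ideals. The only mildly delicate point is orientational: because the paper uses the reflected convention for $\cO(P,<)$ (coordinates decrease along $<$), the standard transfer map must be mirrored as above, with maxima taken over elements \emph{above} $p$. This aside, everything is classical and could alternatively just be cited from \cite{stan}.
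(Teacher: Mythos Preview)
Your proof is correct and follows essentially the same route as the paper: the bijection you give between integer points of $m\cO(P,<)$ and weakly increasing $m$-tuples of order ideals is exactly the one in the paper (via $J_i=\{p:x_p\ge m-i+1\}$ and $x=\sum_i\mathbf 1_{J_i}$), and for the Ehrhart equivalence the paper simply cites \cite[Theorem~4.1]{stan} whereas you spell out Stanley's transfer map argument. The only difference is the level of detail, not the method.
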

\begin{proof}
The first claim is proved in \cite[Theorem 4.1]{stan}. Also, for a weakly increasing tuple $J_1\subseteq\dots\subseteq J_m$ of order ideals the integer point $\mathbf 1_{J_1}+\dots+\mathbf 1_{J_m}$ lies in $m\cO(P,<)$ and for an integer point $x\in m\cO(P,<)$ the order ideals $J_i=\{p|x_p\ge m-i+1\}$ form a weakly increasing tuple. This proves the second claim.
\end{proof}

Similarly to the Hibi variety, the toric variety of $\cC(P,<)$ can be realized as the zero set of a certain ideal in $\bC[\cJ(P,<)]$. We will call this variety the \textbf{Hibi-Li variety} since it was studied in~\cite{HL}.

Following the latter paper we introduce another binary operation $*_{P,<}$ on $\cJ(P,<)$. 
\begin{definition}
For $J_1,J_2\in\cJ(P,<)$ denote by $J_1 *_{P,<} J_2$ the ideal generated by the antichain \begin{equation}\label{star}
(J_1\cap J_2)\cap (\max\nolimits_< J_1\cup\max\nolimits_< J_2)=\max\nolimits_<(J_1\cap J_2)\cap (\max\nolimits_< J_1\cup\max\nolimits_< J_2).
\end{equation}
\end{definition}
The key property of this operation (which can be viewed as an alternative definition) is the equality 
\begin{equation}\label{stardef}
\mathbf 1_{\max_< J_1}+\mathbf 1_{\max_< J_2}=\mathbf 1_{\max_< (J_1\cup J_2)}+\mathbf 1_{\max_< (J_1*_{P,<} J_2)} 
\end{equation}
which is easily verified. The \textbf{Hibi-Li ideal} is then the ideal $I^{hl}_{P,<}\subset\bC[\cJ(P,<)]$ generated by the binomials \[X_{J_1}X_{J_2}-X_{J_1\cup J_2}X_{J_1*_{P,<}J_2}.\]

Similarly to the order polytope case, we have the following.
\begin{proposition}[\cite{HL}, Section 2]
The ideal $I^{hl}_{P,<}$ is the kernel of the map from $\bC[\cJ(P,<)]$ to $\bC[P,t]$ taking $X_J$ to $t\prod_{p\in\max_< J}z_p$. The subvariety cut out by $I^{hl}_{P,<}$ in $\bP(\bC^{\cJ(P,<)})$ is the toric variety of $\cC(P,<)$.
\end{proposition}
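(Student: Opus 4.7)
Write $\psi: \bC[\cJ(P,<)] \to \bC[P,t]$ for the homomorphism $X_J \mapsto t\prod_{p\in\max_< J} z_p$; the argument parallels that of Proposition~\ref{kernel}, with the Hibi operation $\cap$ replaced by $*_{P,<}$. The inclusion $I^{hl}_{P,<}\subseteq\ker\psi$ is immediate from~(\ref{stardef}): both monomials in each generator $X_{J_1}X_{J_2}-X_{J_1\cup J_2}X_{J_1*_{P,<}J_2}$ have $\psi$-image $t^2 z^{\mathbf{1}_{\max_< J_1}+\mathbf{1}_{\max_< J_2}}$.

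For the reverse inclusion, $\ker\psi$ is a toric ideal, generated by pure binomials $X^a-X^b$ with $\psi(X^a)=\psi(X^b)$, so it suffices to place each such binomial in $I^{hl}_{P,<}$. The plan is a sorting argument: every monomial $X_{J_1}\cdots X_{J_k}$ is congruent modulo $I^{hl}_{P,<}$ to a \emph{sorted} monomial $X_{K_1}\cdots X_{K_k}$ with $K_1\subseteq\cdots\subseteq K_k$. Existence follows by repeatedly replacing an incomparable pair $(J_i,J_j)$ by $(J_i\cup J_j,\,J_i*_{P,<}J_j)$, which is $\subseteq$-comparable since $J_i*_{P,<}J_j\subseteq J_i\cap J_j\subseteq J_i\cup J_j$; termination is witnessed by a lex monovariant -- order $\cJ(P,<)$ by any linear extension of $\supseteq$ and compare the resulting multiplicity vectors lexicographically, so that each swap strictly increases the vector at the coordinate $J_i\cup J_j$. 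For uniqueness, I claim the map $\Phi:(J_1\subseteq\cdots\subseteq J_k)\mapsto\sum_i\mathbf{1}_{\max_< J_i}$ is a bijection onto $k\cC(P,<)\cap\bZ^P$: identifying $\Phi$ as the composition of the order-polytope bijection in the proof of Lemma~\ref{ehrhart} with Stanley's piecewise-linear transfer map $\tau:\cO(P,<)\to\cC(P,<)$ (where one verifies directly that $\tau(\mathbf{1}_J)=\mathbf{1}_{\max_< J}$ and that $\tau$ is additive on sums $\sum_i\mathbf{1}_{J_i}$ with $J_1\subseteq\cdots\subseteq J_k$), the claim reduces to the classical bijectivity of $\tau$ on integer points. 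Hence equal $\psi$-images force coincident sorted forms, so $X^a-X^b\in I^{hl}_{P,<}$.

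Finally, the bijectivity of $\Phi$ also yields normality of $\cC(P,<)$ -- every $x\in k\cC(P,<)\cap\bZ^P$ decomposes via $\Phi$ as $\sum_i\mathbf{1}_{\max_< J_i}$, a Minkowski sum of $k$ integer points of $\cC(P,<)$ -- so that $\mathrm{Im}\,\psi\simeq\bC[\cJ(P,<)]/I^{hl}_{P,<}$ is the homogeneous coordinate ring of the toric variety of $\cC(P,<)$ (cf.\ the reasoning preceding Proposition~\ref{toricvariety}), identifying the projective subvariety cut out by $I^{hl}_{P,<}$ with the toric variety. The main obstacle is the bijectivity of $\Phi$, and specifically the additivity of $\tau$ on a chain of ideal indicators; the termination of sorting, the easy inclusion, and the translation to the toric-variety statement are all routine once $\Phi$ is known to be bijective.
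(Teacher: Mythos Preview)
The paper does not actually prove this proposition --- it is stated with a citation to~\cite{HL}, Section~2, and no proof is given. Your argument is correct and self-contained; moreover, it is very much in the spirit of the techniques the paper develops later in the relative setting: the sorting procedure and its termination reappear in the proof of Lemma~\ref{triangulation} (there with the sum-of-squares monovariant~\eqref{squaresineq} rather than your lex argument), the bijectivity of $\Phi$ is the special case $<'=<$ of Corollary~\ref{inctuple}, and the passage from ``sorted monomials are distinct under $\psi$'' to ``the binomials generate the kernel'' is exactly the argument in the proofs of Theorem~\ref{quadratic} and Lemma~\ref{ashibi}. One small point worth making explicit: your claim that Stanley's transfer map $\tau$ is additive on $\sum_i\mathbf 1_{J_i}$ for a chain $J_1\subseteq\cdots\subseteq J_k$ is justified by the paper's observation (Subsection~\ref{monomialideal}) that $\phi_<$ is \emph{linear} on each simplex $\cO(P,\prec)$, together with the fact that any chain of ideals lies in such a simplex; since the formula $\phi_<(x)_p=x_p-\max_{q>p}x_q$ is positively homogeneous, the same linearity persists on $k\cO(P,\prec)$.
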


\subsection{The monomial ideal and triangulations}\label{monomialideal}

One can also associate a monomial ideal with the poset $(P,<)$ (or lattice $\cJ(P,<)$). This is the ideal $I^m_{P,<}\subset\bC[\cJ(P,<)]$ generated by the monomials $X_{J_1}X_{J_2}$ with $J_1$ and $J_2$ incomparable with respect to $\subset$. A simple property of this ideal is as follows (see~\cite[Proposition 5.3]{M} or~\cite{H,HL} where the stronger ASL property is proved).
\begin{proposition}\label{monomialinitial}
$I^m_{P,<}$ is an initial ideal of both $I^h_{P,<}$ and $I^{hl}_{P,<}$.
\end{proposition}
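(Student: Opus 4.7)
The plan is to exhibit, separately for $I^h_{P,<}$ and $I^{hl}_{P,<}$, a weight vector $w\in\bR^{\cJ(P,<)}$ under which every defining binomial has its ``incomparable'' monomial $X_{J_1}X_{J_2}$ as initial term. Since the initials of a generating set yield generators of the initial ideal, this will give $I^m_{P,<}\subseteq\initial_w I$ in each case, and a Hilbert-function argument will upgrade this to equality.

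For the Hibi ideal I would take $w_J=|J|^2$. Writing $a=|J_1|$, $b=|J_2|$, $c=|J_1\cap J_2|$ and using $|J_1\cup J_2|=a+b-c$, a direct expansion gives
\[
\bigl(|J_1\cup J_2|^2+|J_1\cap J_2|^2\bigr)-\bigl(|J_1|^2+|J_2|^2\bigr)=2(a-c)(b-c),
\]
which is strictly positive precisely because the incomparability of $J_1$ and $J_2$ forces $c<a$ and $c<b$. Hence $\initial_w d(J_1,J_2)=X_{J_1}X_{J_2}$ for every defining binomial and $I^m_{P,<}\subseteq\initial_w I^h_{P,<}$.

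For the Hibi--Li ideal the situation is subtler because $J_1*_{P,<}J_2$ can be strictly smaller than $J_1\cap J_2$, and the naive quadratic weight actually fails on small examples. I would instead take $w_J=M^{|J|}$ for some fixed real $M>2$. Incomparability of $J_1$ and $J_2$ forces $|J_1\cup J_2|\ge\max(|J_1|,|J_2|)+1$, whence
\[
M^{|J_1\cup J_2|}\ge M\cdot M^{\max(|J_1|,|J_2|)}>2M^{\max(|J_1|,|J_2|)}\ge M^{|J_1|}+M^{|J_2|};
\]
adding the nonnegative $M^{|J_1*_{P,<}J_2|}$ on the right preserves the inequality, so the initial term of each Hibi--Li binomial is again $X_{J_1}X_{J_2}$ and $I^m_{P,<}\subseteq\initial_w I^{hl}_{P,<}$.

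For the reverse inclusions I would pass to Hilbert series. By the flatness of Gr\"obner degenerations (Theorem~\ref{flatfamily}), $S/\initial_w I$ has the same Hilbert function as $S/I$. Both $S/I^h_{P,<}$ and $S/I^{hl}_{P,<}$ have Hilbert function equal to the Ehrhart series of $\cO(P,<)$ and $\cC(P,<)$ respectively, which coincide by Lemma~\ref{ehrhart}; on the other hand, $S/I^m_{P,<}$ is the Stanley--Reisner ring of the order complex of the poset $(\cJ(P,<),\subseteq)$, whose degree $d$ graded piece has a monomial basis indexed by multichains $J_1\subseteq\dots\subseteq J_d$ in $\cJ(P,<)$, yielding the same count by the proof of Lemma~\ref{ehrhart}. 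Since $I^m_{P,<}\subseteq\initial_w I$ and the two ideals cut out algebras with identical Hilbert functions, they must coincide. The main obstacle is the Hibi--Li case: one must find a weight robust enough to tolerate the discrepancy between $\cap$ and $*_{P,<}$, which is what forces an exponential rather than polynomial choice.
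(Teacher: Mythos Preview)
Your proof is correct. One small wording issue: the sentence ``the initials of a generating set yield generators of the initial ideal'' is false as a general principle (they generate only a subideal in general), but your argument actually only uses that these initial forms \emph{lie in} $\initial_w I$, which is true and is all you need for the inclusion $I^m_{P,<}\subseteq\initial_w I$.

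Your approach differs from the paper's. The paper does not prove Proposition~\ref{monomialinitial} directly, but in the more general setting of the relative Hibi ideal it uses the single weight $w_J=|P\backslash J|^2$, which handles both cases at once. The point is that $J_1*_{P,<}J_2\subseteq J_1\cap J_2$, so $|P\backslash(J_1*_{P,<}J_2)|\ge |P\backslash(J_1\cap J_2)|$; combined with the same quadratic identity you computed (applied to the complements), this yields
\[
|P\backslash J_1|^2+|P\backslash J_2|^2<|P\backslash(J_1\cup J_2)|^2+|P\backslash(J_1\cap J_2)|^2\le|P\backslash(J_1\cup J_2)|^2+|P\backslash(J_1*_{P,<}J_2)|^2
\]
for incomparable $J_1,J_2$. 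So a quadratic weight \emph{does} work for the Hibi--Li case, provided one uses complements rather than cardinalities: passing to complements makes the discrepancy between $\cap$ and $*_{P,<}$ push the inequality in the favourable direction. Your exponential weight $M^{|J|}$ is a perfectly valid workaround, but it obscures this structural reason and does not generalize as cleanly to the relative setting treated later in the paper, where the single weight $|P\backslash J|^2$ continues to serve.
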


Note that $I^m_{P,<}$ is spanned by monomials $X_{J_1}\dots X_{J_M}$ such that there is no maximal chain in $\cJ(P,<)$ containing all $J_i$. For a maximal chain $C\subset\cJ(P,<)$ let $I_C$ be the prime ideal generated by all $X_J$ with $J\notin C$. We see that $I^m_{P,<}$ is the intersection of all $I_C$. Let $M(P,<)\subset\bP(\bC^{\cJ(P,<)})$ denote the zero set of $I^m_{P,<}$, it is a Gr\"obner degeneration of both the Hibi and the Hibi-Li varieties in view of Proposition~\ref{monomialinitial}. We obtain
\begin{proposition}
The irreducible components of $M(P,<)$ are the subspaces $\bP(\bC^C)$ for all maximal chains $C\subset\cJ(P,<)$.
\end{proposition}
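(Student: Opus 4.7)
The plan is to derive the proposition as a direct consequence of the decomposition $I^m_{P,<} = \bigcap_C I_C$ (over all maximal chains $C\subset\cJ(P,<)$) that has just been established in the paragraph preceding the statement. Taking vanishing loci, this gives
\[
M(P,<)=V(I^m_{P,<})=\bigcup_{C}V(I_C)=\bigcup_C\bP(\bC^C),
\]
and each subspace $\bP(\bC^C)$ is visibly irreducible since it is a projective linear subspace (equivalently, $I_C$ is prime: it is generated by a subset of the variables, so $\bC[\cJ(P,<)]/I_C$ is a polynomial ring).

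It then remains to check that the decomposition is irredundant, i.e.\ no $\bP(\bC^C)$ is properly contained in another $\bP(\bC^{C'})$. Since the zero locus $\bP(\bC^C)$ is cut out by the vanishing of the variables $X_J$ with $J\notin C$, one has $\bP(\bC^C)\subseteq\bP(\bC^{C'})$ if and only if $C\subseteq C'$. I would then invoke the fact that in the finite distributive lattice $\cJ(P,<)$ every maximal chain has length $|P|+1$ (this holds in any modular, hence distributive, lattice by Jordan--Dedekind, and is recalled in the discussion preceding Proposition~\ref{sublattices}). Thus any two maximal chains have the same cardinality, so $C\subseteq C'$ forces $C=C'$. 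Consequently, the $\bP(\bC^C)$ are pairwise incomparable, so each is a maximal irreducible closed subset of $M(P,<)$, i.e.\ an irreducible component; and conversely every irreducible component of $M(P,<)$ must be one of the $\bP(\bC^C)$ since these already cover $M(P,<)$.

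The main content is therefore the identification $I^m_{P,<}=\bigcap_C I_C$, which has essentially been handled in the discussion before the statement (a monomial lies in $I^m_{P,<}$ precisely when its variables lie in no common chain, and lies in $I_C$ precisely when some variable is outside $C$). The only additional ingredient is the equi-cardinality of maximal chains in a finite distributive lattice, which guarantees irredundancy; no deeper obstacle is expected.
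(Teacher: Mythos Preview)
Your proposal is correct and follows the same approach as the paper: the paper treats the proposition as an immediate consequence (``We obtain'') of the decomposition $I^m_{P,<}=\bigcap_C I_C$ established in the preceding paragraph, and your argument simply spells out the standard passage from a prime decomposition of an ideal to the irreducible components of its zero set, together with the irredundancy check via equi-cardinality of maximal chains.
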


An observation due to~\cite{FL} is that this variety corresponds to the canonical triangulation of the order polytope. Indeed, consider a linearization $\prec$ of $<$. Then the order polytope $\cO(P,\prec)$ is a simplex which is contained in $\cO(P,<)$ with vertices $\one_J$, $J\in C_\prec$. Such simplices for all $\prec$ are seen to form a triangulation of $\cO(P,<)$ (see~\cite[Section 5]{stan}). 

Since the integer points of a simplex $\cO(P,\prec)$ are of the form $\one_J$ with $J\in\cJ(P,<)$, its toric variety can be naturally embedded into $\bP(\bC^{\cJ(P,<)})$ as zero set of the ideal generated by $X_J$ with $\one_J\notin \cO(P,\prec)$. However, this is precisely the ideal $I_{C_\prec}$ and the obtained subvarieties in $\bP(\bC^{\cJ(P,<)})$ are precisely the irreducible components of $M(P,<)$.

A similar observation can be made concerning the chain polytope. Consider the map $\phi_<$ from $\bR^P$ to itself with \[\phi_<(x)_p=x_p-\max_{q>p} x_q.\] By~\cite[Theorem 3.2]{stan} $\phi_<$ provides a continuous piecewise linear bijection from $\cO(P,<)$ to $\cC(P,<)$. One also sees that this map is linear on every simplex $\cO(P,\prec)$ and, therefore, $\phi_<(\cO(P,\prec))$ is also a simplex which we denote $\Delta_{\prec,<}$. Evidently, these simplices form a triangulation of $\cC(P,<)$. Note that, since $\phi_<(\one_J)=\one_{\max_> J}$ for $J\in\cJ(P,<)$, the vertices of $\Delta_{\prec,<}$ are the points $\one_{\max_< J}$ with $J\in\cJ(P,\prec)=C_\prec$.

The obtained triangulation of $\cC(P,<)$ can be said to correspond to $M(P,<)$ in the same way as the triangulation of $\cO(P,<)$. We will formalize and generalize this correspondence between subdivisions and semitoric varieties in the next two subsections.

\subsection{Regular subdivisions and secondary fans}\label{secondary}

The notions and results here are due to Gelfand, Kapranov and Zelevinsky, see~\cite[Chapter 7]{GKZ}. They, however, use the terms ``coherent subdivision'' and ``coherent triangulation'' rather than ``regular subdivision'' and ``regular triangulation'' that are commonly used today.

Consider a convex polytope $Q\subset\bR^n$ of dimension $n$ with set of vertices $V=\{v_1,\dots,v_k\}$ and a point $c=(c_1,\dots,c_k)\in\bR^V$. Let $S\subset \bR^n\times\bR$ be the union of rays $v_i\times\{x\le c_i\}$ for $i\in[1,k]$ and let $T$ be the convex hull of $S$. It is evident that $T$ is an $(n+1)$-dimensional convex polyhedron with $\rho(T)=Q$ where $\rho$ denotes the projection onto $\bR^n$. Furthermore, $T$ has two kinds of facets $F$: those with $\dim\rho(F)=n$ and those with $\dim\rho(F)=n-1$. Each of the former facets is bounded and is the convex hull of some subset of the points $v_k\times c_k$. Each of the latter facets is an unbounded convex hull of a union of rays in $S$. One could say that the bounded facets are the ones you see when you ``look at $T$ from above''. 

Let $F_1,\dots,F_m$ be the bounded facets of $T$ and denote $Q_i=\rho(F_i)$. The set $\{Q_1,\dots,Q_m\}$ is a polyhedral subdivision of $Q$ which we denote $\Theta_Q(c)$. This means that $\bigcup Q_i=Q$, all $\dim Q_i=n$ and the $Q_i$ together with all their faces form a polyhedral complex. A polyhedral subdivision of the form $\Theta_Q(c)$ is known as a \textbf{regular subdivision}. 

Note that the union of the bounded facets $F_i$ is the graph of a continuous convex piecewise linear function on $Q$ and that the $Q_i$ are the (maximal) domains of linearity of this function. In fact, this function is the pointwise minimum of all convex functions $f$ on $Q$ with $f(v_i)=c_i$.

Regular subdivisions of $Q$ are also parametrized by a polyhedral fan. For a regular subdivision $\Theta$ denote $C(\Theta)\subset\bR^V$ the set of $c$ such that $\Theta_Q(c)=\Theta$.
\begin{theorem}\label{secondaryfan}
\hfill
\begin{enumerate}[label=(\alph*)]
\item Every $C(\Theta)$ is a relatively open polyhedral cone, together these cones form a polyhedral fan with support $\bR^V$ known as the \textbf{secondary fan} of $Q$. 
\item $\overline{C(\Theta')}$ is a face of $\overline{C(\Theta)}$ if and only if $\Theta$ is a refinement of $\Theta'$.
\item $C(\Theta)$ is a maximal cone in the secondary fan if and only if $\Theta$ is a triangulation of $Q$ (such triangulations are known as \textbf{regular triangulations}).
\item $C(\{Q\})$ is the minimal cone in the secondary fan, it is a linear space of dimension $n+1$.
\end{enumerate}
\end{theorem}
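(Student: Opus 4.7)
The plan is to work throughout with the polyhedron $T\subset\bR^n\times\bR$ built from $c$ and the associated piecewise linear function $g_c:Q\to\bR$ whose graph is the union of the bounded facets of $T$; as recalled in the excerpt, $g_c$ is the pointwise minimum of all convex functions $f$ on $Q$ with $f(v_i)=c_i$, and $\Theta_Q(c)$ consists of the maximal domains of linearity of $g_c$. The key combinatorial bookkeeping device is this: a subset $I\subset[1,k]$ indexes the vertex set of some cell of $\Theta_Q(c)$ if and only if there exists an affine function $\ell:\bR^n\to\bR$ with $\ell(v_i)=c_i$ for $i\in I$ and $\ell(v_i)\le c_i$ for $i\notin I$. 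This translates the geometry of the lower envelope of $T$ into a purely linear-algebraic condition on $c$.

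For part (a), the combinatorial type of this system is encoded entirely by finitely many linear equalities and strict inequalities in $c$: fixing for each $I\subset[1,k]$ whether the above $\ell$ exists and, if so, whether any of the inequalities $\ell(v_i)\le c_i$ is strict, fixes $\Theta_Q(c)$. Hence each $C(\Theta)$ is the relative interior of a convex polyhedral cone (it is a cone because $g_{\lambda c}=\lambda g_c$ for $\lambda>0$). The cones exhaust $\bR^V$ tautologically, so the fan axioms reduce to the face-compatibility statement in part~(b).

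For part (b), suppose $c\in C(\Theta)$ and move $c$ to the boundary of $\overline{C(\Theta)}$ by letting one of the strict inequalities $\ell(v_i)<c_i$ become an equality. The new equality provides an additional affine support merging several adjacent cells of $\Theta$ into a larger cell, producing a coarser $\Theta'$ with $\overline{C(\Theta)}$ contained in a face of $\overline{C(\Theta')}$. Conversely, perturbing $c$ in a relative interior direction of $\overline{C(\Theta')}$ breaks non-essential equalities and refines $\Theta'$. Iterating this argument identifies the face lattice of each $\overline{C(\Theta)}$ with the poset of coarsenings of $\Theta$, which simultaneously establishes the fan structure in~(a) and the face characterization in~(b). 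This is the main technical step: the hard part is keeping uniform control over the existence of supporting affine functions as $c$ moves, and I would appeal to linear programming duality (or to the fact that the lower envelope depends continuously on $c$) to guarantee that no hidden refinement appears when a single equality is gained or lost.

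Parts (c) and (d) then follow formally. A cell of $\Theta_Q(c)$ is a simplex precisely when the vertices of the corresponding bounded facet of $T$ are affinely independent; full-dimensionality of $C(\Theta)$ means that none of the defining equalities on $c$ is essential, and one checks that these two conditions are equivalent, giving~(c). At the opposite extreme, $\Theta=\{Q\}$ means $g_c$ is itself affine on $Q$, which happens exactly when $c_i=\ell(v_i)$ for some affine $\ell$ on $\bR^n$; such functions form an $(n+1)$-dimensional linear subspace, and adding such an $\ell$ to any $c$ only shifts $T$ vertically and so fixes $\Theta_Q(c)$. Thus $C(\{Q\})$ is both a linear space of dimension $n+1$ and the lineality space shared by every cone in the fan, which is the content of~(d).
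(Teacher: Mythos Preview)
The paper does not give a proof of this theorem: it is stated in the preliminaries as a known result attributed to Gelfand--Kapranov--Zelevinsky, with a citation to \cite[Chapter~7]{GKZ}, and no argument is supplied. Your sketch follows the standard lower-envelope approach found there, so in spirit it matches the intended reference.

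That said, there is a genuine error in your treatment of part~(b). You write that moving $c$ from the interior of $C(\Theta)$ to its boundary produces a coarser $\Theta'$ ``with $\overline{C(\Theta)}$ contained in a face of $\overline{C(\Theta')}$''. This inclusion is reversed. Coarsening corresponds to passing to a \emph{smaller} cone: the boundary stratum of $\overline{C(\Theta)}$ that you land on is $\overline{C(\Theta')}$, so $\overline{C(\Theta')}$ is a face of $\overline{C(\Theta)}$, in agreement with the theorem statement and with parts~(c) and~(d) (triangulations give the maximal cones, the trivial subdivision $\{Q\}$ gives the lineality space). As you have it written, $C(\{Q\})$ would be the maximal cone rather than the minimal one.

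A smaller imprecision: your opening characterization, that $I\subset[1,k]$ is the vertex set of a cell of $\Theta_Q(c)$ iff some affine $\ell$ satisfies $\ell(v_i)=c_i$ for $i\in I$ and $\ell(v_i)\le c_i$ otherwise, does not quite pin down the vertex set. With non-strict inequalities this only says that $\{v_i\}_{i\in I}$ lies in some face of the subdivision; to get exactly the vertex set of a face you need the inequalities for $i\notin I$ to be strict (and for a maximal cell you additionally need $\mathrm{conv}\{v_i\}_{i\in I}$ to be full-dimensional). This distinction is exactly what drives the refinement/coarsening behaviour in~(b), so it is worth stating precisely.
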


\subsection{Zhu's theorem}

For a finite set of integer points $A\subset\bZ^n$ denote $S_A=\bC[\{X_a\}_{a\in A}]$. We have the corresponding toric ideal $I_A\subset S_A$ which is the kernel of the map from $S_A$ to $\bC[z_1^{\pm1},\dots,z_n^{\pm1},t]$ taking $X_a$ to $tz^a$. For a subset $B\subset A$ let $I_{B,A}\subset S_A$ denote the ideal generated by all $X_a$ with $a\notin B$ and by $I_B\subset S_B$ (note that $S_B\subset S_A$). Evidently, the zero set of $I_{B,A}$ in $\bP(\bC^A)$ coincides with the zero set of $I_B$ in $\bP(\bC^B)\subset\bP(\bC^A)$. Let us write $V(Q)$ to denote the vertex set of a lattice polytope $Q$, then Zhu's theorem is as follows.
\begin{theorem}[\cite{Zh}, Theorem 3]\label{zhu}
For a lattice polytope $Q\subset\bR^n$ and $w\in\bR^{V(Q)}$ let $Q_1,\dots,Q_m$ be the parts of the subdivision $\Theta_Q(w)$. Then, in the ring $S_{V(Q)}$, the ideal $\bigcap_{i=1}^m I_{V(Q_i),V(Q)}$ is the radical of the ideal $\initial_w I_{V(Q)}$.
\end{theorem}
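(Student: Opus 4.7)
The plan is to establish $\initial_w I_{V(Q)} \subseteq I_{V(Q_i),V(Q)}$ for each $i$ and then deduce the full equality of radicals via a degree comparison. Each $I_{V(Q_i),V(Q)}$ is prime because $S_{V(Q)}/I_{V(Q_i),V(Q)} \cong S_{V(Q_i)}/I_{V(Q_i)}$ is the homogeneous coordinate ring of the irreducible toric variety of $Q_i$, so $\bigcap_i I_{V(Q_i),V(Q)}$ is already radical and the first step will automatically deliver $\sqrt{\initial_w I_{V(Q)}}\subseteq\bigcap_i I_{V(Q_i),V(Q)}$.

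For the first step the key input is the structure of the regular subdivision. On each cell $Q_i$ the piecewise-linear function supporting the bounded facets of $T$ agrees with an affine function $f_i(x)=\langle\alpha_i,x\rangle+\beta_i$ with $w_v=f_i(v)$ for $v\in V(Q_i)$ and, crucially, $w_v<f_i(v)$ strictly for every $v\in V(Q)\setminus V(Q_i)$: otherwise the tip $(v,w_v)$ would be a vertex of $T$ lying on the supporting hyperplane above $Q_i$, forcing that facet to project onto a cell strictly bigger than $Q_i$ and contradicting the choice of subdivision. Next, for every binomial generator $X^a-X^b\in I_{V(Q)}$ one has $\sum a_v f_i(v)=\sum b_v f_i(v)$, so the one-parameter subgroup $t\cdot X_v=t^{f_i(v)}X_v$ preserves $I_{V(Q)}$ and consequently $\initial_w I_{V(Q)}=\initial_{w-f_i|_{V(Q)}}I_{V(Q)}$. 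The shifted weight $w_v-f_i(v)$ vanishes exactly on $V(Q_i)$ and is strictly negative elsewhere, so a direct case analysis on the supports of $a,b$ shows that $\initial_{w-f_i}(X^a-X^b)$ is either a binomial with both terms supported on $V(Q_i)$ (hence in $I_{V(Q_i)}$) or carries at least one monomial with a variable $X_v$ for some $v\notin V(Q_i)$ (hence in the monomial ideal $(X_v:v\notin V(Q_i))$); in either case it lies in $I_{V(Q_i),V(Q)}$. To pass from binomial generators to all of $\initial_w I_{V(Q)}$ one invokes the classical fact that the toric ideal $I_{V(Q)}$ admits a Gr\"obner basis of binomials with respect to any monomial order refining $w$, together with the standard fact that $\initial_w I_{V(Q)}$ is generated by the $w$-initial forms of such a basis.

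The reverse inclusion follows from a degree argument. By Theorem~\ref{flatfamily} the Gr\"obner degeneration is flat, so the projective scheme cut out by $\initial_w I_{V(Q)}$ in $\bP(\bC^{V(Q)})$ has the same dimension $n=\dim Q$ and the same degree $n!\,\mathrm{vol}(Q)$ as the toric variety cut out by $I_{V(Q)}$. Since the $Q_i$ subdivide $Q$ and the toric variety of $Q_i$, embedded in $\bP(\bC^{V(Q)})$ via $V(Q_i)\hookrightarrow V(Q)$, has degree $n!\,\mathrm{vol}(Q_i)$, this common degree equals $\sum_i n!\,\mathrm{vol}(Q_i)$. The first step provides the set-theoretic containment of the union of these $m$ toric varieties inside the zero set of $\initial_w I_{V(Q)}$, and each of them is irreducible of dimension $n$, hence a top-dimensional irreducible component of that zero set; since their degrees already total $n!\,\mathrm{vol}(Q)$, they exhaust the top-dimensional components with multiplicity one. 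The minimal primes of $\initial_w I_{V(Q)}$ are therefore exactly $I_{V(Q_1),V(Q)},\ldots,I_{V(Q_m),V(Q)}$, and their intersection gives $\sqrt{\initial_w I_{V(Q)}}$. I expect the main technical obstacle to be the case analysis of the first step, where everything hinges on the strict inequality $w_v<f_i(v)$ on $V(Q)\setminus V(Q_i)$ and on the standard but essential passage from binomial generators to the whole initial ideal via a binomial Gr\"obner basis of the toric ideal.
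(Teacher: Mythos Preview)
The paper does not give its own proof of this statement; it is quoted from~\cite{Zh} and used as a black box. I therefore evaluate your argument on its own merits.

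Step~1 is correct. The strict inequality $w_v<f_i(v)$ for $v\in V(Q)\setminus V(Q_i)$ holds because such a $v$ cannot lie in $Q_i$ (an extreme point of $Q$ contained in $Q_i\subseteq Q$ is automatically extreme in $Q_i$), and then maximality of $Q_i$ as a linearity domain forces the affine function $f_i$ to strictly dominate the height function, hence $w_v$, at $v$. The affine shift, the case analysis on binomials, and the reduction to a binomial Gr\"obner basis are all standard and correctly invoked.

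Step~2 has a gap. Your degree count correctly identifies the top-dimensional irreducible components of $V(\initial_w I_{V(Q)})$ as exactly the toric varieties of the $Q_i$, each appearing with multiplicity one. But from this you conclude that the $I_{V(Q_i),V(Q)}$ are \emph{all} the minimal primes of $\initial_w I_{V(Q)}$, and that does not follow: a lower-dimensional irreducible component would contribute nothing to the degree and would be invisible to your argument, yet its prime would appear in the intersection defining $\sqrt{\initial_w I_{V(Q)}}$ and make it strictly smaller than $\bigcap_i I_{V(Q_i),V(Q)}$. What is missing is the equidimensionality of $\initial_w I_{V(Q)}$. This does hold, and the quickest route is via the flat family of Theorem~\ref{flatfamily}: since $t$ is a non-zero-divisor on $\tilde S$ and $\tilde S[t^{-1}]\cong (S/I_{V(Q)})[t,t^{-1}]$ is a domain, $\tilde S$ itself is a domain; Krull's Hauptidealsatz then forces every minimal prime over $(t)$ in $\tilde S$ to have height one, so every irreducible component of the special fibre $S/\initial_w I_{V(Q)}$ has dimension equal to $\dim(S/I_{V(Q)})$. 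With this one additional input your proof goes through.
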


A simple consequence of this result is as follows.
\begin{cor}\label{conescoincide0}
In the setting of the Theorem~\ref{zhu}, if $\initial_w I_{V(Q)}$ is radical, then the cones $C(I_{V(Q)},\initial_w I_{V(Q)})$ and $C(\Theta_Q(w))$ in $\bR^{V(Q)}$ coincide.
\end{cor}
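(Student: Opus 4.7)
The plan is to prove equality of the two cones by establishing both containments, using Theorem~\ref{zhu} to translate between the initial ideal at a point and the subdivision it induces; set $\{Q_i\}=\Theta_Q(w)$ throughout.

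For $C(I_{V(Q)},\initial_w I_{V(Q)})\subseteq C(\Theta_Q(w))$, let $w'$ satisfy $\initial_{w'} I_{V(Q)}=\initial_w I_{V(Q)}$ and set $\{Q_i'\}=\Theta_Q(w')$. Applying Theorem~\ref{zhu} at both $w$ and $w'$ and using the radicality hypothesis yields
\[\bigcap_i I_{V(Q_i'),V(Q)}=\initial_{w'} I_{V(Q)}=\initial_w I_{V(Q)}=\bigcap_i I_{V(Q_i),V(Q)}.\]
Every part of a polyhedral subdivision of $Q$ has dimension $n=\dim Q$, so each $I_{V(Q_i),V(Q)}$ and $I_{V(Q_i'),V(Q)}$ is prime (being obtained by adjoining variables to the toric prime in $S_{V(Q_i)}$ or $S_{V(Q_i')}$) and cuts out an irreducible projective variety of dimension $n$ in $\bP(\bC^{V(Q)})$. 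Distinct primes of the same dimension are incomparable, so these ideals are precisely the minimal primes of the displayed intersection; moreover each vertex set is recovered as $V(Q_i)=\{a\in V(Q):X_a\notin I_{V(Q_i),V(Q)}\}$ (since the toric ideal $I_{V(Q_i)}$ contains no variable $X_a$ with $a\in V(Q_i)$), and then $Q_i=\mathrm{conv}(V(Q_i))$. Hence $\{Q_i\}=\{Q_i'\}$ and $w'\in C(\Theta_Q(w))$.

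For the reverse inclusion, take $w'\in C(\Theta_Q(w))$. Theorem~\ref{zhu} applied at $w'$, combined with the radicality of $\initial_w I_{V(Q)}$ and Theorem~\ref{zhu} at $w$, gives
\[\sqrt{\initial_{w'} I_{V(Q)}}=\bigcap_i I_{V(Q_i),V(Q)}=\initial_w I_{V(Q)},\]
so $\initial_{w'} I_{V(Q)}\subseteq\initial_w I_{V(Q)}$. Since Gröbner degenerations preserve Hilbert functions (Theorem~\ref{flatfamily}), the homogeneous quotients $S_{V(Q)}/\initial_w I_{V(Q)}$ and $S_{V(Q)}/\initial_{w'} I_{V(Q)}$ have identical Hilbert functions, forcing the inclusion of graded ideals to be an equality. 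Therefore $w'\in C(I_{V(Q)},\initial_w I_{V(Q)})$. The main subtlety is the combinatorial reconstruction of the subdivision $\{Q_i\}$ from the intersection of primes in the first direction; once this is in place, the rest is a formal consequence of Zhu's theorem, the radicality assumption, and invariance of the Hilbert function under Gröbner degeneration.
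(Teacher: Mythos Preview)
Your proof is correct. The direction $C(\Theta_Q(w))\subseteq C(I_{V(Q)},\initial_w I_{V(Q)})$ is handled exactly as in the paper, via the containment $\initial_{w'} I_{V(Q)}\subseteq\sqrt{\initial_{w'} I_{V(Q)}}=\initial_w I_{V(Q)}$ followed by the Hilbert-series equality. The other direction, however, is argued differently: the paper simply invokes \cite[Theorem~8.15]{S}, asserting that the Gr\"obner fan of $I_{V(Q)}$ refines the secondary fan of $Q$, so that any Gr\"obner cone lies inside a single secondary cone. You instead reconstruct the subdivision from the ideal by identifying the $I_{V(Q_i),V(Q)}$ as the minimal primes of the radical intersection and then recovering $Q_i=\mathrm{conv}\{a:X_a\notin I_{V(Q_i),V(Q)}\}$. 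This is precisely the kind of argument the paper alludes to parenthetically when it remarks that the cited refinement result ``can, in fact, also be easily derived from Zhu's theorem.'' Your route is therefore more self-contained, at the cost of spelling out the minimal-prime bookkeeping; the paper's route is terser but leans on an external reference.
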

\begin{proof}
Again, let $Q_1,\dots,Q_m$ be the parts of the subdivision $\Theta_Q(w)$ and denote $I=\bigcap_{i=1}^m I_{V(Q_i),V(Q)}$. Zhu's theorem shows that $I=\initial_w I_{V(Q)}$ and that $C(\Theta_Q(w))$ lies in the union of all $C(I_{V(Q)},I')$ with $\sqrt{I'}=I$. However, for an initial ideal $I'$ of $I_{V(Q)}$, if $I'\neq I$ and $\sqrt{I'}=I=\initial_w I_{V(Q)}$, then $I'\subsetneq I$, which is impossible since both have the same Hilbert series as $I_{V(Q)}$ (we always consider Hilbert series with respect to the grading by degree). We see that $C(\Theta_Q(w))\subset C(I_{V(Q)},I)$, the rest follows from~\cite[Theorem 8.15]{S} by which the Gr\"obner fan of $I_{V(Q)}$ refines the secondary fan of $Q$ (this can, in fact, also be easily derived from Zhu's theorem).
\end{proof}

We immediately have the following application which we will generalize later (see Propositions~\ref{groebnercone} and~\ref{secondarycone}). Here recall that the vertex sets of each of $\cO(P,<)$ and $\cC(P,<)$ are in bijection with $\cJ(P,<)$ which allows us to identify both $\bR^{V(\cO(P,<))}$ and $\bR^{V(\cC(P,<))}$ with $\bR^{\cJ(P,<)}$ and both of $S_{V(\cO(P,<))}$ and $S_{V(\cC(P,<))}$ with $\bC[\cJ(P,<)]$.
\begin{proposition}\label{conescoincide1}
For a finite poset $(P,<)$ we have the following equalities of cones in $\bR^{\cJ(P,<)}$: \[C(I^h_{P,<},I^m_{P,<})=C(\{\cO(P,\prec),\prec\text{ is a linearization of }<\})\] and \[C(I^{hl}_{P,<},I^m_{P,<})=C(\{\Delta_{\prec,<},\prec\text{ is a linearization of }<\}).\] In particular, the triangulations in the right-hand sides are regular.
\end{proposition}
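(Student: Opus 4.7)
The plan is to derive both equalities directly from Corollary~\ref{conescoincide0}. For the first equality, observe that by Proposition~\ref{toricvariety} the Hibi ideal $I^h_{P,<}$ coincides with the toric ideal $I_{V(\cO(P,<))}$ under the identification $\bC[\cJ(P,<)] = S_{V(\cO(P,<))}$, and by Proposition~\ref{monomialinitial} the ideal $I^m_{P,<}$ is an initial ideal of $I^h_{P,<}$. Since $I^m_{P,<}$ is generated by squarefree monomials it is radical, so Corollary~\ref{conescoincide0} yields
\[
C(I^h_{P,<},I^m_{P,<}) = C(\Theta_{\cO(P,<)}(w))
\]
for any $w$ in the left-hand cone. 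It remains to identify the subdivision $\Theta_{\cO(P,<)}(w)$. Applying Zhu's theorem to this $w$ and using that a squarefree monomial ideal is the intersection of its minimal primes, the parts $Q_i$ of $\Theta_{\cO(P,<)}(w)$ must have vertex sets matching the minimal primes of $I^m_{P,<}$. As noted in Subsection~\ref{monomialideal}, $I^m_{P,<} = \bigcap_C I_C$ with $C$ running over maximal chains in $\cJ(P,<)$, and maximal chains are precisely the $C_\prec$ for linearizations $\prec$ of $<$. Thus each $Q_i$ is the convex hull of $\{\mathbf 1_J : J\in C_\prec\}$ for some linearization $\prec$, which is exactly the simplex $\cO(P,\prec)$.

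For the second equality I repeat the same argument with $\cC(P,<)$ in place of $\cO(P,<)$: by the Hibi--Li analogue of Proposition~\ref{toricvariety}, $I^{hl}_{P,<}$ is identified with the toric ideal $I_{V(\cC(P,<))}$ via the bijection $J \mapsto \mathbf 1_{\max_< J}$ between $\cJ(P,<)$ and the vertex set of $\cC(P,<)$; again $I^m_{P,<}$ is a radical initial ideal (Proposition~\ref{monomialinitial}), so Corollary~\ref{conescoincide0} gives $C(I^{hl}_{P,<},I^m_{P,<}) = C(\Theta_{\cC(P,<)}(w))$. Zhu's theorem then identifies the parts of $\Theta_{\cC(P,<)}(w)$ with the convex hulls of $\{\mathbf 1_{\max_< J} : J\in C_\prec\}$, which by definition are the simplices $\Delta_{\prec,<}$.

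Finally, the ``in particular'' clause is automatic: both collections on the right-hand side are displayed as regular subdivisions of the form $\Theta_Q(w)$, and since Subsection~\ref{monomialideal} already exhibits each $\cO(P,\prec)$ and each $\Delta_{\prec,<}$ as a full-dimensional simplex, the subdivisions are in fact regular triangulations. The only step requiring care is the matching of minimal primes of $I^m_{P,<}$ with the vertex sets produced by Zhu's theorem, but this is a direct application of the Stanley--Reisner description of $I^m_{P,<}$ as $\bigcap_C I_C$ combined with the bijection between maximal chains of $\cJ(P,<)$ and linearizations of $<$ established earlier.
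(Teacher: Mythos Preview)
Your proof is correct and follows essentially the same approach as the paper: identify $I^h_{P,<}$ (resp.\ $I^{hl}_{P,<}$) with the toric ideal of the vertex set of $\cO(P,<)$ (resp.\ $\cC(P,<)$), apply Corollary~\ref{conescoincide0} using that $I^m_{P,<}$ is radical, and then match the parts of the resulting regular subdivision with the simplices $\cO(P,\prec)$ (resp.\ $\Delta_{\prec,<}$) via the prime decomposition $I^m_{P,<}=\bigcap_\prec I_{C_\prec}$. The paper's proof is slightly more explicit in noting that $\cO(P,\prec)$ is the \emph{unique} lattice polytope in $\cO(P,<)$ with $I_{V(Q),V(\cO(P,<))}=I_{C_\prec}$, but your argument reaches the same conclusion.
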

\begin{proof}
Note that $I_{V(\cO(P,<))}=I^h_{P,<}$. Since the ideal $I^m_{P,<}$ is radical, the cone $C(I^h_{P,<},I^m_{P,<})$ must coincide with some cone $C(\{Q_1,\dots,Q_m\})$ in the secondary fan of $\cO(P,<)$ by Corollary~\ref{conescoincide0}. By Theorem~\ref{zhu} the (necessarily prime) ideals $I_{V(Q_i),V(\cO(P,<))}$ are the prime components of $I^m_{P,<}$. However, we have seen that the prime components of $I^m_{P,<}$ are the ideals $I_C$ with $C$ ranging over all maximal chains in $\cJ(P,<)$. For a linearization $\prec$ of $<$ the only lattice polytope $Q\subset \cO(P,<)$ with $I_{V(Q),V(\cO(P,<))}=I_{C_\prec}$ is $\cO(P,\prec)$. This shows that $\{Q_1,\dots,Q_m\}$ is precisely the triangulation into the simplices $\cO(P,\prec)$. The second identity is proved similarly.
\end{proof}

\subsection{Multiprojective varieties}

Consider positive integers $n$ and $d_1,\dots,d_n$, denote \[\bP=\bP(\bC^{d_1})\times\dots\times\bP(\bC^{d_n}).\] Consider variables $X_i^j$ with $j\in[1,n]$ and $i\in[1,d_j]$ and denote $S=\bC[\{X_i^j\}]$. The ring $S$ is the \textbf{multihomogeneous coordinate ring} of $\bP$, zero sets of multihomogeneous ideals in $S$ (ideals separately homogeneous in each group of variables $X_1^j,\dots,X_{d_j}^j$) are subvarieties in $\bP$. 

We will use the following ``multiprojective Nullstellensatz'' characterizing the vanishing ideals of subvarieties in $\bP$. Recall that for ideals $I,J\subset S$ one says that $I$ is saturated with respect to $J$ if $(I:J)=I$ where $(I:J)$ is the ideal $\{s\in S| sJ\subset I\}$. For a multihomogeneous ideal $I\subset S$ let $\mathcal V(I)\subset\bP$ denote its zero set.
\begin{theorem}\label{hilbert}
A multihomogeneous ideal $I\subset S$ is the vanishing ideal of $\mathcal V(I)$ if and only if $I$ is radical and is saturated with respect to each of the ``irrelevant'' ideals $\mathfrak M_j=\langle X_1^j,\dots,X_{d_j}^j\rangle$ (or, equivalently, with respect to $\mathfrak M_1\dots\mathfrak M_n$).
\end{theorem}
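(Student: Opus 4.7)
The plan is to reduce the statement to Hilbert's affine Nullstellensatz by realizing $\mathcal V(I)$ as a quotient of an affine variety by the natural $(\bC^*)^n$-scaling. Let $V(I)\subset\mathbb A^{d_1+\dots+d_n}$ be the affine zero set of $I$ and write $Z=Z_1\cup\cdots\cup Z_n$, where $Z_j$ is the affine locus cut out by $\mathfrak M_j$ (the $j$-th coordinate block being identically zero). Since every $f\in S$ of interest is multihomogeneous, $f$ vanishes on $\mathcal V(I)$ if and only if it vanishes on $V(I)\setminus Z$, so the task reduces to computing the ideal of polynomials vanishing on $\overline{V(I)\setminus Z}$.

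For the forward implication, $I=I(\mathcal V(I))$ is automatically radical. To check saturation with respect to $\mathfrak M_j$, I would suppose $fX_i^j\in I$ for every $i\in[1,d_j]$; then for any $p\in\mathcal V(I)$ with affine lift $\tilde p$, the $j$-th block of $\tilde p$ is nonzero, so some $\tilde p_i^j\neq 0$, and the equalities $f(\tilde p)\tilde p_i^j=0$ force $f(\tilde p)=0$. Thus $f\in I(\mathcal V(I))=I$.

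For the reverse implication I would first record an elementary lemma: if $(I:\mathfrak M_j)=I$ for every $j$, then also $(I:(\mathfrak M_1\cdots\mathfrak M_n)^\infty)=I$. This is proved by stripping off the generators $X_{i_1}^1\cdots X_{i_n}^n$ of $\mathfrak M_1\cdots\mathfrak M_n$ one block at a time while applying the single-ideal saturation hypothesis, then extending to arbitrary powers by an immediate induction. Now, given a multihomogeneous $f$ vanishing on $\mathcal V(I)$, the reduction above together with Hilbert's Nullstellensatz yields $f\in I(\overline{V(I)\setminus Z})$. Decomposing $I=\bigcap_iP_i$ into minimal primes (possible because $I$ is radical), the closure $\overline{V(I)\setminus Z}$ is the union of those components $V(P_i)$ with $V(P_i)\not\subset Z$, which by primality of $P_i$ means $P_i$ fails to contain any $\mathfrak M_j$. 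A standard saturation-versus-primary-decomposition computation then identifies $\bigcap\{P_i:P_i\not\supseteq\mathfrak M_j\text{ for every }j\}$ with $(I:(\mathfrak M_1\cdots\mathfrak M_n)^\infty)$, and by the lemma this equals $I$, so $f\in I$.

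The only step that requires real care is the elementary lemma combining the single-ideal saturations into saturation with respect to the product and its powers; everything else amounts to the standard dictionary between affine and multiprojective vanishing. The degenerate case $\mathcal V(I)=\emptyset$ needs a quick sanity check: then every minimal prime of $I$ contains some $\mathfrak M_j$, and the saturation hypothesis collapses $I$ to the unit ideal $S=I(\emptyset)$, which is consistent with the statement.
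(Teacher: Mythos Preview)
Your proof is correct and follows essentially the same route as the paper: pass to the affine cone, observe that vanishing on $\mathcal V(I)$ means vanishing on $V(I)\setminus Z$, and use the affine Nullstellensatz together with the identification of $I(\overline{V(I)\setminus Z})$ with a saturation. The paper compresses all of this into two sentences, citing the general fact that the vanishing ideal of a set-theoretic difference of affine varieties is the saturation $(I_X:I_Y^\infty)$; you instead unpack that fact via the minimal prime decomposition of the radical ideal $I$, and you supply explicitly both the forward direction and the elementary lemma passing from saturation with respect to each $\mathfrak M_j$ to saturation with respect to the product---points the paper leaves implicit in the parenthetical ``or, equivalently''.
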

\begin{proof}
Let $V_{\mathrm{aff}}(I)$ denote the zero set of $I$ in the affine space $\mathbb A=\bigoplus_j \bC^{d_j}$. Denote $\mathbb A_j=V_{\mathrm{aff}}(\mathfrak M_j)$, this is a coordinate subspace. For a multihomogeneous ideal $J\subset S$ we have $\mathcal V(I)=\mathcal V(J)$ if and only if \[V_{\mathrm{aff}}(I)\backslash(\mathbb A_1\cup\dots\cup \mathbb A_n)=V_{\mathrm{aff}}(J)\backslash(\mathbb A_1\cup\dots\cup\mathbb A_n).\] Therefore, $I$ is the vanishing ideal of $\mathcal V(I)$ if and only if $V_{\mathrm{aff}}(I)$ is precisely the closure $\overline{V_{\mathrm{aff}}(I)\backslash(\mathbb A_1\cup\dots\cup \mathbb A_n)}$. The claim now follows from the classical Nullstellensatz and the fact that the vanishing ideal of the difference of two affine subvarieties is the saturation of their vanishing ideals.
\end{proof}

\begin{remark}
Somewhat surprisingly, the authors were unable to find this theorem in the literature, see~\cite{MO}.
\end{remark}

We proceed to prove a useful fact concerning multiprojective realizations of toric varieties. Consider $n$ lattice polytopes $P_1,\dots,P_n\subset\bR^m$. We assume that the Minkowski sum $P=P_1+\dots+P_n$ is normal and, furthermore, that 
\begin{equation}\label{minkowski}
P\cap\bZ^m=P_1\cap\bZ^m+\dots+P_n\cap\bZ^m,
\end{equation}
i.e.\ every integer point in $P$ decomposes into a sum of integer points in the $P_j$.

Denote the integer points $P_j\cap\bZ^m=\{a_1^j,\dots,a_{d_j}^j\}$. We again set $S=\bC[\{X_i^j\}]$ where $j\in[1,n]$, $i\in[1,d_j]$. Denote $R=\bC[z_1^{\pm1},\dots,z_m^{\pm1},t_1,\dots,t_n]$. We define a homomorphism $\varphi:S\to R$ by $\varphi(X^j_i)=t_j z^{a^j_i}$ and denote $I=\ker\varphi$. We also define the product $\bP$ as above and again view $S$ as its multihomogeneous coordinate ring.
\begin{lemma}\label{multiproj}
$I$ is multihomogeneous and $\mathcal V(I)$ is isomorphic to the toric variety of $P$. Moreover, $I$ is the vanishing ideal of $\mathcal V(I)$.
\end{lemma}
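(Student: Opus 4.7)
The plan is to prove the three assertions in turn, combining a natural $\bZ^n$-grading with a direct realization of $\mathcal V(I)$ as the closure of a monomial image and the Segre embedding.

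For multihomogeneity, I would equip $S$ with the $\bZ^n$-grading in which $\deg X_i^j=e_j$ and $R$ with the grading in which $\deg t_j=e_j$ and $\deg z_k=0$. Then $\varphi$ is $\bZ^n$-graded, so $I=\ker\varphi$ is a multihomogeneous ideal.

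To identify $\mathcal V(I)$, I would introduce the monomial map
\[\Phi\colon(\bC^*)^m\to\bP,\qquad z\mapsto\bigl([z^{a_1^1}:\dots:z^{a_{d_1}^1}],\dots,[z^{a_1^n}:\dots:z^{a_{d_n}^n}]\bigr).\]
For a multihomogeneous $F\in S$ of multidegree $\bd=(d_1,\dots,d_n)$ one has $\varphi(F)=\prod_j t_j^{d_j}\cdot F(z^{a_i^j})$, so $F\in I$ iff $F$ vanishes on $\Phi((\bC^*)^m)$. Hence $\mathcal V(I)$ coincides with the closure $Y=\overline{\Phi((\bC^*)^m)}\subset\bP$. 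To see that $Y$ is the toric variety of $P$, compose $\Phi$ with the Segre embedding $\sigma\colon\bP\hookrightarrow\bP(\bC^{d_1\cdots d_n})$; the resulting map sends $z$ to the tuple $\bigl(z^{a_{i_1}^1+\cdots+a_{i_n}^n}\bigr)_{(i_1,\dots,i_n)}$. The Minkowski condition \eqref{minkowski} guarantees that $(i_1,\dots,i_n)\mapsto a_{i_1}^1+\cdots+a_{i_n}^n$ surjects onto $P\cap\bZ^m$, so $\sigma\circ\Phi$ lands inside the linear subspace $L\subset\bC^{d_1\cdots d_n}$ on which coordinates are constant along the fibers of this map. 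The natural identification $\bP(L)\cong\bP(\bC^{P\cap\bZ^m})$ then sends $\sigma(Y)$ to the closure of $z\mapsto(z^a)_{a\in P\cap\bZ^m}$, which by normality of $P$ is precisely the standard projective embedding of the toric variety of $P$ by its integer points.

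For the final claim, I would appeal to Theorem~\ref{hilbert}. The ring $R$ is a localization of a polynomial ring, hence an integral domain, so $I=\ker\varphi$ is prime and thus radical. For saturation with respect to each irrelevant ideal $\mathfrak M_j=\langle X_1^j,\dots,X_{d_j}^j\rangle$: if $f\mathfrak M_j\subset I$ then $\varphi(f)\varphi(X_1^j)=0$ in $R$, and since $\varphi(X_1^j)=t_j z^{a_1^j}$ is a nonzero monomial, hence a nonzerodivisor, in $R$, we conclude $\varphi(f)=0$ and $f\in I$. The main difficulty will be executing the Segre step cleanly: one must verify that under the identification $\bP(L)\cong\bP(\bC^{P\cap\bZ^m})$ the subvariety $\sigma(Y)$ becomes the standard integer-point embedding of the toric variety of $P$. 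Here the normality of $P$ is exactly what ensures that this integer-point embedding describes the toric variety projectively, while the Minkowski decomposition is what forces $\sigma(Y)$ to lie in $\bP(L)$ in the first place.
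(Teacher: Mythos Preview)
Your proposal is correct and follows essentially the same approach as the paper: realize $\mathcal V(I)$ as the closure of a monomial parametrization, compare it with the standard realization of the toric variety of $P$ via the Segre embedding (the paper phrases this as an explicit linear embedding $\iota:\bP(\bC^d)\hookrightarrow\mathbb U$ whose image is exactly your subspace $\bP(L)$), and then verify radicality and saturation just as you do. The only cosmetic difference is that the paper defines $\iota$ directly rather than first observing that $\sigma\circ\Phi$ lands in a linear subspace.
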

\begin{proof}
The multihomogeneity is immediate from the definition. Let $P\cap\bZ^m=\{b_1,\dots,b_d\}$. The toric variety of $P$ can be realized in $\bP(\bC^d)$ as the closure $X_P$ of the set of points of the form 
\begin{equation}\label{X_P}
(x^{b_1}:\dots:x^{b_d})
\end{equation}
where $x\in(\bC^*)^m$. Meanwhile, $\mathcal V(I)$ is the closure of the set of points of the form 
\begin{equation}\label{V(I)}
(x^{a_1^1}:\dots:x^{a_{d_1}^1})\times\dots\times(x^{a_1^m}:\dots:x^{a_{d_m}^m})\in\bP
\end{equation}
where $x\in (\bC^*)^m$, this follows from the definition of $\varphi$. 

Now consider the space $\mathbb U=\bP(\bC^{d_1\dots d_m})$, its homogeneous coordinates are enumerated by tuples $(i_1,\dots,i_m)$ with $i_k\in [1,d_k]$. We have an embedding $\iota:\bP(\bC^d)\hookrightarrow\mathbb U$ given in homogeneous coordinates by \[\iota(y_1:\dots:y_d)_{i_1,\dots,i_m}=y_j\] where $a_{i_1}^1+\dots+a_{i_m}^m=b_j$. This map is injective due to~\eqref{minkowski}. We also have the Segre embedding $\sigma:\bP\hookrightarrow\mathbb U$. The image of~\eqref{X_P} under $\iota$ coincides with the image of~\eqref{V(I)} under $\sigma$, i.e.\ $\iota(X_P)=\sigma(\mathcal V(I))$.

To verify the last claim we use Theorem~\ref{hilbert}. $I$ is radical since it is prime. It is also evident that for $s\in S$ and any $X^j_i$ we have $s X^j_i\in I$ if and only if $s\in I$ which implies saturatedness with respect to the $\mathfrak M_j$.
\end{proof}

\section{Relative poset polytopes}\label{nonmarked}

\subsection{Combinatorial properties}

Choose two finite posets $(P,<)$ and $(P,<')$ with the same underlying set such that $<'$ is weaker than $<$. Recall that the lattice $\cJ=\cJ(P,<)$ is naturally embedded into $\cJ'=\cJ(P,<')$ as a sublattice. We will assume that the sublattice $\cJ\subset\cJ'$ is closed under $*'=*_{P,<'}$: for $J_1,J_2\in\cJ$ we have $J_1*'J_2\in\cJ$.

We can now give the main definition. 
\begin{definition}
The \textit{relative poset polytope} $\cR(P,<,<')$ is the convex hull of the points $\mathbf 1_{\max_{<'}J}\in\bR^P$ for all $J\in\cJ$. 
\end{definition}
We immediately note that all of these points are pairwise distinct and are vertices of the defined polytope.
\begin{proposition}\label{rppverts}
$\cR(P,<,<')$ has $|\cJ|$ vertices: the points $\mathbf 1_{\max_{<'}J}\in\bR^P$,~${J\in\cJ}$.    
\end{proposition}

In other words, $\cR(P,<,<')$ is the convex hull of those vertices of $\cC(P,<')$ which correspond to ideals in $(P,<)$. We use the word ``relative'' to imply that it is a poset polytope for the poset $(P,<)$ defined in relation to the weaker order $<'$. Obviously, the definition makes sense regardless of $\cJ$ being closed under $*'$ but this assumption is necessary for $\cR(P,<,<')$ to have the nice properties proved here.

\begin{example}
If $<'=<$, then $\cJ$ is indeed closed under $*'=*_{P,<}$ and the vertices of $\cR(P,<,<')$ are the points $\mathbf 1_{\max_< J}$ with $J\in\cJ$, i.e.\ it is the chain polytope $\cC(P,<)$. On the other extreme, let $<'$ be the trivial order relation on $P$ for which no two elements are comparable. Then for any $J\in\cJ$ we have $\max_{<'}J=J$, hence $*'=\cap$ and $\cR(P,<,<')$ is the order polytope $\cO(P,<)$. Among all $<'$ weaker than $<$, the trivial one is the weakest while $<$ is the strongest, hence the $\cR(P,<,<')$ can be said to interpolate between $\cO(P,<)$ and $\cC(P,<)$. We also note that if $\prec$ is a linearization of $<'$, then $\cR(P,\prec,<')$ is precisely the simplex $\Delta_{\prec,<'}$ considered in Section~\ref{monomialideal}.
\end{example}

Below is, perhaps, the key property of $\cR(P,<,<')$ and the reason for us to require $\cJ$ to be closed under $*'$.
\begin{lemma}\label{triangulation}
$\cR(P,<,<')$ is the union of those simplices $\Delta_{\prec,<'}$ for which $\prec$ is a linearization of $<$.
\end{lemma}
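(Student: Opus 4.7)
The plan is to prove the two inclusions $\Delta_{\prec,<'}\subseteq\cR(P,<,<')$ (for every linearization $\prec$ of $<$) and $\cR(P,<,<')\subseteq\bigcup_\prec\Delta_{\prec,<'}$ separately. The first is immediate: since $\prec$ is stronger than $<$ (hence than $<'$), the chain $C_\prec=\cJ(P,\prec)$ is contained in $\cJ$, so every vertex $\mathbf 1_{\max_{<'}J}$ of $\Delta_{\prec,<'}$ appears among the defining vertices of $\cR(P,<,<')$, and convexity yields the inclusion.

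For the converse I would take $x\in\cR(P,<,<')$ and run a straightening algorithm. Since $\bigcup_\prec\Delta_{\prec,<'}$ is closed and $\cR(P,<,<')$ has integer vertices (so its rational points are dense), it suffices to treat rational $x$, for which the rational polytope of convex representations is nonempty; picking a rational one and clearing denominators, I obtain integers $m_J\in\bZ_{\ge 0}$ with $\sum_J m_J=N$ and $Nx=\sum_{J\in\cJ}m_J\mathbf 1_{\max_{<'}J}$. Whenever the support $\{J:m_J>0\}$ contains a $\subset$-incomparable pair $J_1,J_2$, I apply the $<'$-version of~\eqref{stardef}, decrementing $m_{J_1},m_{J_2}$ by $1$ and incrementing $m_{J_1\cup J_2},m_{J_1*' J_2}$ by $1$. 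The identity preserves $Nx$, and the hypothesis that $\cJ$ is closed under $*'$ (together with automatic closure under $\cup$) keeps all indices in $\cJ$. Upon termination the support is a chain in $\cJ$, which I extend to a maximal chain; by Proposition~\ref{sublattices} and its corollary this chain equals $C_\prec$ for a unique linearization $\prec$ of $<$, and $x$ is thereby realized as a convex combination of vertices of $\Delta_{\prec,<'}$.

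The main obstacle is showing that this straightening halts, for which I would use the integer-valued measure $\mu=\sum_J m_J\cdot 2^{|J|}$. Writing $a=|J_1\cap J_2|$, $b=|J_1|$, $c=|J_2|$, $d=|J_1\cup J_2|$, $s=|J_1*'J_2|$, one has $a+d=b+c$ and $s\le a<b\le c<d$, so the increment of $\mu$ at a single step is
\[2^d+2^s-2^b-2^c\ge 2^d-2^{c+1}+2^s\ge 1\]
(using $d\ge c+1$). Thus $\mu$ strictly increases by an integer amount at every step while remaining bounded above by $N\cdot 2^{|P|}$, forcing the procedure to stop in finitely many steps. The reduction to integer coefficients is crucial: with real coefficients the analogous measure would only be guaranteed to converge, not to stabilize, so the discreteness provided by the common denominator $N$ is what turns convergence into termination.
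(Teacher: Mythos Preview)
Your proof is correct and follows essentially the same straightening argument as the paper: write a point as a (rational) convex combination of the $\mathbf 1_{\max_{<'}J}$ and repeatedly replace an incomparable pair $J_1,J_2$ by $J_1\cup J_2$ and $J_1*'J_2$ until the support is a chain. The only cosmetic differences are that the paper keeps rational coefficients and subtracts $\min(c_{J_1},c_{J_2})$ at each step, using the monovariant $\sum_J c_J\,|P\setminus J|^2$ (with termination via a bounded common denominator), whereas you clear denominators first and use the monovariant $\sum_J m_J\,2^{|J|}$; both variants work.
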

\begin{proof}
For any linearization $\prec$ of $<$ any order ideal $J$ in $(P,\prec)$ is also an ideal in $(P,<)$. Hence, the vertex $\mathbf 1_{\max_{<'} J}$ of $\Delta_{\prec,<'}$ is also a vertex of $\cR(P,<,<')$ and the latter contains the former.

We are to show that any point $x\in \cR(P,<,<')$ is contained in one of the $\Delta_{\prec,<'}$ with $\prec$ a linearization of $<$. Indeed, $x$ can be expressed as convex linear combination of vertices of $\cR(P,<,<')$: \[x=\sum_{J\in\cJ} c_J\mathbf 1_{\max_{<'}J}\] where all $c_J\ge 0$ and $\sum c_J=1$. We will assume that all $c_J$ are rational, it suffices to prove the claim for such $x$ since they form a dense subset. 

We prove that the $c_J$ can be chosen in such a way that if $c_{J_1},c_{J_2}>0$, then $J_1$ and $J_2$ are comparable in terms of the poset structure on $\cJ$, i.e.\ one contains the other. Suppose that $J_1$ and $J_2$ are incomparable and $c_{J_1},c_{J_2}>0$. Let us describe a procedure of obtaining a new set of coefficients $\tilde c_J$ such that 
\begin{equation}\label{step}
x=\sum_{J\in\cJ} \tilde c_J\mathbf 1_{\max_{<'}J}.
\end{equation}
We set $\tilde c_J=c_J$ unless  $J\in\{J_1,J_2,J_1\cup J_2,J_1*'J_2\}$ and 
\begin{align*}
\tilde c_{J_1}&=c_{J_1}-\min(c_{J_1},c_{J_2}),\\
\tilde c_{J_2}&=c_{J_2}-\min(c_{J_1},c_{J_2}),\\
\tilde c_{J_1\cup J_2}&=c_{J_1\cup J_2}+\min(c_{J_1},c_{J_2}),\\
\tilde c_{J_1*' J_2}&=c_{J_1*' J_2}+\min(c_{J_1},c_{J_2}).
\end{align*}
The equality~\eqref{step} follows directly from the definition of $*_{P,<'}$. Now, since $|J_1|+|J_2|=|J_1\cup J_2|+|J_1\cap J_2|$ and $J_1*' J_2\subset J_1\cap J_2$, we have 
\begin{multline}\label{squaresineq}
|P\backslash J_1|^2+|P\backslash J_2|^2\le |P\backslash (J_1\cup J_2)|^2+|P\backslash (J_1\cap J_2)|^2\le\\ |P\backslash (J_1\cup J_2)|^2+|P\backslash (J_1*'J_2)|^2. 
\end{multline}
We obtain $\sum c_J|P\backslash J|^2<\sum \tilde c_J|P\backslash J|^2$. Since the least common denominator of the $\tilde c_J$ is no greater than that of the $c_J$, we see that we may not repeat this procedure infinitely. Hence, by iterating the procedure we will arrive at a convex linear combination $x=\sum b_J\mathbf 1_{\max_{<'}J}$ in which those $J$ for which $b_J>0$ are pairwise comparable as desired.

Thus, all $J$ with $b_J>0$ are contained in some maximal chain $C$ in the lattice $\cJ$. However, $C=C_\prec$ for some linearization $\prec$ of $<$ and the points $\mathbf 1_{\max_{<'}J}$ with $J\in C$ are precisely the vertices of $\Delta_{\prec,<'}$. We obtain $x\in\Delta_{\prec,<'}$.
\end{proof}

\begin{remark}
One may, in fact, show that for any order $<'$ weaker than $<$ the condition that $\cJ$ is closed under $*'$ is equivalent to the convex hull of $\{\one_{\max_{<'}J}\}_{J\in\cJ}$ being a union of several simplices $\Delta_{\prec,<'}$. It would be interesting to find other equivalent conditions, in particular, in purely order-theoretic (rather than convex geometric) terms.
\end{remark}

\begin{example}
We give an example where both $\cJ$ is not closed under $*'$ and $\cR(P,<,<')$ is not a union of the simplices $\Delta_{\prec,<'}$. Consider $P=\{a,b,c,d\}$ with partial orders given by $a<b$, $b<c$, $b<d$ and $a<'c$, $a<' d$. Then $J_1=\{a,b,c\}$ and $J_2=\{a,b,d\}$ lie in $\cJ$ and $\cJ'$. On one hand, $J_1*'J_2=\{b\}$ but $\{b\}\notin\cJ$ which means that $\cJ$ is not closed under $*'$. On the other, $<$ has two linearizations: $a\prec_1 b\prec_1 c\prec_1 d$ and $a\prec_2 b\prec_2 d\prec_2 c$. The point $x$ with $x_a=0$, $x_b=1$ and $x_c=x_d=1/2$ (i.e.\ the midpoint between $\one_{\max_{<'}J_1}$ and $\one_{\max_{<'}J_2}$) is seen to lie outside of both $\Delta_{\prec_1,<'}$ and $\Delta_{\prec_2,<'}$.
\end{example}

We have two important consequences.
\begin{cor}
$\cR(P,<,<')$ is a normal polytope.
\end{cor}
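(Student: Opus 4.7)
The plan is to derive normality directly from the preceding lemma together with the well-known unimodularity of the canonical triangulation of the chain polytope $\cC(P,<')$. The key observation is that since $<$ is stronger than $<'$, every linearization $\prec$ of $<$ is also a linearization of $<'$; hence each simplex $\Delta_{\prec,<'}$ appearing in the lemma is one of the simplices of the canonical triangulation of $\cC(P,<')$.

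First I would verify that each such $\Delta_{\prec,<'}$ is a unimodular lattice simplex, all of whose vertices lie in $\cR(P,<,<')\cap\bZ^P$. Recall that $\Delta_{\prec,<'}=\phi_{<'}(\cO(P,\prec))$, and $\cO(P,\prec)$ is the standard simplex with vertices $\{\mathbf 1_J\}_{J\in C_\prec}$, which form a unimodular basis of the affine lattice. The map $\phi_{<'}$ is linear on $\cO(P,\prec)$ and, when the coordinates $p\in P$ are listed in the order induced by $\prec$, it is given by an integer upper-unitriangular transformation. Thus $\Delta_{\prec,<'}$ is unimodular with vertex set $\{\mathbf 1_{\max_{<'}J}\}_{J\in C_\prec}\subset\cR(P,<,<')\cap\bZ^P$, which implies that every lattice point in $k\Delta_{\prec,<'}$ is a nonnegative integer combination of these vertices with coefficients summing to $k$.

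To finish, take $k\ge 1$ and any $x\in k\cR(P,<,<')\cap\bZ^P$. Then $x/k\in\cR(P,<,<')$, so by the preceding lemma $x/k\in\Delta_{\prec,<'}$ for some linearization $\prec$ of $<$, and consequently $x\in k\Delta_{\prec,<'}\cap\bZ^P$. By the unimodularity step, $x$ decomposes as a sum of $k$ vertices of $\Delta_{\prec,<'}$, each of which is an integer point of $\cR(P,<,<')$. This is precisely the normality condition.

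The only non-routine ingredient is the unimodularity of the simplices $\Delta_{\prec,<'}$; this is implicit in Stanley's original treatment of chain polytopes but is the crucial point that lets the lemma's triangulation certify normality. Everything else reduces to mechanically combining the lemma with the definition of a normal polytope.
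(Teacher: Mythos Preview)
Your proposal is correct and follows essentially the same approach as the paper: both use the triangulation from the preceding lemma into the simplices $\Delta_{\prec,<'}$ and invoke their unimodularity to conclude normality. The paper's proof is a one-liner (``It is subdivided into normal polytopes (unimodular simplices) $\Delta_{\prec,<'}$''), while you spell out the unimodularity of $\Delta_{\prec,<'}$ via the unitriangularity of $\phi_{<'}$ restricted to $\cO(P,\prec)$ and explicitly unpack why a subdivision into unimodular simplices yields normality.
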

\begin{proof}
It is subdivided into normal polytopes (unimodular simplices) $\Delta_{\prec,<'}$.
\end{proof}

\begin{theorem}
$\cR(P,<,<')$ is Ehrhart-equivalent to $\cO(P,<)$ and $\cC(P,<)$.
\end{theorem}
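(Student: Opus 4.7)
The plan is to establish a lattice-point-preserving bijection from $\cO(P,<)$ onto $\cR(P,<,<')$ via the transfer map $\phi_{<'}$ introduced in Section~\ref{monomialideal}. Combined with Lemma~\ref{ehrhart}, this will yield Ehrhart equivalence with both $\cO(P,<)$ and $\cC(P,<)$ simultaneously.

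First I would exploit the observation that, since $<'$ is weaker than $<$, every linearization $\prec$ of $<$ is automatically a linearization of $<'$. Consequently, Stanley's triangulation $\cO(P,<)=\bigcup_\prec \cO(P,\prec)$ and Lemma~\ref{triangulation} giving $\cR(P,<,<')=\bigcup_\prec \Delta_{\prec,<'}$ run over the same index set of linearizations, so the two polytopes carry triangulations in natural bijection. On each piece $\cO(P,\prec)$, which sits inside the corresponding simplex in the finer triangulation of $\cO(P,<')$, the map $\phi_{<'}$ restricts to a linear map, and the formula $\phi_{<'}(\one_J)=\one_{\max_{<'} J}$ (established in Section~\ref{monomialideal}) shows that it sends the vertices of $\cO(P,\prec)$ precisely onto the vertices of $\Delta_{\prec,<'}$.

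Next I would verify that this simplex-wise restriction is unimodular. Ordering the elements of $P$ compatibly with $\prec$, one sees that on $\cO(P,\prec)$ the value $\max_{q>'p} x_q$ is attained at the $\prec$-smallest element $q$ with $q>'p$, so the matrix of $\phi_{<'}|_{\cO(P,\prec)}$ becomes upper triangular with $1$s on the diagonal, hence has determinant $1$. As a unimodular linear automorphism of $\bR^P$, it carries $k\cO(P,\prec)\cap\bZ^P$ bijectively onto $k\Delta_{\prec,<'}\cap\bZ^P$ for every $k\ge 0$.

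Because $\phi_{<'}$ is globally continuous and, by Stanley's Theorem~3.2, already a homeomorphism $\cO(P,<')\to\cC(P,<')$, the simplex-wise bijections glue coherently into a single bijection $\cO(P,<)\to\cR(P,<,<')$ that preserves integer points at every dilation scale. This gives $|k\cO(P,<)\cap\bZ^P|=|k\cR(P,<,<')\cap\bZ^P|$ for all $k\ge 0$, so the Ehrhart polynomials coincide; Ehrhart equivalence with $\cC(P,<)$ then follows immediately from Lemma~\ref{ehrhart}. The only delicate point is the consistency of the gluing on boundary faces shared by several simplices together with the unimodularity verification; both are controlled by the global character of $\phi_{<'}$ as a bijection of the ambient polytopes, so there is no over- or undercounting.
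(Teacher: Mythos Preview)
Your argument is correct and takes a somewhat different route from the paper. Both proofs rest on Lemma~\ref{triangulation} and the unimodularity of the simplices $\Delta_{\prec,<'}$, but you package this via the global transfer map $\phi_{<'}$: you observe that $\cO(P,<)\subset\cO(P,<')$, that $\phi_{<'}$ is linear and unimodular on each $\cO(P,\prec)$ (for $\prec$ a linearization of $<$, hence of $<'$), and that it carries this simplex onto $\Delta_{\prec,<'}$; global injectivity of $\phi_{<'}$ handles the gluing. The paper instead counts lattice points of $m\cR(P,<,<')$ directly: each integer point lies in some $m\Delta_{\prec,<'}$ and, by unimodularity, is uniquely written as $\sum_{J\in C_\prec} c_J\,\one_{\max_{<'}J}$ with nonnegative integers $c_J$ summing to $m$, i.e.\ as a weakly increasing $m$-tuple in $\cJ$; Lemma~\ref{ehrhart} then matches this count with $m\cO(P,<)$.

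Your geometric route is clean and avoids the explicit enumeration. The paper's combinatorial route buys a little extra: it immediately yields Corollary~\ref{inctuple} (the unique decomposition of each lattice point of $m\cR(P,<,<')$ as $\one_{\max_{<'}J_1}+\dots+\one_{\max_{<'}J_m}$ with $J_1\subseteq\dots\subseteq J_m$), which is used repeatedly afterwards. One could of course recover this corollary from your bijection by composing with the tuple description of lattice points in $m\cO(P,<)$.
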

\begin{proof}
For any $m\in\bZ_{\ge 0}$ the simplices $m\Delta_{\prec,<'}$ with $\prec$ a linearization of $<$ form a triangulation of $m\cR(P,<,<')$. Therefore, any integer point $x\in m\cR(P,<,<')$ can be uniquely expressed as a convex linear combination of the vertices of some $m\Delta_{\prec,<'}$. Moreover, since $\Delta_{\prec,<'}$ is a unimodular simplex, the coefficients in this linear combination will lie in $\bZ/m$. In other words, we can uniquely write $x$ in the form \[x=\sum_{J\in C_\prec}c_J\mathbf 1_{\max_{<'}J}\] where $c_J\in\bZ_{\ge0}$ and $\sum c_J=m$. Conversely, any linear combination of this form is obviously an integer point in $m\cR(P,<,<')$. Hence, the integer points in $m\cR(P,<,<')$ are in bijection with the set of size $m$ submultisets of all maximal chains in $\cJ$. This is precisely the set of size $m$ weakly increasing tuples in $\cJ$ and the claim follows from Lemma~\ref{ehrhart}.
\end{proof}

With the above argument we have also shown the following.
\begin{cor}\label{inctuple}
Every integer point $x\in m\cR(P,<,<')$ can be uniquely expressed in the form \[x=\mathbf 1_{\max_{<'}J_1}+\dots+\mathbf 1_{\max_{<'}J_m}\] with $J_1\subset\dots\subset J_m$ a weakly increasing tuple in $\cJ(P,<)$.
\end{cor}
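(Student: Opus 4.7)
The plan is to extract this statement directly from the argument given in the preceding Ehrhart-equivalence theorem. First, recall that the simplices $m\Delta_{\prec,<'}$ (as $\prec$ ranges over linearizations of $<$) form a triangulation of $m\cR(P,<,<')$ by Lemma~\ref{triangulation}, and each such simplex is unimodular. Consequently, any integer point $x \in m\cR(P,<,<')$ lies in at least one simplex $m\Delta_{\prec,<'}$ and admits a unique representation
\[
x = \sum_{J \in C_\prec} c_J\, \mathbf{1}_{\max_{<'}J}
\]
with $c_J \in \bZ_{\ge 0}$ and $\sum_J c_J = m$. Since $C_\prec$ is a chain in $\cJ$, listing each $J \in C_\prec$ with multiplicity $c_J$ in increasing order produces a weakly increasing tuple $J_1 \subseteq \dots \subseteq J_m$ in $\cJ$ that witnesses the claimed decomposition of $x$. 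This establishes existence.

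For uniqueness, suppose $x$ admits two such decompositions, coming from weakly increasing tuples whose distinct members form chains in $\cJ$ extendable to maximal chains $C_{\prec_1}$ and $C_{\prec_2}$ respectively. Then $x$ lies in both unimodular simplices $m\Delta_{\prec_1,<'}$ and $m\Delta_{\prec_2,<'}$, and hence in their common face $F$, whose vertices are precisely the points $\mathbf{1}_{\max_{<'}J}$ for $J \in C_{\prec_1} \cap C_{\prec_2}$. Expressing $x$ as a convex combination of vertices of $F$ and invoking the uniqueness of barycentric coordinates inside $m\Delta_{\prec_1,<'}$, one finds that all coefficients on vertices of $m\Delta_{\prec_1,<'}$ outside $F$ vanish; the same conclusion applies to $m\Delta_{\prec_2,<'}$. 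Thus the two expressions, and hence the two weakly increasing tuples, coincide.

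The only subtle point is the uniqueness step: a given weakly increasing tuple in $\cJ$ may be extendable to several different maximal chains of $\cJ$, so one must rule out the possibility of distinct weakly increasing decompositions associated with different extensions. The argument above dispatches this by exploiting the fact that simplices in the triangulation meet along common faces whose vertex sets are shared between both simplices, combined with unimodularity; no additional work beyond what was already done for the theorem is required.
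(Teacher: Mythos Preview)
Your proof is correct and follows essentially the same route as the paper: existence via the triangulation into the unimodular simplices $m\Delta_{\prec,<'}$ from Lemma~\ref{triangulation}, and uniqueness from the fact that two simplices in the triangulation meet along a common face. The paper simply asserts the bijection between integer points and weakly increasing tuples as a byproduct of the Ehrhart-equivalence argument, whereas you spell out the uniqueness step explicitly via barycentric coordinates on the common face; this is exactly the implicit reasoning behind the paper's claim.
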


\subsection{Toric variety and toric ideal}

Let $H(P,<,<')$ denote the toric variety of the polytope $\cR(P,<,<')$, we will call it the \textbf{relative Hibi variety}. Since $\cR(P,<,<')$ is normal and its integer points are parametrized by $\cJ$, the variety $H(P,<,<')$ can be embedded into $\bP(\bC^\cJ)$. It is the subvariety cut out by the kernel of the map $\varphi_{P,<,<'}:\bC[\cJ]\to\bC[P,t]$ given by 
\begin{equation}\label{relkernel}
\varphi_{P,<,<'}(X_J)=t\prod_{p\in\max_{<'}J} z_p. 
\end{equation}
We denote this kernel $I_{P,<,<'}$ and call it the \textbf{relative Hibi ideal}.

\begin{proposition}
The monomial ideal $I^m_{P,<}$ is an initial ideal of $I_{P,<,<'}$.
\end{proposition}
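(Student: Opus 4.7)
My plan is to exhibit an explicit weight vector $w\in\bR^{\cJ}$ realizing $I^m_{P,<}$ as the corresponding initial ideal. Motivated by the proof of Lemma~\ref{triangulation}, the natural candidate is $w(J)=|P\setminus J|^2$.

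The first step is to produce enough binomials in $I_{P,<,<'}$. For any $J_1,J_2\in\cJ$, identity~\eqref{stardef} applied with $<'$ in place of $<$ (valid since $J_1,J_2\in\cJ\subset\cJ'$) gives
\[
\one_{\max_{<'}J_1}+\one_{\max_{<'}J_2}=\one_{\max_{<'}(J_1\cup J_2)}+\one_{\max_{<'}(J_1*'J_2)}.
\]
By the closure assumption on $\cJ$, both $J_1\cup J_2$ and $J_1*'J_2$ lie in $\cJ$, so the binomial $f_{J_1,J_2}=X_{J_1}X_{J_2}-X_{J_1\cup J_2}X_{J_1*'J_2}$ is a well-defined element of $\bC[\cJ]$, and by the definition of $\varphi_{P,<,<'}$ in~\eqref{relkernel} it lies in $I_{P,<,<'}$.

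The second step is to identify the $w$-initial terms of these binomials. When $J_1,J_2$ are $\subset$-incomparable, I would argue that the first inequality in~\eqref{squaresineq} is strict, so that $w(J_1)+w(J_2)<w(J_1\cup J_2)+w(J_1*'J_2)$ and hence $\initial_w f_{J_1,J_2}=X_{J_1}X_{J_2}$. Ranging over all incomparable pairs gives every generator of $I^m_{P,<}$ inside $\initial_w I_{P,<,<'}$, yielding the inclusion $I^m_{P,<}\subset\initial_w I_{P,<,<'}$.

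The final step upgrades this inclusion to equality via Hilbert functions. The quotient $\bC[\cJ]/I^m_{P,<}$ has a monomial basis in degree $m$ consisting of products $X_{J_1}\cdots X_{J_m}$ with $J_1\subseteq\dots\subseteq J_m$, so its Hilbert coefficient in degree $m$ counts weakly increasing $m$-tuples in $\cJ$. The ring $\bC[\cJ]/I_{P,<,<'}$ is the homogeneous coordinate ring of the toric variety of the normal polytope $\cR(P,<,<')$, hence its degree-$m$ piece has dimension $|m\cR(P,<,<')\cap\bZ^P|$, which by Corollary~\ref{inctuple} is also the number of such tuples. Since Gr\"obner degeneration preserves Hilbert functions (Theorem~\ref{flatfamily}), the Hilbert functions of $\initial_w I_{P,<,<'}$ and $I^m_{P,<}$ agree, and together with the inclusion from the previous step this forces equality.

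The only delicate point is the strict inequality in the second step: for incomparable $J_1,J_2$ one has $|J_1\cup J_2|>\max(|J_1|,|J_2|)\ge\min(|J_1|,|J_2|)>|J_1\cap J_2|$ while $|J_1|+|J_2|=|J_1\cup J_2|+|J_1\cap J_2|$, so passing to the squares of the complementary cardinalities strictly increases the sum. Everything else is routine bookkeeping using results already established in the paper.
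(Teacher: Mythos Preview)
Your proof is correct and follows essentially the same approach as the paper: the same weight $w_J=|P\backslash J|^2$, the same binomials $X_{J_1}X_{J_2}-X_{J_1\cup J_2}X_{J_1*'J_2}$, the same strict-inequality argument from~\eqref{squaresineq}, and a Hilbert function comparison to upgrade the inclusion to equality. The only cosmetic difference is that the paper obtains the Hilbert series equality via Ehrhart equivalence with $\cO(P,<)$ and Proposition~\ref{monomialinitial}, whereas you compute both Hilbert functions directly using Corollary~\ref{inctuple}; the content is the same.
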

\begin{proof}
Since $\cR(P,<,<')$ and $\cO(P,<)$ are Ehrhart-equivalent, the corresponding toric ideals $I^h_{P,<}$ and $I_{P,<,<'}$ have the same Hilbert series. In view of Proposition~\ref{monomialinitial}, $I^m_{P,<}$ has the same Hilbert series as $I^h_{P,<}$. Therefore, it suffices to show that $I^m_{P,<}$ is contained in an initial ideal of $I_{P,<,<'}$.

For any $J_1,J_2\in\cJ$ the binomial
\begin{equation}\label{binomial1}
X_{J_1}X_{J_2}-X_{J_1\cup J_2}X_{J_1*'J_2}
\end{equation}
is easily seen to lie in $I_{P,<,<'}$. Now consider $w\in\bR^\cJ$ with $w_J=|P\backslash J|^2$. If $J_1$ and $J_2$ are incomparable, the first inequality in~\eqref{squaresineq} is strict and we have \[w_{J_1}+w_{J_2}<w_{J_1\cup J_2}+w_{J_1*'J_2}.\] Therefore, the initial part of~\eqref{binomial1} with respect to $w$ is $X_{J_1}X_{J_2}$ and we have $I^m_{P,<}\subset\initial_w I_{P,<,<'}$.
\end{proof}

We deduce
\begin{theorem}\label{quadratic}
$I_{P,<,<'}$ is generated by the binomials 
\begin{equation}\label{binomial}
X_{J_1}X_{J_2}-X_{J_1\cup J_2}X_{J_1*'J_2}
\end{equation}
with $J_1,J_2$ ranging over pairs of incomparable order ideals in $\cJ$.
\end{theorem}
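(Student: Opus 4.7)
The plan is to leverage the preceding proposition, which furnishes a weight $w \in \bR^\cJ$ (with $w_J = |P \setminus J|^2$) such that $\initial_w I_{P,<,<'} = I^m_{P,<}$, and then to run the standard ``initial ideal of a subideal'' argument.

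Let $K \subset \bC[\cJ]$ denote the ideal generated by the binomials in \eqref{binomial}. The inclusion $K \subset I_{P,<,<'}$ is immediate: applying $\varphi_{P,<,<'}$ to the generator \eqref{binomial} and using the defining identity \eqref{stardef} for the operation $*'$ (which was the reason we required $\cJ$ to be closed under $*'$), we see that both monomials of \eqref{binomial} map to the same element $t^2 \prod_p z_p^{c_p}$ of $\bC[P,t]$, so the binomial lies in $\ker \varphi_{P,<,<'} = I_{P,<,<'}$.

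For the reverse inclusion, I would compare $w$-initial ideals. For $J_1, J_2 \in \cJ$ incomparable with respect to $\subset$, the first inequality in \eqref{squaresineq} is strict, so $w_{J_1} + w_{J_2} < w_{J_1 \cup J_2} + w_{J_1 *' J_2}$, which means $\initial_w(X_{J_1}X_{J_2} - X_{J_1 \cup J_2}X_{J_1 *' J_2}) = X_{J_1}X_{J_2}$. As $(J_1, J_2)$ ranges over all incomparable pairs, these monomials are precisely the generators of $I^m_{P,<}$. Hence $I^m_{P,<} \subset \initial_w K \subset \initial_w I_{P,<,<'} = I^m_{P,<}$, so all three ideals coincide.

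Finally, $K \subset I_{P,<,<'}$ and $\initial_w K = \initial_w I_{P,<,<'}$ together force $K = I_{P,<,<'}$: by Theorem~\ref{flatfamily} each of $K$ and $I_{P,<,<'}$ has the same Hilbert series as its $w$-initial ideal, so they have equal Hilbert series, and a containment of homogeneous ideals with equal Hilbert series is an equality. The only substantive point is the observation that the initial forms of \eqref{binomial} for incomparable pairs recover exactly the monomial generators of $I^m_{P,<}$; everything else is formal.
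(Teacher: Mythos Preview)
Your proof is correct and follows essentially the same approach as the paper's: you show that the subideal $K$ generated by the binomials satisfies $I^m_{P,<}\subset\initial_w K\subset\initial_w I_{P,<,<'}=I^m_{P,<}$ for the weight $w_J=|P\backslash J|^2$, and then conclude $K=I_{P,<,<'}$ via equality of Hilbert series. The paper's proof is the same argument stated more tersely, simply noting that the proof of the preceding proposition already established $I^m_{P,<}\subset\initial_w K$.
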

\begin{proof}
The proof of the previous proposition shows that $I^m_{P,<}$ is an initial ideal of the subideal in $I_{P,<,<'}$ generated by these binomials. Hence, this subideal must coincide with all of $I_{P,<,<'}$.
\end{proof}

\begin{proposition}\label{groebnercone}
The maximal cone $C(I_{P,<,<'},I^m_{P,<})$ in the Gr\"obner fan of $I_{P,<,<'}$ consists of those $w\in\bR^\cJ$ which satisfy \begin{equation}\label{definingineq}
w_{J_1}+w_{J_2}<w_{J_1\cup J_2}+w_{J_1*'J_2}
\end{equation}
for any incomparable $J_1$ and $J_2$.
\end{proposition}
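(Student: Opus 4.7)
The plan is to prove the claim by a double inclusion, both directions of which follow cleanly once Theorem~\ref{quadratic} is in hand.

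For the inclusion $\supseteq$, I would suppose that $w\in\bR^\cJ$ satisfies $w_{J_1}+w_{J_2}<w_{J_1\cup J_2}+w_{J_1*'J_2}$ for every incomparable pair. Then for each generator~\eqref{binomial} of $I_{P,<,<'}$ provided by Theorem~\ref{quadratic}, the $w$-initial part is the single monomial $X_{J_1}X_{J_2}$, and these monomials generate $I^m_{P,<}$. This gives $I^m_{P,<}\subseteq\initial_w I_{P,<,<'}$. To promote this to equality I would invoke the Hilbert series argument already used in the proof of the preceding proposition: $I^h_{P,<}$ and $I_{P,<,<'}$ share a Hilbert series by Ehrhart-equivalence, and by Proposition~\ref{monomialinitial} so does $I^m_{P,<}$. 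Thus $\initial_w I_{P,<,<'}=I^m_{P,<}$, i.e.\ $w\in C(I_{P,<,<'},I^m_{P,<})$.

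For the inclusion $\subseteq$, I would fix $w\in C(I_{P,<,<'},I^m_{P,<})$ and any incomparable pair $J_1,J_2\in\cJ$. The binomial~\eqref{binomial} lies in $I_{P,<,<'}$, so its $w$-initial part must lie in $I^m_{P,<}$. The key observation is that $J_1*'J_2\subseteq J_1\cap J_2\subseteq J_1\cup J_2$, so the two ideals $J_1\cup J_2$ and $J_1*'J_2$ are $\subseteq$-comparable; consequently the monomial $X_{J_1\cup J_2}X_{J_1*'J_2}$ does \emph{not} lie in the monomial ideal $I^m_{P,<}$. If we had equality $w_{J_1}+w_{J_2}=w_{J_1\cup J_2}+w_{J_1*'J_2}$, the initial part would be the whole binomial, which cannot belong to a monomial ideal since one of its monomials is absent from $I^m_{P,<}$; if we had the reverse strict inequality, the initial part would be $-X_{J_1\cup J_2}X_{J_1*'J_2}$, again not in $I^m_{P,<}$. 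The only remaining possibility is the desired strict inequality~\eqref{definingineq}.

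I do not anticipate a serious obstacle here: the combinatorial input (containment $J_1*'J_2\subseteq J_1\cup J_2$ and hence $\subseteq$-comparability) is immediate from the definition of $*'$, and the analytic input (Hilbert series equality) has already been established in this subsection. The one point that merits care is making sure the generating set from Theorem~\ref{quadratic} is used in the $\supseteq$ direction so that checking the initial-ideal condition on generators suffices; that is precisely the role of Theorem~\ref{quadratic}.
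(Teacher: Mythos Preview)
Your proof is correct and follows essentially the same approach as the paper's. The paper argues both directions in the same way, just more tersely: it notes that if some inequality fails the initial part of the corresponding binomial will not lie in $I^m_{P,<}$, and conversely that if all inequalities hold then $\initial_w I_{P,<,<'}\supseteq I^m_{P,<}$ ``and hence coincides with the latter''. You make explicit the two points the paper leaves implicit---the comparability $J_1*'J_2\subseteq J_1\cup J_2$ forcing $X_{J_1\cup J_2}X_{J_1*'J_2}\notin I^m_{P,<}$, and the Hilbert-series argument upgrading containment to equality---but the structure is the same. (A minor remark: Theorem~\ref{quadratic} is not actually needed in your $\supseteq$ direction, since you only use that the binomials lie in $I_{P,<,<'}$, not that they generate it.)
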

\begin{proof}
Evidently, if some $w$ does not satisfy this inequality for $J_1$ and $J_2$, then the corresponding initial part of the binomial~\eqref{binomial} will not lie in $I^m_{P,<}$. Conversely, if the inequality is satisfied, then the initial part of~\eqref{binomial} equals $X_{J_1}X_{J_2}$. Thus, if all the inequalities are satisfied, $\initial_w I_{P,<,<'}$ contains $I^m_{P,<}$ and hence coincides with the latter.
\end{proof}

\subsection{Regular subdivisions and secondary fan}\label{subdivisions}

Next let us consider the secondary fan of $\cR(P,<,<')$. By Proposition~\ref{rppverts} the vertices of $\cR(P,<,<')$ are in bijection with $\cJ$, we may view it as a fan in $R^\cJ$.
\begin{proposition}\label{secondarycone}
The triangulation \[\Psi(P,<,<')=\{\Delta_{\prec,<'}|\prec\text{ is a linearization of }<\}\] of $\cR(P,<,<')$ is regular. The maximal cone $C(\Psi(P,<,<'))$ consists of points $w\in\bR^\cJ$ for which \[w_{J_1}+w_{J_2}< w_{J_1\cup J_2}+w_{J_1*'J_2}\] for all $J_1,J_2\in\cJ$.
\end{proposition}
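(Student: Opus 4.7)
The strategy is to derive this proposition by exactly the two-step argument used for Proposition~\ref{conescoincide1}: first invoke Corollary~\ref{conescoincide0} (a consequence of Zhu's theorem) to identify the Gr\"obner cone $C(I_{P,<,<'},I^m_{P,<})$ with a maximal secondary cone of $\cR(P,<,<')$, and then read off the inequality description directly from Proposition~\ref{groebnercone}. Under this setup Proposition~\ref{groebnercone} does the analytic work, while the present proposition becomes a translation of that work from the Gr\"obner to the secondary side.

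To set things up I first identify $I_{P,<,<'}$ with the toric ideal $I_{V(\cR(P,<,<'))}$ appearing in Zhu's theorem. The points $\one_{\max_{<'}J}$ with $J\in\cJ$ are pairwise distinct (each $J\in\cJ\subset\cJ'$ is a $<'$-ideal and is therefore recovered from its set of $<'$-maximal elements), all lie in $\{0,1\}^P$ and hence are automatically vertices of $\cR(P,<,<')$, and they exhaust its integer points because $\cR(P,<,<')$ is normal and the simplices $\Delta_{\prec,<'}$ from Lemma~\ref{triangulation} are unimodular. Under the resulting bijection $V(\cR(P,<,<'))\simeq\cJ$ the formula~\eqref{relkernel} is precisely the monomial parametrization of the vertex set, so $I_{P,<,<'}=I_{V(\cR(P,<,<'))}$. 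Now pick $w$ in the relative interior of $C(I_{P,<,<'},I^m_{P,<})$, so that $\initial_w I_{P,<,<'}=I^m_{P,<}$; this ideal is radical as a squarefree monomial ideal, so Corollary~\ref{conescoincide0} applies and yields $C(I_{P,<,<'},I^m_{P,<})=C(\Theta_{\cR(P,<,<')}(w))$. Zhu's theorem then identifies the parts of $\Theta_{\cR(P,<,<')}(w)$ with the minimal primes of $I^m_{P,<}$ via $Q_i\leftrightarrow I_{V(Q_i),\cJ}$; Subsection~\ref{monomialideal} records that these minimal primes are exactly the ideals $I_C$ for maximal chains $C\subset\cJ$, and the corollary following Proposition~\ref{sublattices} identifies such chains with linearizations $\prec$ of $<$ via $C=C_\prec$. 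Since any lattice polytope is determined by its vertex set, the part indexed by $\prec$ is the convex hull of $\{\one_{\max_{<'}J}:J\in C_\prec\}$, i.e.\ the simplex $\Delta_{\prec,<'}$. Hence $\Theta_{\cR(P,<,<')}(w)=\Psi(P,<,<')$, which proves regularity of this subdivision; because $C(I_{P,<,<'},I^m_{P,<})$ is a maximal Gr\"obner cone by Theorem~\ref{gfans}(e), the matching secondary cone is maximal as well, and Theorem~\ref{secondaryfan}(c) then forces $\Psi(P,<,<')$ to actually be a triangulation. The explicit inequality description of $C(\Psi(P,<,<'))$ is inherited verbatim from Proposition~\ref{groebnercone}.

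The argument is essentially formal once Zhu's theorem and Proposition~\ref{groebnercone} are in hand; the substantive combinatorial content has already been placed in Lemma~\ref{triangulation} (which sets up the simplices $\Delta_{\prec,<'}$ as covering $\cR(P,<,<')$) and in the inequality analysis behind Proposition~\ref{groebnercone}. The only care needed is in the bookkeeping of the three parallel bijections vertices $\leftrightarrow$ ideals, maximal chains $\leftrightarrow$ linearizations, and minimal primes $\leftrightarrow$ parts, together with the observation that the parts of the subdivision are pinned down by their vertex sets and therefore are forced to coincide with the $\Delta_{\prec,<'}$. No genuinely new obstacle appears.
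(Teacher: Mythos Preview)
Your proof is correct and follows essentially the same route as the paper: identify $I_{P,<,<'}$ with $I_{V(\cR(P,<,<'))}$, apply Corollary~\ref{conescoincide0} to match the Gr\"obner cone $C(I_{P,<,<'},I^m_{P,<})$ with a secondary cone, identify the parts of the corresponding subdivision with the simplices $\Delta_{\prec,<'}$ via the minimal primes $I_{C_\prec}$ of $I^m_{P,<}$, and read off the inequalities from Proposition~\ref{groebnercone}. The only minor difference is directional: the paper first notes that $\Psi(P,<,<')$ is regular because it is a subcomplex of the regular triangulation of $\cC(P,<')$ established in Proposition~\ref{conescoincide1}, and then passes from the secondary side to the Gr\"obner side; you instead start on the Gr\"obner side and obtain regularity as a byproduct, which is slightly more economical since it avoids the separate subcomplex argument.
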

\begin{proof}
The fact that $\Psi(P,<,<')$ is regular follows directly from Proposition~\ref{conescoincide1}, since it is a subcomplex of a regular triangulation of the larger polytope $\cC(P,<')$. For any linearization $\prec$ of $<$ the ideal $I_{V(\Delta_{\prec,<'}),V(\cR(P,<,<'))}$ (where we identify $S_{V(\cR(P,<,<'))}$ with $\bC[\cJ]$) is precisely the ideal $I_{C_\prec}$ (see Subsection~\ref{monomialideal}). The intersection of these ideals over all linearizations is $I^m_{P,<}$ which is radical. Furthermore, $I_{V(\cR(P,<,<'))}=I_{P,<,<'}$ because $\cR(P,<,<')$ is normal and has no lattice points other than its vertices. Hence, by Corollary~\ref{conescoincide0}, we must have $C(\Psi(P,<,<'))=C(I_{P,<,<'},I^m_{P,<})$, and the proposition now follows from Proposition~\ref{groebnercone}.
\end{proof}

To prove the property of relative poset polytopes that, to a large extent, motivated their definition (Theorem~\ref{partsrpp}) we will need the following lemma.
\begin{lemma}\label{convexunion}
Suppose that linearizations $\prec_1,\dots,\prec_k$ of $<'$ are such that the set $\bigcup_i \Delta_{\prec_i,<'}$ is convex. Then $\bigcup_i \cO(P,\prec_i)$ is also convex.
\end{lemma}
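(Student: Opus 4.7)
The plan is to show that the convexity of $U:=\bigcup_i \Delta_{\prec_i,<'}$ forces $U$ to coincide with a relative poset polytope $\cR(P,<'',<')$ for a suitable order $<''$ on $P$, and that $\prec_1,\dots,\prec_k$ are precisely the linearizations of $<''$. Once that is established, one immediately deduces $\bigcup_i\cO(P,\prec_i)=\cO(P,<'')$, which is convex as an ordinary order polytope.

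First I would set $V_0:=\bigcup_i C_{\prec_i}\subseteq\cJ(P,<')$ and run a midpoint argument inside $U$. For any $J_1,J_2\in V_0$ the identity~\eqref{stardef} reads
\[\mathbf 1_{\max_{<'}J_1}+\mathbf 1_{\max_{<'}J_2}=\mathbf 1_{\max_{<'}(J_1\cup J_2)}+\mathbf 1_{\max_{<'}(J_1*'J_2)}.\]
By convexity the midpoint of the two left-hand vertices lies in some simplex $\Delta_{\prec_k,<'}$; uniqueness of the lattice-point decomposition in the unimodular simplex $\Delta_{\prec_k,<'}=\cR(P,\prec_k,<')$ (an instance of Corollary~\ref{inctuple}) then forces both $J_1\cup J_2$ and $J_1*'J_2$ to lie in $C_{\prec_k}$. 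Consequently $V_0$ is closed under $\cup$ and under $*_{P,<'}$.

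The principal obstacle, which I expect to require a more delicate combinatorial argument, is to upgrade closure under $*'$ to closure under $\cap$, so that $V_0$ becomes a sublattice of $\cJ(P,<')$ of height $|P|+1$. Once that is available, Proposition~\ref{sublattices} supplies a unique order $<''$ on $P$ stronger than $<'$ with $\cJ(P,<'')=V_0$, and the closure under $*_{P,<'}$ already obtained guarantees that $\cJ(P,<'')$ is $*'$-closed, so $\cR(P,<'',<')$ is a relative poset polytope in the sense of this paper. Both $U$ and $\cR(P,<'',<')$ are then convex and share the common vertex set $\{\mathbf 1_{\max_{<'}J}:J\in V_0\}$ (the vertices of $U$ are exactly these integer points, as every simplex $\Delta_{\prec_i,<'}$ is unimodular), so they coincide.

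Finally I would invoke Lemma~\ref{triangulation} to write $\cR(P,<'',<')=\bigcup_\prec\Delta_{\prec,<'}$ with $\prec$ ranging over all linearizations of $<''$. For any such $\prec\notin\{\prec_i\}$, the interior of $\Delta_{\prec,<'}$ is disjoint from every $\Delta_{\prec_i,<'}$, these being distinct top-dimensional simplices in the canonical triangulation of $\cC(P,<')$; this contradicts $\Delta_{\prec,<'}\subseteq U$. Hence $\{\prec_i\}$ coincides with the full set of linearizations of $<''$, and therefore $\bigcup_i\cO(P,\prec_i)=\bigcup_\prec\cO(P,\prec)=\cO(P,<'')$, which is convex.
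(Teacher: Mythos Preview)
Your strategy has a genuine gap that you yourself flag: you never establish that $V_0$ is closed under $\cap$. This is not a minor technicality you can expect to patch with ``a more delicate combinatorial argument'' --- it is precisely the substance of the lemma. In the paper's logical structure, Lemma~\ref{convexunion} is proved first by a direct geometric argument, and then in Theorem~\ref{partsrpp} it is \emph{applied} to deduce that $\phi_{<'}^{-1}(Q)$ is convex, hence an order polytope $\cO(P,<'')$, hence $\mathcal K=\cJ(P,<'')$ is closed under $\cap$. Your plan reverses this flow: you want closure under $\cap$ first, in order to identify $U$ with $\cR(P,<'',<')$, and then read off convexity on the order side. But closure under $\cup$ and $*'$ alone does not obviously yield closure under $\cap$ (indeed $J_1*'J_2\subsetneq J_1\cap J_2$ in general), and there is no evident convexity-based midpoint trick for $\cap$ analogous to the one you use for $\cup$ and $*'$, because $\one_{\max_{<'}(J_1\cap J_2)}$ is not a summand in any identity like~\eqref{stardef}.

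The paper's own proof avoids this entirely. It argues by contradiction: if $\bigcup_i\cO(P,\prec_i)$ were not convex, there would be a boundary facet $F$ of some $\cO(P,\prec_l)$, lying in a hyperplane $\{x_q=x_r\}$ with $r$ covering $q$ in $\prec_l$, and a vertex $\one_J$ on the wrong side (so $q\notin J$, $r\in J$). One then pushes this configuration through the piecewise-linear transfer map $\phi_{<'}$: the image $\phi_{<'}(F)$ lies in a supporting hyperplane of $\bigcup_i\Delta_{\prec_i,<'}$, and an explicit computation of that hyperplane's equation (via the saturated $<'$-chains determining $\phi_{<'}^{-1}$ on $\Delta_{\prec_l,<'}$) shows that $\one_{\max_{<'}J}$ also lies on the wrong side, contradicting the assumed convexity of $\bigcup_i\Delta_{\prec_i,<'}$. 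This is a local facet-by-facet argument and never needs to know that $V_0$ is a sublattice.
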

\begin{proof}
Suppose that $\bigcup_i \cO(P,\prec_i)$ is not convex. This means that we have a vertex $\one_J$ of $\bigcup_i \cO(P,\prec_i)$ and a facet $F$ of some $\cO(P,\prec_l)$ such that $F$ is contained in a facet of $\bigcup_i \cO(P,\prec_i)$ and the point $\one_J$ and the simplex $\cO(P,\prec_l)$ lie on opposite sides of the hyperplane containing $F$. Note that the latter hyperplane must be of the form $\{x|x_q=x_r\}$ for some $q,r\in P$ such that $r$ covers $q$ with respect to $\prec_l$. Then all $x\in \cO(P,\prec_l)$ satisfy $x_q\ge x_r$ and we must have $q\notin J$ and $r\in J$.

Recall the map $\phi_{<'}$ defined in Subsection~\ref{monomialideal}. The continuity of $\phi_{<'}$ means that $\phi_{<'}(F)$ will be contained in a facet of $\bigcup_i \Delta_{\prec_i,<'}$. On $\Delta_{\prec_l,<'}$ the inverse $\phi_{<'}^{-1}$ is given by \[\phi_{<'}^{-1}(x)_p=\sum_{i=0}^m x_{p_i}\] where $p_0=p$, for $i<m$ the element $p_{i+1}$ is $\prec_l$-minimal such that $p_i<'p_{i+1}$ and $p_m$ is maximal in $(P,<')$. Indeed, this map agrees with $\phi_{<'}^{-1}$ in the vertices of $\Delta_{\prec_l,<'}$, i.e.\ sends $\one_{\max_{<'}J}$ to $\one_J$ for $J\in\cJ(P,\prec_l)$. Note that here $p_{i+1}$ necessarily covers $p_i$ with respect to $<'$ so that $p_0,\dots,p_m$ is a saturated chain in $(P,<')$.

This means that the hyperplane containing $\phi_{<'}(F)$ (which is a bounding hyperplane for $\bigcup_i \Delta_{\prec_i,<'}$) is given by the equation
\begin{equation}\label{facet}
(x_q+\sum_{i=1}^m x_{q_i})-(x_r+\sum_{j=1}^{m'} x_{r_j})=0
\end{equation}
where the chains $q,q_1,\dots,q_m$ and $r,r_1,\dots,r_m$ are of the discussed form.
Since $q\notin J$ and $J\in\cJ'$, all $q_i\notin J$ as well. Furthermore, since $r\in J$ and $r_m$ is maximal in $(P,<')$, we see that precisely one $r_j$ lies in $\max_{<'} J$. As a result we see that when $x=\one_{\max_{<'} J}$ the left-hand side of~\eqref{facet} evaluates to $-1$. However, by construction for any point $x\in\Delta_{\prec_l,<}$ the left-hand side of~\eqref{facet} will be nonnegative. By convexity the latter must hold for any $x\in\bigcup_i \Delta_{\prec_i,<'}$, in particular, for $\one_{\max_{<'} J}$. We arrive at a contradiction.
\end{proof}

\begin{theorem}\label{partsrpp}
For $w\in\overline{C(\Psi(P,<,<'))}$ all parts of the subdivision $\Theta_{\cR(P,<,<')}(w)$ have the form $\cR(P,<'',<')$ for some $<''$ stronger than $<$.
\end{theorem}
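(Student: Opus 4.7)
The plan is to transfer the subdivision of $\cR(P,<,<')$ back to $\cO(P,<)$ using the piecewise-linear bijection $\phi_{<'}$, identify each pulled-back part as an order polytope via a midpoint argument, and push the identification forward.

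Since $w\in\overline{C(\Psi(P,<,<'))}$, Theorem~\ref{secondaryfan}(b) gives that $\Psi(P,<,<')$ refines $\Theta_{\cR(P,<,<')}(w)$, so any part $Q$ is a convex union $Q=\bigcup_{i\in I}\Delta_{\prec_i,<'}$ for some family of linearizations of $<$. By Lemma~\ref{convexunion} the set $R:=\bigcup_{i\in I}\cO(P,\prec_i)\subseteq\cO(P,<)$ is then convex as well. Its integer points---which are automatically all vertices of $R$, since they are already vertices of the ambient $\cO(P,<)$---are exactly $\{\one_J: J\in\cL\}$ where $\cL:=\bigcup_{i\in I} C_{\prec_i}\subseteq\cJ(P,<)$.

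The crux is to prove that $\cL$ is a sublattice of $\cJ(P,<)$. For $J_1,J_2\in\cL$, the midpoint
\[
m=\tfrac12(\one_{J_1}+\one_{J_2})=\tfrac12(\one_{J_1\cup J_2}+\one_{J_1\cap J_2})
\]
lies in $R$ by convexity, hence in some $\cO(P,\prec_i)$. For any point $x\in\cO(P,\prec_i)$ the function $p\mapsto x_p$ is non-increasing with respect to $\prec_i$, so each sub-level set $\{p: x_p\ge c\}$ is an order ideal of $\prec_i$; applied to $m$ this forces $J_1\cap J_2=\{m\ge 1\}$ and $J_1\cup J_2=\{m\ge 1/2\}$ to lie in $C_{\prec_i}\subseteq\cL$. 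Since $\cL$ contains the maximal chain $C_{\prec_1}$ it has height $|P|+1$, and Proposition~\ref{sublattices} then produces a unique order $<''$ stronger than $<$ with $\cJ(P,<'')=\cL$. In particular the convex polytopes $R$ and $\cO(P,<'')$ have the same vertex set, hence coincide.

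To pin down $\{\prec_i\}$ I invoke Stanley's formula $\mathrm{vol}\,\cO(P,<'')=e(<'')/|P|!$ (with $e(<'')$ the number of linear extensions), together with the fact that the $\cO(P,\prec_i)$ are pairwise almost-disjoint unimodular simplices of volume $1/|P|!$. Since $R=\cO(P,<'')$ this forces $|I|=e(<'')$, and since each $\prec_i$ already linearizes $<''$, the family $\{\prec_i\}$ is the complete set of linearizations of $<''$. Applying $\phi_{<'}$ simplex by simplex now gives
\[
Q=\bigcup_{\prec\text{ a linearization of }<''}\Delta_{\prec,<'};
\]
each such $\Delta_{\prec,<'}$ is the convex hull of $\{\one_{\max_{<'}J}: J\in C_\prec\}\subseteq\{\one_{\max_{<'}J}: J\in\cJ(P,<'')\}$, and these points conversely all lie in the convex set $Q$, so $Q$ equals the convex hull of $\{\one_{\max_{<'}J}: J\in\cJ(P,<'')\}$, which is $\cR(P,<'',<')$ by definition.

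The main obstacle is the crux step: normality of $R$ does not suffice on its own, since the decomposition of $\one_{J_1}+\one_{J_2}$ into a sum of two $0/1$-indicator vectors is highly non-unique, and one really needs the monotone-function interpretation of points in a simplex $\cO(P,\prec_i)$ in order to recover $J_1\cup J_2$ and $J_1\cap J_2$ as specific sub-level sets of the midpoint.
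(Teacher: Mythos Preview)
Your proof is correct and establishes the theorem as stated. The overall route differs from the paper's in a meaningful way, so a brief comparison is in order.

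Both arguments use Lemma~\ref{convexunion} to pass from the convex part $Q\subset\cR(P,<,<')$ to a convex union $R=\bigcup_i\cO(P,\prec_i)\subset\cO(P,<)$. From there the paths diverge. The paper proves closure of $\mathcal K=\{J:\one_{\max_{<'}J}\in Q\}$ under $\cup$ and $*'$ on the \emph{chain-polytope} side, using the convex piecewise-linear height function $f$ together with the defining inequalities~\eqref{definingineq} of the cone $\overline{C(\Psi(P,<,<'))}$; it then obtains $\cap$-closure (and the order $<''$) from the order-polytope picture. You instead stay entirely on the \emph{order-polytope} side: your midpoint/level-set argument extracts $J_1\cup J_2$ and $J_1\cap J_2$ as sub-level sets of $\tfrac12(\one_{J_1}+\one_{J_2})$ inside a single simplex $\cO(P,\prec_i)$, which is elegant and avoids invoking $w$ or the height function altogether. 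The subsequent volume count identifying $\{\prec_i\}$ with all linear extensions of $<''$ is also a pleasant alternative to the paper's ``intersection of half-spaces'' description of $R$.

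One point worth flagging: the paper's proof explicitly verifies that $\cJ(P,<'')$ is closed under $*'=*_{P,<'}$, whereas your argument does not. Since the bare Definition of $\cR(P,<'',<')$ as a convex hull does not require this, you have proved the theorem as written; but throughout the paper $\cR(P,<'',<')$ is only considered under the standing assumption of $*'$-closure, and downstream results (Theorem~\ref{quadratic}, the description of $I_{P,<'',<'}$, etc.) rely on it. You can recover this cheaply: having shown $Q=\bigcup_{\prec\text{ linearizes }<''}\Delta_{\prec,<'}$ is convex, the Remark following Lemma~\ref{triangulation} gives $*'$-closure; alternatively, one short direct argument is that for $J_1,J_2\in\cJ(P,<'')$ the point $\one_{\max_{<'}(J_1\cup J_2)}+\one_{\max_{<'}(J_1*'J_2)}=\one_{\max_{<'}J_1}+\one_{\max_{<'}J_2}$ lies in $2Q$ and hence, by Corollary~\ref{inctuple} applied inside $\cR(P,<,<')$, decomposes uniquely via a comparable pair in $\cJ(P,<'')$, forcing $J_1*'J_2\in\cJ(P,<'')$.
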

\begin{proof}
Choose a part $Q$ of $\Theta_{\cR(P,<,<')}(w)$ and let $\mathcal K\subset\cJ$ denote the set of $J$ for which $\one_{\max_{<'}J}\in Q$. We are to prove that $\mathcal K$ is closed under $\cup$, $\cap$ and $*'$. Indeed, closedness under $\cup$ and $\cap$ would imply that $\mathcal K$ is a sublattice. Since $Q$ is a union of simplices $\Delta_{\prec,<'}$, the set $\mathcal K$ must contain at least one maximal chain in $\cJ$ and be a sublattice of maximal height. Proposition~\ref{sublattices} then shows that $\mathcal K=\cJ(P,<'')$ for some $<''$ stronger than $<$ and closedness under $*'$ mean that $Q$ is indeed the relative poset polytope $\cR(P,<'',<')$.

First we show that $\mathcal K$ is closed under $\cup$ and $*'$. Let $f$ be the convex piecewise linear function on $\cR(P,<,<')$ with $f(\one_{\max_{<'}J})=w_J$ for all $J\in\cJ$ the domains of linearity of which are the parts of $\Theta_{\cR(P,<,<')}(w)$. Let $g$ be the affine function that agrees with $f$ on $Q$, then for $x\notin Q$ we have $f(x)<g(x)$. Hence, if $J_1,J_2\in\mathcal K$ but at least one of $J_1\cup J_2$ and $J_1*'J_2$ is not in $\mathcal K$, then we must have 
\begin{multline*}
w_{J_1}+w_{J_2}=g(\one_{\max_{<'}J_1})+g(\one_{\max_{<'}J_2})=g(\one_{\max_{<'}(J_1\cup J_2)})+g(\one_{\max_{<'}(J_1*'J_2)})>\\f(\one_{\max_{<'}(J_1\cup J_2)})+f(\one_{\max_{<'}(J_1*'J_2)})=w_{J_1\cup J_2}+w_{J_1*'J_2}
\end{multline*}
which contradicts $w\in\overline{C(\Psi(P,<,<'))}$ by Proposition~\ref{secondarycone}.

Now, to show that $\mathcal K$ is closed under $\cap$ note that by Lemma~\ref{convexunion} the polytope $\phi_{<'}^{-1}(Q)$ is convex. Since $\phi_{<'}^{-1}(Q)$ is a union of the simplices $\cO(P,\prec)$, it is the intersection of semispaces of the form $\{x|x_p\le x_q\}$ with $p,q\in P$. It is easily seen, however, that every such intersection with at least one interior point is necessarily an order polytope: it has the form $\cO(P,<'')$ for some $<''$. Then $\mathcal K$ must coincide with $\cJ(P,<'')$ and, in particular, be closed under $\cap$. Of course, the order $<''$ is precisely the one appearing in the lemma.
\end{proof} 

We also have a geometric counterpart of the above theorem.
\begin{theorem}\label{maindegen}
For any $w\in\overline{C(I_{P,<,<'},I^m_{P,<})}$ all irreducible components of the zero set of $\initial_w I_{P,<,<'}$ in $\bP(\bC^\cJ)$ have the form $H(P,<'',<')$ for some $<''$ stronger than $<$. Here $H(P,<'',<')\subset\bP(\bC^{\cJ(P,<'')})$ is embedded into $\bP(\bC^\cJ)$ via the natural embedding $\cJ(P,<'')\subset\cJ$.
\end{theorem}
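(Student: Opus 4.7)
The plan is to combine the cone identification from Proposition~\ref{secondarycone} with Theorem~\ref{partsrpp} to describe the subdivision associated to $w$, and then apply Zhu's theorem to turn this combinatorial picture into the asserted algebraic decomposition. Throughout, I identify $\bR^\cJ$ with $\bR^{V(\cR(P,<,<'))}$ via the natural bijection $J\mapsto\one_{\max_{<'}J}$, and $\bC[\cJ]$ with $S_{V(\cR(P,<,<'))}$.

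First, by Proposition~\ref{secondarycone} the maximal Gröbner cone $C(I_{P,<,<'},I^m_{P,<})$ coincides with the maximal secondary cone $C(\Psi(P,<,<'))$, so the same is true of their closures. Hence for any $w$ in this closure the regular subdivision $\Theta_{\cR(P,<,<')}(w)$ is defined, and by Theorem~\ref{partsrpp} each of its maximal parts has the form $Q_i=\cR(P,<''_i,<')$ for some order $<''_i$ stronger than $<$. In particular, $\cJ(P,<''_i)$ is automatically closed under $*_{P,<'}$, so the relative Hibi ideal $I_{P,<''_i,<'}$ is well-defined.

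Next I apply Zhu's theorem (Theorem~\ref{zhu}) to $Q=\cR(P,<,<')$ and $w$. The integer points of $\cR(P,<,<')$ coincide with its vertex set (this is the $m=1$ case of Corollary~\ref{inctuple}), and the associated toric map is precisely $\varphi_{P,<,<'}$ from~\eqref{relkernel}, so $I_{V(\cR(P,<,<'))}=I_{P,<,<'}$. Zhu's theorem then yields \[\sqrt{\initial_w I_{P,<,<'}}=\bigcap_i I_{V(Q_i),V(\cR(P,<,<'))}.\] Since an ideal and its radical cut out the same zero set, the zero set of $\initial_w I_{P,<,<'}$ in $\bP(\bC^\cJ)$ is the union of the $\mathcal V(I_{V(Q_i),V(\cR(P,<,<'))})$. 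Applying the same identification to the pair $(P,<''_i,<')$ in place of $(P,<,<')$, the toric ideal $I_{V(Q_i)}$ coincides with $I_{P,<''_i,<'}$, so $\mathcal V(I_{V(Q_i),V(\cR(P,<,<'))})\subset\bP(\bC^\cJ)$ is exactly the image of $H(P,<''_i,<')\subset\bP(\bC^{\cJ(P,<''_i)})$ under the natural embedding induced by $\cJ(P,<''_i)\hookrightarrow\cJ$. Each such image is irreducible (being a toric variety), and distinct maximal parts of the subdivision yield distinct sublattices $\cJ(P,<''_i)\subset\cJ$ with none contained in another, so these are precisely the irreducible components.

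The substantive ingredients—that every part of $\Theta_{\cR(P,<,<')}(w)$ is itself a relative poset polytope (with the $*_{P,<'}$-closedness required by the definition) and that the cone of $w$ being considered matches the regime of Zhu's theorem—have both been handled in the preceding material (Theorem~\ref{partsrpp} and Proposition~\ref{secondarycone} respectively). The remaining work is therefore a straightforward ideal-theoretic translation, and I expect no further obstacle; the only point meriting care is consistently matching the toric ideal $I_{V(Q_i)}$ of each part, viewed as a lattice polytope, with the relative Hibi ideal $I_{P,<''_i,<'}$, which follows by applying the same argument used for $\cR(P,<,<')$ itself.
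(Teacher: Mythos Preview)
Your proposal is correct and follows essentially the same route as the paper: identify the Gr\"obner cone with the secondary cone (Proposition~\ref{secondarycone}), invoke Theorem~\ref{partsrpp} to know the parts are $\cR(P,<''_i,<')$, apply Zhu's theorem, and translate each $I_{V(Q_i),V(\cR(P,<,<'))}$ into the embedded relative Hibi variety. The only noteworthy difference is that the paper first observes (via Theorem~\ref{gfans}\ref{IJ'J}) that $\initial_w I_{P,<,<'}$ is radical, obtaining the actual ideal equality $\initial_w I_{P,<,<'}=\bigcap_i I_{V(Q_i),V(\cR(P,<,<'))}$, whereas you bypass this and work directly with the radical---which is perfectly adequate since the statement only concerns the zero set.
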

\begin{proof}
By Theorem~\ref{gfans}(d) the radical ideal $I^m_{P,<}$ is an initial ideal of $\initial_w I_{P,<,<'}$, hence the latter is also radical. Recall that $I_{P,<,<'}=I_{V(\cR(P,<,<'))}$ and thus by Theorem~\ref{zhu} we have \[\initial_w I_{P,<,<'}=\bigcap_{Q\in\Theta_{\cR(P,<,<')}(w)} I_{V(Q),V(\cR(P,<,<'))}.\] However, we have seen that every $Q\in\Theta_{\cR(P,<,<')}(w)$ has the form $\cR(P,<'',<')$ for some $<''$ stronger than $<$. The ideal $I_{V(Q),V(\cR(P,<,<'))}$ is generated by $I_{P,<'',<'}\subset\bC[\cJ(P,<'')]$ and the $X_J$ with $J\notin\cJ(P,<'')$. Consequently, the zero set of $I_{V(Q),V(\cR(P,<,<'))}$ is precisely $H(P,<'',<')\subset\bP(\bC^{\cJ(P,<'')})\subset\bP(\bC^\cJ)$.
\end{proof}

In other words, every Gr\"obner degeneration of the relative Hibi variety $H(P,<,<')$ which degenerates further into $M(P,<)$ is a semitoric variety the components of which are themselves relative Hibi varieties. In particular, in the special case when $<'$ is the trivial order, we have the following fact (which can, of course, be deduced without the use of relative poset polytopes).
\begin{cor}\label{ordercase}
For any $w\in\overline{C(I^h_{P,<},I^m_{P,<})}$ all irreducible components of the zero set of $\initial_w I_{P,<}$ in $\bP(\bC^\cJ)$ have the form $H(P,<'')$ for some $<''$ stronger than $<$.
\end{cor}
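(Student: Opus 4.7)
The plan is to obtain this as a direct specialization of Theorem~\ref{maindegen} with $<'$ taken to be the trivial order relation on $P$ (no two distinct elements comparable). The first step is to verify that all the auxiliary structures attached to the pair $(P,<,<')$ collapse to the corresponding structures of the single poset $(P,<)$ in this case, so that no new content is required beyond what Theorem~\ref{maindegen} already provides.

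Concretely, I would check the following equalities when $<'$ is trivial: since every subset of $P$ is an order ideal for $<'$, we have $\cJ(P,<')=2^P$ and the inclusion $\cJ=\cJ(P,<)\hookrightarrow\cJ(P,<')$ is automatic, while $\max_{<'}J=J$ for every $J$, so $*_{P,<'}=\cap$ and in particular $\cJ$ is closed under $*'$. Under these identifications the defining vertices $\one_{\max_{<'}J}$ of $\cR(P,<,<')$ become the indicator vectors $\one_J$, so $\cR(P,<,<')=\cO(P,<)$; the toric map $\varphi_{P,<,<'}$ in~\eqref{relkernel} becomes the Hibi map $\varphi$ of Proposition~\ref{kernel}, so $I_{P,<,<'}=I^h_{P,<}$ and $H(P,<,<')=H(P,<)$; and likewise, for any $<''$ stronger than $<$, $\cR(P,<'',<')=\cO(P,<'')$ and $H(P,<'',<')=H(P,<'')$.

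Having identified all the objects, the second step is simply to substitute into Theorem~\ref{maindegen}: the cone $\overline{C(I_{P,<,<'},I^m_{P,<})}$ becomes $\overline{C(I^h_{P,<},I^m_{P,<})}$, and the conclusion that every component of the zero set of $\initial_w I_{P,<,<'}$ has the form $H(P,<'',<')$ becomes the statement that every component of the zero set of $\initial_w I^h_{P,<}$ has the form $H(P,<'')$ for some $<''$ stronger than $<$, with the embedding $\bP(\bC^{\cJ(P,<'')})\subset\bP(\bC^\cJ)$ coming from the sublattice inclusion $\cJ(P,<'')\subset\cJ$.

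There is essentially no obstacle here: the corollary is a matter of unwinding definitions once Theorem~\ref{maindegen} is in hand. The only point that deserves a sentence of verification is the remark already made in the excerpt, namely that the trivial $<'$ satisfies the standing hypothesis that $\cJ$ is closed under $*'$, which in the trivial case reduces to the (obvious) fact that $\cJ$ is closed under intersection. Thus the proof consists of one line invoking Theorem~\ref{maindegen} preceded by the dictionary above.
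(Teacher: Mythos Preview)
Your proposal is correct and follows exactly the paper's approach: the paper states the corollary immediately after Theorem~\ref{maindegen} with the words ``in the special case when $<'$ is the trivial order,'' and your argument is simply a careful spelling-out of the dictionary that makes this specialization work. The identifications you record ($\max_{<'}J=J$, $*'=\cap$, $\cR(P,<,<')=\cO(P,<)$, $I_{P,<,<'}=I^h_{P,<}$, $H(P,<'',<')=H(P,<'')$) are precisely those implicit in the paper's one-line derivation.
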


\begin{remark}\label{permutahedra}
Let the order $<$ be trivial so that $\cJ$ is the power set of $P$ and $\cO(P,<)$ is the unit cube. Functions $u:\cJ\to\bR$ satisfying \[u(J_1)+u(J_2)\ge u(J_1\cap J_2)+u(J_1\cup J_2)\] are known as \emph{submodular functions}. Evidently, the set of submodular functions can be viewed as a polyhedral cone in $\bR^\cJ$ (known as the~\emph{submodular cone}) which coincides with $-\overline{C(I^h_{P,<},I^m_{P,<})}$. Submodular functions are in correspondence (\cite[Proposition 12]{MPSSW}) with an important family of polytopes known as \emph{generalized permutahedra} (\cite{P}). This correspondence can be described in terms of the above construction as follows. For a submodular function $u$ denote $w=-u\in \overline{C(I^h_{P,<},I^m_{P,<})}$. Let $\Theta_{\cO(P,<)}(w)$ consist of parts $\cO(P,<_i)$. Let $f$ be the convex function on $\cO(P,<)$ with all $f(\mathbf 1_J)=w_J$ and linear on every $\cO(P,<_i)$. The restriction of $f$ to $\cO(P,<_i)$ has the form $(\alpha_i,x)+w_\varnothing$ and the convex hull of all $-\alpha_i$ is the generalized permutahedron corresponding to $u$. Furthermore, using the notions of \emph{extended submodular functions} and \emph{extended generalized permutahedra} (see~\cite[Subsection 12.4]{AA}) one could generalize this observation to subdivisions of arbitrary order polytopes. It would be interesting to explore the relationship between the semitoric degeneration given by $w$ and the corresponding generalized permutahedron.
\end{remark}

\subsection{Degenerations of Grassmannians}\label{grassmannians}

Two well known constructions provide toric degenerations of Grassmannians, one of them is given by an order polytope while the other is given by a chain polytope. This means that we can apply the above results to obtain two families of semitoric degenerations for every Grassmannian, here we provide the details.

For this subsection fix natural numbers $k\le n$ and let $\Gr(k,n)$ denote the Grassmannian of $k$-dimensional subspaces in $\bC^n$. The Pl\"ucker embedding realizes $\Gr(k,n)$ as a subvariety in $\bP(\wedge^k\bC^n)$. Since the homogeneous coordinates in the latter space are enumerated by $k$-subsets in $\{1,\dots,n\}$, we can view $\Gr(k,n)$ as the zero set of a certain ideal \[\tilde I_k\subset\bC[\{X_{a_1,\dots,a_k}\}_{1\le a_1<\dots<a_k\le n}]=S_k,\] this is the corresponding \textbf{Pl\"ucker ideal}.

Consider also the ideal $\tilde I_k^h\subset S_k$ generated by the binomials \begin{equation}\label{glbinomial}
X_{a_1,\dots,a_k}X_{b_1,\dots,b_k}-X_{\min(a_1,b_1),\dots,\min(a_k,b_k)}X_{\max(a_1,b_1),\dots,\max(a_k,b_k)}.
\end{equation}
The following is (a special case of) one of the first results concerning flat degenerations of flag varieties.
\begin{theorem}[\cite{GL}]\label{gl}
The ideal $\tilde I_k^h$ is an initial ideal of $\tilde I_k$.
\end{theorem}

Next, let $(P_k,<)$ be the poset consisting of elements $p_{i,j}$ with $1\le i\le k$ and $k+1\le j\le n$ where $p_{i_1,j_1}<p_{i_2,j_2}$ if and only if $i_1\le i_2$ and $j_1\le j_2$. Consider the map $\psi_\cO$ from $S_k$ to $\bC[\cJ(P_k,<)]$ given by 
\begin{equation}\label{psiO}
\psi_\cO(X_{a_1,\dots,a_k})=X_{\{p_{i,j}\mid j\le a_{k+1-i}+i-1\}}.
\end{equation}
\begin{example}
Below is the Hasse diagram of the poset $(P_3,<)$ for $n=7$. Elements of the order ideal $J$ for which $\psi_\cO(X_{2,4,7})=X_J$ are colored cyan. 
\[
\begin{tikzcd}[row sep=.5mm,column sep=2.5mm]
&&{\color{cyan} p_{3,4}}\arrow[dr]\\
&{\color{cyan}p_{2,4}}\arrow[dr]\arrow[ur]&&  p_{3,5}\arrow[dr]\\
{\color{cyan}p_{1,4}}\arrow[dr]\arrow[ur]&&\color{cyan}p_{2,5}\arrow[dr]\arrow[ur] &&   p_{3,6}\arrow[dr]\\
&{\color{cyan}p_{1,5}}\arrow[ur]\arrow[dr]&&p_{2,6}\arrow[dr]\arrow[ur] && p_{3,7}\\
&&{\color{cyan}p_{1,6}}\arrow[dr]\arrow[ur]&&p_{2,7}\arrow[ur]\\
&&&\color{cyan}p_{1,7}\arrow[ur]
\end{tikzcd}
\]
\end{example}

\begin{proposition}\label{psiOisom}
$\psi_\cO$ is an isomorphism and $\psi_\cO(\tilde I_k^h)=I^h_{P_k,<}$.
\end{proposition}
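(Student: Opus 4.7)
The plan is to first establish that the map on variables, $X_{a_1,\dots,a_k}\mapsto X_{J_a}$ with $J_a=\{p_{i,j}\mid j\le a_{k+1-i}+i-1\}$, is a bijection between the set of strictly increasing $k$-tuples in $\{1,\dots,n\}$ and $\cJ(P_k,<)$; this will automatically upgrade $\psi_\cO$ to a ring isomorphism since source and target are polynomial rings on these variable sets. Setting $b_i(a)=a_{k+1-i}+i-1$, the strict monotonicity $a_1<\dots<a_k$ forces the sequence $b_1(a)\ge\dots\ge b_k(a)$, which is exactly the combinatorial condition for $J_a$ to be an order ideal of the staircase poset $(P_k,<)$. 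The inverse map sends an ideal $J$ to the tuple $a$ with $a_{k+1-i}=b_i-i+1$, where $b_i=\max\{j\mid p_{i,j}\in J\}$ (or $b_i=k$ if no such $j$ exists); the ideal property of $J$ then translates back into strict monotonicity of $a$. A sanity check via cardinalities ($\binom{n}{k}$ equal on both sides) confirms the count.

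Next, the key calculation for the second assertion is that this bijection intertwines the coordinatewise $\min$ and $\max$ on $k$-tuples with intersection and union on $\cJ(P_k,<)$:
\[
J_{\max(a,b)}=J_a\cup J_b,\qquad J_{\min(a,b)}=J_a\cap J_b.
\]
Both equalities follow immediately from the coordinatewise monotonicity of the formula $b_i(a)=a_{k+1-i}+i-1$, which gives $b_i(\max(a,b))=\max(b_i(a),b_i(b))$ and similarly for $\min$.

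Combining the two, the generator of $\tilde I_k^h$ from \eqref{glbinomial} indexed by two $k$-tuples $a,b$ is sent by $\psi_\cO$ to the binomial $X_{J_a}X_{J_b}-X_{J_a\cap J_b}X_{J_a\cup J_b}$, which is precisely the Hibi generator \eqref{hibibinomial} attached to the pair $(J_a,J_b)$. Since $\psi_\cO$ is a ring isomorphism bijecting one set of generators onto the other, $\psi_\cO(\tilde I_k^h)=I^h_{P_k,<}$ follows. I do not anticipate a serious obstacle: the only nontrivial content is the combinatorial bijection of the first paragraph, which is essentially the classical correspondence between $k$-subsets of $\{1,\dots,n\}$ and Young diagrams fitting in a $k\times(n-k)$ box, realized explicitly through this poset.
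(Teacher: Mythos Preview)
Your proof is correct and follows essentially the same approach as the paper: you construct the inverse of the variable bijection via the ``column heights'' $b_i$ (with the convention $b_i=k$ when the $i$th column is empty, a case the paper glosses over), and then verify that the generator~\eqref{glbinomial} is sent to the Hibi generator~\eqref{hibibinomial} by checking that the correspondence $a\mapsto J_a$ intertwines coordinatewise $\min/\max$ with $\cap/\cup$. The paper states the latter as a ``straightforward fact'' without writing out the $b_i(\max(a,b))=\max(b_i(a),b_i(b))$ computation, but the content is the same.
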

\begin{proof}
For an order ideal $J$ in $(P_k,<)$ and $i\in[1,k]$ let $c_i$ equal the largest $j$ such that $p_{i,j}\in J$. Than $c_1,\dots,c_k$ is a weakly decreasing sequence of integers in $[k+1,n]$ and the numbers $b_i=c_i-(i-1)$ form a strictly decreasing sequence with $\psi_\cO(X_{b_k,\dots,b_1})=X_J$. It is also evident that any such strictly decreasing sequence can be obtained from some ideal $J$, i.e.\ $\psi_\cO$ is a bijection between the variable sets.

The second claim follows from the straightforward fact that $\psi_\cO$ maps the binomial~\eqref{glbinomial} to the generator~\eqref{hibibinomial} of the Hibi ideal with $J_1=\{p_{i,j}|j\le a_{k+1-i}+i-1\}$ and $J_2=\{p_{i,j}|j\le b_{k+1-i}+i-1\}$.
\end{proof} 

The observation that the Hibi ideal of $(P_k,<)$ is the toric ideal of Gonciulea--Lakshmibai is due to~\cite{FL}. In particular, this means that the flat degeneration provided by Theorem~\ref{gl} is the toric variety of $\cO(P_k,<)$ (which can also be viewed as the Gelfand--Tsetlin polytope of a fundamental $\mathfrak{gl}_n$-weight, see~\cite{KM}). The proposition also implies that $\tilde I_k^m=\psi_\cO^{-1}(I^m_{P_k,<})$ is an initial ideal of $\tilde I_k^h$ and, subsequently, of $\tilde I_k$. From Corollary~\ref{ordercase} we now obtain
\begin{theorem}\label{grass1}
Let $\tilde I\subset S_k$ be an initial ideal of $\tilde I_k^h$ such that $\tilde I_k^m$ is an initial ideal of $\tilde I$. Then the zero set of $\tilde I$ in $\bP(\wedge^k\bC^n)$ is a flat degeneration of $\Gr(k,n)$ such that all of its irreducible components are Hibi varieties of the form $H(P_k,<'')$ with $<''$ stronger than $<$.
\end{theorem}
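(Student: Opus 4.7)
The plan is to transport the question to the Hibi-variety side via the ring isomorphism $\psi_\cO$ of Proposition~\ref{psiOisom} and then apply Corollary~\ref{ordercase}. Before doing so, I will verify that the zero set of $\tilde I$ is genuinely a flat degeneration of $\Gr(k,n)$. Theorem~\ref{gl} writes $\tilde I_k^h = \initial_u \tilde I_k$ for some $u$, and by hypothesis $\tilde I = \initial_v \tilde I_k^h$ for some $v$; the standard fact that initial ideals compose (i.e.\ $\initial_v(\initial_u I) = \initial_{u+\epsilon v} I$ for sufficiently small $\epsilon > 0$) exhibits $\tilde I$ as an initial ideal of $\tilde I_k$. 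Since $\tilde I_k^m$ is squarefree-monomial and hence radical, and is itself an initial ideal of $\tilde I$, the ideal $\tilde I$ is radical. Corollary~\ref{gdegen} then supplies the desired flat family from $\Gr(k,n)$ to the zero set of $\tilde I$.

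Next I will transfer the initial-ideal data across $\psi_\cO$. Setting $I = \psi_\cO(\tilde I) \subset \bC[\cJ(P_k,<)]$, Proposition~\ref{psiOisom} gives $\psi_\cO(\tilde I_k^h) = I^h_{P_k,<}$, and a direct check on generators shows $\psi_\cO(\tilde I_k^m) = I^m_{P_k,<}$. Hence $I^m_{P_k,<}$ is an initial ideal of $I$, which in turn is an initial ideal of $I^h_{P_k,<}$. By Theorem~\ref{gfans}\ref{IJ'J}, applied to the Gr\"obner fan of $I^h_{P_k,<}$, this means that $\overline{C(I^h_{P_k,<}, I)}$ is a face of $\overline{C(I^h_{P_k,<}, I^m_{P_k,<})}$; in particular some $w$ in the closed cone $\overline{C(I^h_{P_k,<}, I^m_{P_k,<})}$ satisfies $\initial_w I^h_{P_k,<} = I$.

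At this point Corollary~\ref{ordercase} applies verbatim and describes the irreducible components of the zero set of $I$ in $\bP(\bC^{\cJ(P_k,<)})$ as Hibi varieties $H(P_k, <'')$ with $<''$ stronger than $<$. The final step is purely translational: the ring isomorphism $\psi_\cO$ of~\eqref{psiO} induces a linear isomorphism between $\bP(\wedge^k\bC^n)$ and $\bP(\bC^{\cJ(P_k,<)})$ that carries the zero set of $\tilde I$ onto the zero set of $I$ and therefore preserves the decomposition into irreducible components, transporting the conclusion back to the ambient Pl\"ucker space.

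I expect the only non-routine point to be the cone-membership argument in the middle step: one must be careful to argue not only that $\initial_w I^h_{P_k,<} = I$ for some $w$, but that such a $w$ can be chosen inside $\overline{C(I^h_{P_k,<}, I^m_{P_k,<})}$, since this is what Corollary~\ref{ordercase} requires as input. This is precisely what Theorem~\ref{gfans}\ref{IJ'J} delivers, so the difficulty is really one of accurately marshaling the definitions rather than proving anything new.
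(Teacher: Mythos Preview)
Your proof is correct and follows the same approach as the paper: transport the problem via the isomorphism $\psi_\cO$ and apply Corollary~\ref{ordercase}. You spell out in detail several points the paper leaves implicit (radicality of $\tilde I$, the flat-degeneration claim via composition of initial ideals, and the cone-membership verification through Theorem~\ref{gfans}\ref{IJ'J}), but the underlying argument is identical; note also that $\psi_\cO(\tilde I_k^m)=I^m_{P_k,<}$ holds by definition rather than by a check on generators.
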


It was later realized that the chain polytope $\cC(P_k,<)$ (which can be viewed as the Feigin--Fourier--Littelmann--Vinberg polytope of a fundamental weight, see~\cite{FFL1}) also provides a toric degeneration of the Grassmannian. Indeed, let us define a different isomorphism from $S_k$ to $\bC[\cJ(P_k,<)]$. First, let $\alpha_1,\dots,\alpha_k$ be pairwise distinct integers in $[1,n]$ and let $\sigma$ be a permutation of $[1,k]$. We use the standard notation \[X_{\alpha_1,\dots,\alpha_k}=(-1)^\sigma X_{\alpha_{\sigma(1)},\dots,\alpha_{\sigma(k)}} \in S_k\] to define $X_{\alpha_1,\dots,\alpha_k}$ with not necessarily increasing subscripts. Now choose $J\in\cJ(P_k,<)$ and let $p_{i_1,j_1},\dots,p_{i_l,j_l}$ be the maximal elements in $J$. Define $\alpha_1,\dots,\alpha_k$ by setting $\alpha_{i_r}=j_r$ for $r\in[1,l]$ and setting $\alpha_i=i$ for all other $i$. We set \[\psi_\cC(X_{\alpha_1,\dots,\alpha_k})=X_J.\] It is obvious that distinct $J$ give distinct sets $\{\alpha_1,\dots,\alpha_k\}$ and we have already seen that $|\cJ(P_k,<)|={n\choose k}$, hence $\psi_\cC$ is indeed an isomorphism from $S_k$ to $\bC[\cJ(P_k,<)]$.
\begin{example}
Below elements of the order ideal $J$ for which $\psi_\cC(X_{7,6,3})=X_J$ are colored cyan. 
\[
\begin{tikzcd}[row sep=.5mm,column sep=2.5mm]
&&p_{3,4}\arrow[dr]\\
&{\color{cyan}p_{2,4}}\arrow[dr]\arrow[ur]&&  p_{3,5}\arrow[dr]\\
{\color{cyan}p_{1,4}}\arrow[dr]\arrow[ur]&&\color{cyan}p_{2,5}\arrow[dr]\arrow[ur] &&   p_{3,6}\arrow[dr]\\
&{\color{cyan}p_{1,5}}\arrow[ur]\arrow[dr]&&\color{cyan}p_{2,6}\arrow[dr]\arrow[ur] && p_{3,7}\\
&&{\color{cyan}p_{1,6}}\arrow[dr]\arrow[ur]&&p_{2,7}\arrow[ur]\\
&&&\color{cyan}p_{1,7}\arrow[ur]
\end{tikzcd}
\]
\end{example}

Let $\tilde I_k^{hl}\subset S_k$ denote the ideal $\psi_\cC^{-1}(I^{hl}_{P_k,<})$. The following fact is originally due to~\cite{FFL2}, for a form closer to the below, see~\cite[Theorem 5.1]{fffm}.
\begin{theorem}
The ideal $\tilde I_k^{hl}$ is an initial ideal of $\tilde I_k$.
\end{theorem}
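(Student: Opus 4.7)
The plan is to exhibit a weight vector $w \in \bR^{S_k}$ for which $\initial_w \tilde I_k = \tilde I_k^{hl}$. Both sides have the same graded pieces, so it will suffice to show the containment $\tilde I_k^{hl} \subseteq \initial_w \tilde I_k$ and then close the argument with a Hilbert series comparison. This mimics the strategy used earlier in the paper to establish that $I_{P,<,<'}$ has $I^m_{P,<}$ as an initial ideal (Theorem~\ref{quadratic}).

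For the weight, I would transport a weight of the form $w_J = -|P_k \setminus J|^2$ on $\bC[\cJ(P_k,<)]$ back to $S_k$ via $\psi_\cC$: set $\tilde w(X_{\alpha_1,\dots,\alpha_k}) = w_J$, where $J = \psi_\cC(X_{\alpha_1,\dots,\alpha_k})$. The calculation following Theorem~\ref{quadratic} shows that, for this weight, each Hibi-Li binomial $X_{J_1} X_{J_2} - X_{J_1 \cup J_2} X_{J_1 *' J_2}$ (with $J_1, J_2$ incomparable) has $X_{J_1} X_{J_2}$ as its initial term. So under $\psi_\cC^{-1}$ each generator of $\tilde I_k^{hl}$ is the $\tilde w$-initial part of an element coming from the Plücker ideal. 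To verify this last point, I would take a short (three-term) Grassmann--Plücker relation and match its summands against a Hibi-Li relation via $\psi_\cC$: given a pair of incomparable ideals $J_1, J_2 \in \cJ(P_k,<)$, read off the corresponding $k$-subsets, identify the Plücker relation whose leading monomial is $X_{J_1}X_{J_2}$, and check that the unique surviving opposite term is $X_{J_1 \cup J_2} X_{J_1 *' J_2}$.

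Once this containment is established, I would close with Hilbert series. By Theorem~\ref{gl} and Proposition~\ref{psiOisom}, $\tilde I_k^h = \psi_\cO^{-1}(I^h_{P_k,<})$ is an initial ideal of $\tilde I_k$, so the Hilbert series of $S_k / \tilde I_k$ coincides with that of $\bC[\cJ(P_k,<)]/I^h_{P_k,<}$, which is the Ehrhart series of $\cO(P_k,<)$. By Lemma~\ref{ehrhart}, the chain polytope $\cC(P_k,<)$ shares this Ehrhart series, so $\bC[\cJ(P_k,<)]/I^{hl}_{P_k,<}$, and hence $S_k / \tilde I_k^{hl}$, has the same Hilbert series as $S_k / \tilde I_k$ and thus also as $S_k / \initial_{\tilde w} \tilde I_k$. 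Combined with $\tilde I_k^{hl} \subseteq \initial_{\tilde w}\tilde I_k$, the equality of graded dimensions forces $\tilde I_k^{hl} = \initial_{\tilde w}\tilde I_k$.

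The main obstacle is the combinatorial step of matching a generator of $\tilde I_k^{hl}$ with a genuine Plücker relation whose $\tilde w$-initial part is precisely that generator. The Hibi-Li operation $*'$ is subtler than the lattice operations $\cup, \cap$ appearing in straightening relations, so one must carefully track how the maximal antichains of $J_1$ and $J_2$ translate back under $\psi_\cC$ into the two index tuples appearing in a three-term Plücker syzygy (in particular, how the signs coming from the antisymmetric convention $X_{\alpha_{\sigma(1)},\dots,\alpha_{\sigma(k)}} = (-1)^\sigma X_{\alpha_1,\dots,\alpha_k}$ match up). Once this bookkeeping is in hand, the remaining Hilbert series argument is immediate from the material already developed in this section; alternatively, one may simply invoke \cite{FFL2} or \cite[Theorem 5.1]{fffm}, where exactly this degeneration is constructed.
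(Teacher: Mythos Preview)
The paper does not prove this statement; it simply cites \cite{FFL2} and \cite[Theorem~5.1]{fffm}. Your final sentence acknowledges this option, and that is in fact all the paper does.

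Your proposed direct argument, however, has a genuine gap. The weight $w_J=-|P_k\setminus J|^2$ (in either sign) assigns \emph{different} weights to the two monomials of every nontrivial Hibi--Li binomial: by~\eqref{squaresineq} one has $|P\setminus J_1|^2+|P\setminus J_2|^2<|P\setminus(J_1\cup J_2)|^2+|P\setminus(J_1*'J_2)|^2$ whenever $J_1,J_2$ are incomparable. Consequently the initial part of such a binomial with respect to $\tilde w$ is a single monomial, and more importantly, \emph{no} element of $\tilde I_k$ can have this binomial as its $\tilde w$-initial part, because an initial part is by definition $\tilde w$-homogeneous. So with this weight you cannot hope to realize the generators of $\tilde I_k^{hl}$ as initial forms of Pl\"ucker relations, and the containment $\tilde I_k^{hl}\subseteq\initial_{\tilde w}\tilde I_k$ fails to get off the ground.

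Any weight $w$ with $\initial_w\tilde I_k=\tilde I_k^{hl}$ must make every Hibi--Li binomial $w$-homogeneous, i.e.\ satisfy $w_{J_1}+w_{J_2}=w_{J_1\cup J_2}+w_{J_1*'J_2}$. In view of~\eqref{stardef} this forces $w_J$ to be (up to a constant) a linear functional evaluated at the vertex $\mathbf 1_{\max_< J}\in\bR^{P_k}$. The construction in~\cite{fffm} indeed uses a weight of this shape, and the verification that the resulting initial ideal is exactly $\tilde I_k^{hl}$ is not a simple matching against three-term Pl\"ucker relations (which exist only for very special index pairs) but goes through PBW-type monomial bases; your Hilbert-series closure would still be needed, and is correct once the containment is in hand, but the containment step itself is substantially harder than you indicate.
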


Similarly to the above, we can now apply Theorem~\ref{maindegen} and obtain
\begin{theorem}\label{grass2}
Let $\tilde I\subset S_k$ be an initial ideal of $\tilde I_k^{hl}$ such that $\tilde I_k^m$ is an initial ideal of $\tilde I$. Then the zero set of $\tilde I$ in $\bP(\wedge^k\bC^n)$ is a flat degeneration of $\Gr(k,n)$ such that all of its irreducible components are relative Hibi varieties of the form $H(P_k,<'',<)$.
\end{theorem}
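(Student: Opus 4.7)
The plan is to reduce Theorem \ref{grass2} to Theorem \ref{maindegen} via the isomorphism $\psi_\cC: S_k \to \bC[\cJ(P_k,<)]$, in complete parallel to how Theorem \ref{grass1} follows from Corollary \ref{ordercase} via $\psi_\cO$. The key observation is that taking the weaker order $<'$ in Theorem \ref{maindegen} to be $<$ itself yields $\cR(P_k,<,<) = \cC(P_k,<)$ and $I_{P_k,<,<} = I^{hl}_{P_k,<}$, so that Theorem \ref{maindegen} in this special case describes exactly those Gr\"obner degenerations of the Hibi--Li variety that degenerate further into the monomial variety $M(P_k,<)$, identifying the irreducible components as relative Hibi varieties $H(P_k,<'',<)$.

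First I would transport the ideals to the lattice side. By definition $\psi_\cC(\tilde I_k^{hl}) = I^{hl}_{P_k,<}$, and the monomial ideal relevant on the lattice side is $I^m_{P_k,<}$. The hypothesis that $\tilde I$ is an initial ideal of $\tilde I_k^{hl}$ admitting $\tilde I_k^m$ as a further initial ideal translates, via Theorem \ref{gfans}\ref{IJ'J}, into the existence of a weight vector $w \in \bR^{\cJ(P_k,<)}$ satisfying $\initial_w I^{hl}_{P_k,<} = \psi_\cC(\tilde I)$ and $w \in \overline{C(I^{hl}_{P_k,<}, I^m_{P_k,<})}$.

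Next I would invoke Theorem \ref{maindegen} with $(P,<,<') = (P_k,<,<)$: every irreducible component of the zero set of $\initial_w I^{hl}_{P_k,<}$ in $\bP(\bC^{\cJ(P_k,<)})$ is of the form $H(P_k,<'',<)$ for some order $<''$ on $P_k$ stronger than $<$, embedded via the inclusion $\cJ(P_k,<'') \subset \cJ(P_k,<)$. Pulling back along the projective isomorphism induced by $\psi_\cC^{-1}$, the components of the zero set of $\tilde I$ in $\bP(\wedge^k\bC^n)$ are precisely these relative Hibi varieties, giving the required description. Flatness of the degeneration of $\Gr(k,n)$ to the zero set of $\tilde I$ then follows by transitivity of Gr\"obner degenerations: Corollary \ref{gdegen} applied twice yields first a flat degeneration of $\Gr(k,n)$ to the zero set of $\tilde I_k^{hl}$ (by the FFLV-type result cited before the theorem), and then of the latter to the zero set of $\tilde I$ by the definition of initial ideal.

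Since the conceptual content is entirely encoded in Theorem \ref{maindegen}, the argument is essentially a translation between the Grassmannian and the lattice side, and the main obstacle, such as it is, is purely notational. Specifically, the two isomorphisms $\psi_\cO$ and $\psi_\cC$ induce \emph{different} identifications of $S_k$ with $\bC[\cJ(P_k,<)]$, so the symbol $\tilde I_k^m$ in the statement of Theorem \ref{grass2} must be interpreted consistently with the $\psi_\cC$-picture used here rather than with the $\psi_\cO$-picture used for Theorem \ref{grass1}; with this understanding no further difficulty arises.
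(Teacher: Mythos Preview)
Your proposal is correct and follows exactly the approach indicated in the paper, which simply states ``Similarly to the above, we can now apply Theorem~\ref{maindegen}'' and leaves the details implicit. You have correctly identified the specialization $<'=<$ that makes $I_{P_k,<,<}=I^{hl}_{P_k,<}$, and your final remark about the notational ambiguity of $\tilde I_k^m$ is well-observed: the paper defines $\tilde I_k^m=\psi_\cO^{-1}(I^m_{P_k,<})$ before Theorem~\ref{grass1} and never redefines it, but for Theorem~\ref{grass2} one indeed needs $\psi_\cC^{-1}(I^m_{P_k,<})$, as is confirmed by the later Subsection~\ref{flags} where two distinct monomial ideals $\tilde I^m_1$ and $\tilde I^m_2$ are introduced for the two cases.
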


This theorem can be said to make essential use of the notion of relative poset polytopes unlike Theorem~\ref{grass1}. That is since in Theorem~\ref{grass2} we may obtain irreducible components which are neither Hibi varieties nor Hibi--Li varieties but more general relative Hibi varieties (see also Example~\ref{notmcop}).

\section{Marked relative poset polytopes}\label{marked}

\subsection{Definition and main properties}

In this section we consider a finite poset $(P,<)$ with a subset $P^*\subset P$ of \textit{marked elements} which contains all minimal and maximal elements of $(P,<)$. We also choose $\la\in\bZ^{P^*}$ such that for elements $p<q$ of $P^*$ we have $\la_p\ge \la_q$. In general, we will refer to elements of $\bZ^{P^*}$ as \textit{markings} and to those satisfying the above inequalities as \textit{dominant markings}. We call markings with only nonnegative coordinates \textit{nonnegative}. 

We also choose an order $<'$ on $P$ with the following three properties:
\begin{enumerate}[label=(\roman*)]\label{properties}
\item $<'$ is weaker than $<$,
\item $\cJ=\cJ(P,<)$ is closed under $*_{P,<'}$,
\item $p\not<' q$ for any $p\in P^*$ and $q\in P$ (in particular, all elements of $P^*$ are pairwise incomparable).
\end{enumerate}

\begin{remark}
We point out that the newly added property (iii) is easily satisfied: one may choose any order with properties (i) and (ii) (so of the type considered in the previous section) and then simply remove all relations contradicting (iii).
\end{remark}

The above data defines a marked relative poset polytope, however, we first consider the case when all coordinates of $\la$ are 0 or 1, i.e.\ $\la$ has the form $\mathbf 1_K$ for some $K\in\cJ^*=\cJ(P^*,<)$. We call markings of this form \textit{fundamental} and denote them $\omega_K=\mathbf 1_K$.
\begin{definition}\label{funddef}
For $K\in\cJ^*$ the \textit{fundamental marked relative poset polytope}\linebreak (\textit{fundamental MRPP}) $\cR_{\omega_K}(P,<,<')\subset\bR^P$ is the convex hull of points $\mathbf 1_{\max_{<'}J}$ for $J\in\cJ$ such that $J\cap P^*=K$.
\end{definition}
The first thing to note is that all points $x\in\cR_{\omega_K}(P,<,<')$ satisfy $x_p=(\om_K)_p$ for all $p\in P^*$. Also, $\cR_{\omega_K}(P,<,<')$ is always nonempty because we can always choose $J$ to be the order ideal in $(P,<)$ generated by $K$. In the extremal cases we obtain polytopes consisting of a single point: $\cR_{\omega_\varnothing}(P,<,<')=\{0\}$ and $\cR_{\omega_{P^*}}(P,<,<')=\{\mathbf 1_{\max_{<'}P}\}$, since $P^*$ contains all the minimal and maximal elements. 
\begin{example}\label{fundexample}
Every relative poset polytope $\cR(Q,\ll,\ll')$ can be identified with a fundamental MRPP. Let $P=Q\cup\{p_0,p_1\}$ and let $<$ coincide with $\ll$ on $Q$ while $p_0$ is the unique minimal element and $p_1$ is the unique maximal element in $(P,<)$. Furthermore, let $<'$ coincide with $\ll'$ on $Q$ while $p_0$ and $p_1$ are $<'$-incomparable with all other elements. Set $P^*=\{p_0,p_1\}$ and $K=\{p_0\}$. Then one sees that the projection from $\bR^P$ to $\bR^Q$ identifies $\cR_{\om_K}(P,<,<')$ with $\cR(Q,\ll,\ll')$.

In fact, it is also true that any fundamental MRPP can be identified with a relative poset polytope. For $K\in\cJ^*$ let $P_0\subset P$ consist of $p$ for which there is no $q\in K$ with $p\le q$ and no $q\in P^*\backslash K$ with $q\le p$. Then one may check that the projection from $\bR^P$ to $\bR^{P_0}$ identifies $\cR_{\omega_K}(P,<,<')$ with $\cR(P_0,<,<')$. This shows that the notions of fundamental MRPPs and of relative poset polytopes are essentially equivalent.
\end{example}

We have the following characterization equivalent to the definition.
\begin{proposition}\label{fundassection}
$\cR_{\omega_K}(P,<,<')$ is the intersection of $\cR(P,<,<')$ with the affine subspace $U_K$ of points $x$ with $x_p=(\omega_K)_p$ for all $p\in P^*$. This intersection is a face of $\cR(P,<,<')$. 
\end{proposition}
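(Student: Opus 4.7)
The plan is to exhibit an explicit linear functional on $\bR^P$ whose maximum on $\cR(P,<,<')$ is attained precisely on $U_K\cap\cR(P,<,<')$. This will simultaneously identify the intersection as a face and as the convex hull described in Definition~\ref{funddef}.

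First I would record the key consequence of property (iii): since no $p\in P^*$ is $<'$-below any element of $P$, every such $p$ lying in some $J\in\cJ$ is automatically $<'$-maximal in $J$. Therefore the restriction of the vertex $\mathbf 1_{\max_{<'}J}$ of $\cR(P,<,<')$ to the coordinates indexed by $P^*$ is exactly $\mathbf 1_{J\cap P^*}$. In particular $\mathbf 1_{\max_{<'}J}\in U_K$ if and only if $J\cap P^*=K$, so the vertices of $\cR(P,<,<')$ lying in $U_K$ are precisely those generating $\cR_{\omega_K}(P,<,<')$ in Definition~\ref{funddef}.

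Next I would consider the linear functional
\[
\ell(x)=\sum_{p\in K}x_p-\sum_{p\in P^*\setminus K}x_p
\]
on $\bR^P$. Evaluated on a vertex, $\ell(\mathbf 1_{\max_{<'}J})=|K\cap J|-|(J\cap P^*)\setminus K|\le|K|$, with equality if and only if $K\subseteq J$ and $J\cap P^*\subseteq K$, i.e.\ exactly when $J\cap P^*=K$. Meanwhile $\ell\equiv|K|$ on the entire affine subspace $U_K$. Hence the face $F=\{x\in\cR(P,<,<'):\ell(x)=|K|\}$ is, by standard convex geometry, the convex hull of those vertices where $\ell$ achieves its maximum, so $F=\cR_{\omega_K}(P,<,<')$. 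For the remaining equality, if $x\in\cR(P,<,<')\cap U_K$ then $\ell(x)=|K|$ shows $x\in F$; conversely any $x\in F$ is a convex combination of vertices $\mathbf 1_{\max_{<'}J}$ with $J\cap P^*=K$, each of which lies in $U_K$, so $x\in U_K$.

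I do not expect any serious obstacle here; the only substantive input is property (iii), which ensures that the $P^*$-coordinates of a vertex of $\cR(P,<,<')$ are determined by $J\cap P^*$ rather than by $\max_{<'}J\cap P^*$. Without this observation the sections $U_K$ would cut $\cR(P,<,<')$ in a more complicated way, and the explicit supporting functional $\ell$ above would not exist.
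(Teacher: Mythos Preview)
Your proof is correct and follows essentially the same approach as the paper: both use the supporting hyperplane given by the linear functional $\ell(x)=\sum_{p\in K}x_p-\sum_{p\in P^*\setminus K}x_p$ attaining the value $|K|$. Your version is slightly more explicit in spelling out how property~(iii) forces $\max_{<'}J\cap P^*=J\cap P^*$ and in verifying the inequality $\ell(\mathbf 1_{\max_{<'}J})\le|K|$ vertex by vertex, whereas the paper simply asserts that $H$ is supporting; otherwise the arguments coincide.
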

\begin{proof}
Consider the hyperplane \[H=\Big\{x\in\bR^P|\sum_{p\in K}x_p-\sum_{p\in P^*\backslash K}x_p=|K|\Big\}.\] Evidently, $H$ is a supporting hyperplane of $\cR(P,<,<')$ and \[H\cap\cR(P,<,<')=U_K\cap \cR(P,<,<').\] This shows that $U_K\cap \cR(P,<,<')$ is a face of the latter. The only vertices of $\cR(P,<,<')$ contained in $H$ are $\mathbf 1_{\max_{<'}J}$ for $J\in\cJ$ such that $J\cap P^*=K$. This shows that \[U_K\cap \cR(P,<,<')=\cR_{\omega_K}(P,<,<').\qedhere\]
\end{proof}

The next statement is needed to generalize the definition to general $\la$.
\begin{proposition}\label{markingdecomp}
Let $\mu$ be a nonnegative dominant marking. Then there exists a unique decomposition of the form $\mu=\sum_{i=1}^m \alpha_i\om_{K_i}$ where $\alpha_i\in\bZ_{\ge0}$ and $K_i\in\cJ^*$ satisfy \[\varnothing\neq K_1\subsetneq\dots\subsetneq K_m.\]
\end{proposition}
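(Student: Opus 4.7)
The plan is to recover the decomposition directly from the level sets of $\mu$. Let $a_1 > a_2 > \dots > a_m > 0$ be the distinct positive values taken by $\mu$, and set $a_{m+1} = 0$. For each $i \in [1,m]$ define
\[
K_i = \{p \in P^* : \mu_p \geq a_i\}, \qquad \alpha_i = a_i - a_{i+1}.
\]
The first step is to check that each $K_i$ lies in $\cJ^*$: if $q \in K_i$ and $p < q$ in $P^*$, then dominance gives $\mu_p \geq \mu_q \geq a_i$, so $p \in K_i$. The inclusions $\varnothing \neq K_1 \subsetneq \dots \subsetneq K_m$ are immediate from $a_1 > \dots > a_m > 0$, and each $\alpha_i$ is a positive integer.

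Next I would verify the equality $\mu = \sum_{i=1}^m \alpha_i \omega_{K_i}$ pointwise. For $p \in P^*$ with $\mu_p = a_j$ (taking $j = m+1$ when $\mu_p = 0$), one has $p \in K_i$ iff $i \geq j$, so
\[
\sum_{i=1}^m \alpha_i (\omega_{K_i})_p = \sum_{i=j}^m (a_i - a_{i+1}) = a_j - a_{m+1} = \mu_p.
\]
This establishes existence.

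For uniqueness, suppose $\mu = \sum_{i=1}^m \alpha_i \omega_{K_i}$ is any decomposition of the required form. Setting $b_j = \sum_{i=j}^m \alpha_i$ we get $b_1 > b_2 > \dots > b_m > 0$ since each $\alpha_i > 0$. For $p \in P^*$, writing $j(p)$ for the minimal index with $p \in K_{j(p)}$ (or $m+1$ if no such index exists), the computation above shows $\mu_p = b_{j(p)}$ with the convention $b_{m+1} = 0$. Hence the set of distinct positive values of $\mu$ is exactly $\{b_1, \dots, b_m\}$, which forces $m$ and the $b_j$'s to equal the previously defined $a_j$'s, and then $K_j = \{p : \mu_p \geq b_j\}$ recovers the $K_j$'s, with $\alpha_j = b_j - b_{j+1}$ recovering the coefficients.

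No real obstacle is expected; the only point that requires (brief) attention is the verification that the level sets $\{p : \mu_p \geq a_i\}$ are genuinely order ideals of $(P^*, <)$, which is precisely where the dominance hypothesis on $\mu$ is used. The rest is a routine telescoping argument.
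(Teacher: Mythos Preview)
Your proof is correct and takes a more elementary, self-contained route than the paper. The paper observes that for $M=\max_p\mu_p$ the vector $\mu$ is an integer point of $M\cO(P^*,<)$ and then invokes Corollary~\ref{inctuple} (the unique decomposition of integer points in a dilated relative poset polytope into a weakly increasing chain of indicator functions), collecting like terms afterward. You instead construct the $K_i$ directly as the superlevel sets $\{p:\mu_p\ge a_i\}$ and verify everything by a telescoping computation; this is essentially unwinding the order-polytope special case of that corollary by hand. The paper's proof is a two-line application of machinery already developed, while yours is independent of that machinery and makes the role of the dominance hypothesis completely explicit. One small remark: in the uniqueness half you assume each $\alpha_i>0$ to obtain $b_1>\dots>b_m>0$, whereas the proposition literally allows $\alpha_i\in\bZ_{\ge0}$; of course uniqueness fails trivially if zero coefficients are permitted, so this is really an imprecision in the statement rather than a gap in your argument, and the paper's proof makes the same tacit assumption.
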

\begin{proof}
Set $M=\max_p\mu_p$. Then $\mu$ is an integer point in the dilated order polytope $M\cO(P^*,<)$ and the desired decomposition and its uniqueness are obtained from Proposition~\ref{inctuple} by collecting like terms.
\end{proof}
We will denote the coefficient of $\om_K$ in the above decomposition by $\alpha_K(\mu)$. Recall that $\la$ was chosen to be dominant.
\begin{definition}\label{markeddef}
If $\la$ is nonnegative, then the \textit{marked relative poset polytope} (or \textit{MRPP}) $\cR_\la(P,<,<')$ is the Minkowski sum 
\[
\sum_{K\in\cJ^*} \alpha_K(\la)\cR_{\om_K}(P,<,<').
\]
If $\la$ has negative coordinates, choose an integer $\beta$ such that all coordinates of $\la+\beta\om_{P^*}$ are nonnegative and define \[\cR_\la(P,<,<')=\cR_{\la+\beta\om_{P^*}}(P,<,<')-\beta\mathbf 1_{\max_{<'}P}.\]
\end{definition}

To see that in the second case the polytope does not depend on the choice of $\beta$, consider two nonnegative markings $\mu_1$ and $\mu_2$ with $\mu_1-\mu_2=\gamma\om_{P^*}$ for some $\gamma\in\bZ$. We have $\alpha_{P^*}(\mu_1)-\alpha_{P^*}(\mu_2)=\gamma$ while $\alpha_K(\mu_1)=\alpha_K(\mu_2)$ for all other $K$. The independence now follows from $\cR_{\om_{P^*}}(P,<,<')=\{\mathbf 1_{\max_{<'}P}\}$.

Again, one notices that $x_p=\la_p$ for all $x\in\cR(P,<,<')$ and $p\in P^*$. We can generalize Proposition~\ref{fundassection} by describing general MRPPs as affine sections. We give such a description in the case of nonnegative $\la$ (which suffices for our needs), if $\la$ has negative coordinates, one may easily obtain a similar statement translating by the appropriate multiple of $\mathbf 1_{\max_{<'}P}$.
\begin{proposition}\label{assection}
Suppose $\la$ is nonnegative and let $S=\sum_K\alpha_K(\la)$. Then $\cR_\la(P,<,<')$ is the intersection of $S\cR(P,<,<')$ with the subspace $U_\la$ of points $x$ with $x_p=\la_p$ for all $p\in P^*$.
\end{proposition}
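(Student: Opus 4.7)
The plan is to prove both inclusions. The easy direction $\cR_\la(P,<,<')\subseteq S\cR(P,<,<')\cap U_\la$ is handled as follows: any $x=\sum_K\alpha_K(\la)x_K$ with $x_K\in\cR_{\om_K}(P,<,<')$ satisfies $(x_K)_p=(\om_K)_p$ for $p\in P^*$ by Proposition~\ref{fundassection}, so $x_p=\sum_K\alpha_K(\la)(\om_K)_p=\la_p$ and $x\in U_\la$. Since each $\cR_{\om_K}(P,<,<')\subseteq\cR(P,<,<')$ and convexity of $\cR(P,<,<')$ yields $\sum_K\alpha_K(\la)\cR(P,<,<')=S\cR(P,<,<')$, we also have $x\in S\cR(P,<,<')$.

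For the reverse inclusion I would first treat integer points $x\in S\cR(P,<,<')\cap U_\la$. Corollary~\ref{inctuple} provides a unique weakly increasing tuple $J_1\subseteq\dots\subseteq J_S$ in $\cJ$ with $x=\sum_{i=1}^S\one_{\max_{<'}J_i}$. Property (iii) of $<'$ forces every marked element to be $<'$-maximal, so for $p\in P^*$ and any $J\in\cJ$ one has $\one_{\max_{<'}J}(p)=[p\in J]$. Restricting to $P^*$ therefore yields $\la=x|_{P^*}=\sum_{i=1}^S\one_{J_i\cap P^*}$, exhibiting the tuple $J_1\cap P^*\subseteq\dots\subseteq J_S\cap P^*$ as a weakly increasing chain in $\cJ^*$ representing $\la$. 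By the bijection in Lemma~\ref{ehrhart} applied to $(P^*,<)$ this chain must coincide with the canonical tuple of $\la$, whose multiplicity-collapsed form is exactly the decomposition $\la=\sum_K\alpha_K(\la)\om_K$ from Proposition~\ref{markingdecomp}. Setting $y_K=\sum_{i\,:\,J_i\cap P^*=K}\one_{\max_{<'}J_i}$, each $y_K$ is a sum of $\alpha_K(\la)$ vertices of $\cR_{\om_K}(P,<,<')$ in the sense of Definition~\ref{funddef}, hence $y_K\in\alpha_K(\la)\cR_{\om_K}(P,<,<')$, and summing gives $x=\sum_K y_K\in\cR_\la(P,<,<')$.

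To pass from integer to general $x\in S\cR(P,<,<')\cap U_\la$, for rational $x$ I would pick $N\in\bZ_{>0}$ with $Nx\in\bZ^P$; the identity $\alpha_K(N\la)=N\alpha_K(\la)$, immediate from the explicit description of the canonical tuple in the proof of Proposition~\ref{markingdecomp}, gives $Nx\in(NS)\cR(P,<,<')\cap U_{N\la}$, and the integer case yields $Nx\in\cR_{N\la}(P,<,<')=N\cR_\la(P,<,<')$, so $x\in\cR_\la(P,<,<')$. For arbitrary real $x$ I would approximate by rational points in the rational polytope $S\cR(P,<,<')\cap U_\la$ (whose defining inequalities and equations all have integer coefficients) and invoke the closedness of $\cR_\la(P,<,<')$. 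The main obstacle I anticipate is the clean identification of the tuple $J_i\cap P^*$ with the canonical tuple of $\la$; this rests on the injectivity clause of Lemma~\ref{ehrhart}'s bijection for $(P^*,<)$ combined with property (iii), which lets us read off marked coordinates as indicators of intersections with $P^*$. The remaining steps are essentially bookkeeping.
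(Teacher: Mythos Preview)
Your proof is correct and follows essentially the same route as the paper's: both scale to integer points, apply Corollary~\ref{inctuple}, restrict to $P^*$, and invoke the uniqueness of Proposition~\ref{markingdecomp} to count how many $J_i$ have $J_i\cap P^*=K$. Your explicit use of property~(iii) to justify $(\one_{\max_{<'}J})_p=[p\in J]$ for $p\in P^*$ makes explicit a step the paper leaves implicit, and your separation into integer/rational/real cases is a cosmetic reorganization of the paper's direct treatment of rational points.
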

\begin{proof}
The inclusion \[\cR_\la(P,<,<')\subset S\cR(P,<,<')\cap U_K\] is immediate from the definition and Proposition~\ref{fundassection}. Now consider a rational point $x\in S\cR(P,<,<')\cap U_K$. For a large enough integer $N$ the point $Nx\in NS\cR(P,<,<')$ is integer and may uniquely be decomposed as \[x=\sum_{i=1}^{NS}\mathbf 1_{\max_{<'} J_i}\] with $J_i\in\cJ$ forming a weakly increasing tuple by Proposition~\ref{inctuple}. This implies \[N\la=\sum_{i=1}^{NS}\mathbf 1_{J_i\cap P^*}.\] However, we have $N\la=\sum_{K\in\cJ^*}N\alpha_K(\la)\mathbf 1_K$ and, by uniqueness in Proposition~\ref{markingdecomp}, these two decompositions must coincide. Hence, for every $K\in\cJ^*$ precisely $N\alpha_K(\la)$ order ideals $J_i$ satisfy $J_i\cap P^*=K$ which implies $x\in\cR_\la(P,<,<')$.
\end{proof}

\begin{lemma}
If $\la$ is nonnegative, every integer point $x\in\cR_\la(P,<,<')$ can be decomposed as $x=\sum_{K\in\cJ^*} x_K$ with $x_K$ an integer point in $\alpha_K(\la)\cR_{\om_K}(P,<,<')$.
\end{lemma}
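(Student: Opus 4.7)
The plan is to reduce the statement to the uniqueness in Proposition~\ref{inctuple} applied inside the ambient dilated polytope $S\cR(P,<,<')$, and then sort the resulting summands according to how they intersect $P^*$.

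Concretely, let $S=\sum_K\alpha_K(\la)$. By Proposition~\ref{assection}, the integer point $x$ lies in $S\cR(P,<,<')$, so Proposition~\ref{inctuple} gives a unique weakly increasing tuple $J_1\subset\dots\subset J_S$ in $\cJ(P,<)$ with
\[
x=\sum_{i=1}^{S}\mathbf 1_{\max_{<'}J_i}.
\]
Now restrict this equality to the marked coordinates. Since $<'$ places no element of $P$ strictly above any element of $P^*$ (property (iii)), for each $i$ the intersection of $\max_{<'}J_i$ with $P^*$ is exactly $J_i\cap P^*$. Hence, projecting to $\bR^{P^*}$ and using that $x_p=\la_p$ for $p\in P^*$, one gets
\[
\la=\sum_{i=1}^{S}\mathbf 1_{J_i\cap P^*},
\]
with $J_1\cap P^*\subseteq\dots\subseteq J_S\cap P^*$ weakly increasing in $\cJ^*$. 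Collecting equal terms in this chain yields a decomposition of $\la$ of the form in Proposition~\ref{markingdecomp}, and by the uniqueness asserted there it must be the decomposition $\la=\sum_K\alpha_K(\la)\om_K$. In particular, for each $K\in\cJ^*$, precisely $\alpha_K(\la)$ of the indices $i$ satisfy $J_i\cap P^*=K$.

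To finish, group the summands of $x$ according to the value of $J_i\cap P^*$: for each $K\in\cJ^*$, set
\[
x_K=\sum_{i\,:\,J_i\cap P^*=K}\mathbf 1_{\max_{<'}J_i}.
\]
By construction $\sum_K x_K=x$, each $x_K$ is a sum of $\alpha_K(\la)$ vertices of $\cR_{\om_K}(P,<,<')$ (by Definition~\ref{funddef}), and is therefore an integer point in the Minkowski sum $\alpha_K(\la)\cR_{\om_K}(P,<,<')$.

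The only potentially subtle step is ensuring that the count of indices with $J_i\cap P^*=K$ really equals $\alpha_K(\la)$, and this is exactly where the uniqueness statement in Proposition~\ref{markingdecomp} is essential; everything else is bookkeeping on top of Corollary~\ref{inctuple} and Proposition~\ref{assection}.
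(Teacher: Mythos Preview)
Your proof is correct and follows essentially the same approach as the paper. The paper's proof simply says ``repeating the argument from the previous proof for the point $x$ and $N=1$'', referring to the proof of Proposition~\ref{assection}; you have spelled out that argument explicitly, including the justification via property~(iii) that $(\max_{<'}J_i)\cap P^*=J_i\cap P^*$, which the paper leaves implicit.
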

\begin{proof} 
Again denote $S=\sum_K\alpha_K(\la)$. Repeating the argument from the previous proof for the point $x$ and $N=1$ we obtain a tuple $J_1\subset\dots\subset J_S$ in $\cJ$ with \[x=\mathbf 1_{\max_{<'}J_1}+\dots+\mathbf 1_{\max_{<'}J_S}\] where for every $K\in\cJ^*$ exactly $\alpha_K(\la)$ order ideals $J_i$ satisfy $J_i\cap P^*=K$. The lemma follows, since $\sum_{J_i\cap P^*=K}1_{\max_{<'}J_i}$ is an integer point in $\alpha_K(\la)\cR_{\om_K}(P,<,<')$.
\end{proof}

We have the following consequences. 

\begin{cor}\label{fundinctuple}
If $\la$ is nonnegative and $S=\sum_K \alpha_K(\la)$, every integer point $x\in\cR_\la(P,<,<')$ can be uniquely expressed in the form \[x=\mathbf 1_{\max_{<'}J_1}+\dots+\mathbf 1_{\max_{<'}J_S}\] where $J_1\subset\dots\subset J_S$ is a weakly increasing tuple in $\cJ$ and exactly $\alpha_K(\la)$ order ideals $J_i$ satisfy $J_i\cap P^*=K$.
\end{cor}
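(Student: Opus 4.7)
The plan is to reduce the statement to Corollary \ref{inctuple} applied inside the ambient dilated polytope $S\cR(P,<,<')$. Existence of the desired tuple decomposition will fall out of that corollary; the matching with the marking will come from Proposition \ref{markingdecomp} combined with property (iii) of $<'$.

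First, by Proposition \ref{assection} we have $\cR_\la(P,<,<')\subset S\cR(P,<,<')$, so any integer point $x\in\cR_\la(P,<,<')$ is automatically an integer point of $S\cR(P,<,<')$. Applying Corollary \ref{inctuple} in the ambient polytope yields a unique weakly increasing tuple $J_1\subset\dots\subset J_S$ in $\cJ$ with
\[
x=\mathbf 1_{\max_{<'}J_1}+\dots+\mathbf 1_{\max_{<'}J_S}.
\]
It remains only to verify the stated marking condition on this tuple.

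Next I would restrict this identity to the coordinates in $P^*$. By property (iii), no element of $P^*$ lies strictly below anything in $P$ with respect to $<'$, so any $p\in P^*\cap J_i$ is automatically $<'$-maximal in $J_i$; consequently $\max_{<'}J_i\cap P^*=J_i\cap P^*$. Using $x_p=\la_p$ for $p\in P^*$, the restriction of the identity above gives
\[
\la=\mathbf 1_{J_1\cap P^*}+\dots+\mathbf 1_{J_S\cap P^*}.
\]
The subsets $J_i\cap P^*$ form a weakly increasing chain in $\cJ^*$, so after collecting equal terms this becomes a decomposition of $\la$ of precisely the form in Proposition \ref{markingdecomp}. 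The uniqueness in that proposition then forces the number of indices $i$ with $J_i\cap P^*=K$ to equal $\alpha_K(\la)$ for every $K\in\cJ^*$, which is the claimed condition.

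Uniqueness of the whole decomposition in the statement is inherited directly from the uniqueness clause of Corollary \ref{inctuple}. The only genuinely new ingredient compared to the non-marked case is the invocation of property (iii); without it the restriction of $\mathbf 1_{\max_{<'}J_i}$ to $P^*$ would not recover $\mathbf 1_{J_i\cap P^*}$ and the clean alignment with Proposition \ref{markingdecomp} would break down. Everything else is bookkeeping, so I do not anticipate a serious obstacle.
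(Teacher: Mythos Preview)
Your proof is correct and follows essentially the same route as the paper: embed $\cR_\la(P,<,<')$ into $S\cR(P,<,<')$ via Proposition~\ref{assection}, invoke Corollary~\ref{inctuple} for existence and uniqueness of the weakly increasing tuple, then restrict to $P^*$ and use the uniqueness in Proposition~\ref{markingdecomp} to force the marking condition. Your explicit use of property~(iii) to justify $\max_{<'}J_i\cap P^*=J_i\cap P^*$ actually makes a step transparent that the paper leaves implicit.
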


\begin{cor}
The polytope $\cR_\la(P,<,<')$ is normal.
\end{cor}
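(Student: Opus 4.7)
The plan is to reduce to the nonnegative case and then exploit Corollary~\ref{fundinctuple} to split an integer point of $k\cR_\la(P,<,<')$ into a sum of $k$ integer points of $\cR_\la(P,<,<')$.

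First I would handle the reduction. If $\la$ has negative coordinates, by the definition of $\cR_\la$ we have $\cR_\la(P,<,<') = \cR_{\la+\beta\om_{P^*}}(P,<,<') - \beta \mathbf{1}_{\max_{<'}P}$ for a large enough $\beta\in\bZ_{\ge 0}$, so $\cR_\la$ differs from $\cR_{\la+\beta\om_{P^*}}$ by a lattice translation. Since normality is preserved under lattice translations, we may assume $\la$ is nonnegative.

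Next I would observe that scaling the polytope agrees with scaling the marking: since $\cR_\la$ is the Minkowski sum $\sum_K \alpha_K(\la)\cR_{\om_K}(P,<,<')$ and $\alpha_K(k\la)=k\alpha_K(\la)$ for any $k\in\bZ_{\ge 1}$, we have $k\cR_\la(P,<,<')=\cR_{k\la}(P,<,<')$. Writing $S=\sum_K \alpha_K(\la)$, an integer point $x\in k\cR_\la(P,<,<')=\cR_{k\la}(P,<,<')$ then admits, by Corollary~\ref{fundinctuple} applied to the nonnegative marking $k\la$, a unique expression
\[
x=\mathbf{1}_{\max_{<'}J_1}+\dots+\mathbf{1}_{\max_{<'}J_{kS}},
\]
where $J_1\subset\dots\subset J_{kS}$ is a weakly increasing tuple in $\cJ$ in which exactly $k\alpha_K(\la)$ of the ideals $J_i$ satisfy $J_i\cap P^*=K$, for each $K\in\cJ^*$.

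The main step is to distribute these $kS$ ideals into $k$ blocks of size $S$, each a valid tuple for $\cR_\la$. For each $K\in\cJ^*$ list the indices $i$ with $J_i\cap P^*=K$ in increasing order as $i^K_1<i^K_2<\dots<i^K_{k\alpha_K(\la)}$, and for $j\in\{1,\dots,k\}$ put $J_{i^K_r}$ with $(j-1)\alpha_K(\la)<r\le j\alpha_K(\la)$ into the $j$-th block. Each block is a subsequence of the globally increasing tuple, hence itself weakly increasing, and contains precisely $\alpha_K(\la)$ ideals with intersection $K$ for each $K$. By the converse content of Corollary~\ref{fundinctuple} (which follows directly from the Minkowski-sum definition of $\cR_\la$, since an antichain $\max_{<'}J$ with $J\cap P^*=K$ is a vertex of $\cR_{\om_K}$), the sum of indicator functions arising from each block is an integer point of $\cR_\la(P,<,<')$, and summing the $k$ blocks recovers $x$.

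The only genuinely delicate point is verifying that each block, as a subsequence of the original chain, fulfills the conditions needed to invoke Corollary~\ref{fundinctuple} in reverse — which is immediate here because being a subchain preserves weak increase and the prescribed counts $\alpha_K(\la)$ are achieved by construction. Everything else is bookkeeping.
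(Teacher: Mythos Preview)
Your proof is correct and follows essentially the same approach as the paper: reduce to nonnegative $\la$ by integer translation, use $m\cR_\la=\cR_{m\la}$ together with $\alpha_K(m\la)=m\alpha_K(\la)$, apply Corollary~\ref{fundinctuple}, and regroup the resulting $mS$ summands into $m$ groups of $S$. The paper's proof simply says ``regroup the summands'' without spelling out the grouping, whereas you give an explicit distribution scheme; note that the weak-increase property of each block is not actually needed, since any collection with the correct counts $\alpha_K(\la)$ already sums to an integer point of $\cR_\la$ by the Minkowski-sum definition.
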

\begin{proof}
It suffices to consider the case of a nonnegative $\la$ because normality is preserved by an integer translation. However, for $m\in\bZ_{\ge 0}$ we have $m\cR_\la(P,<,<')=\cR_{m\la}(P,<,<')$ and $\alpha_K(m\la)=m\alpha_K(\la)$ for all $K$. Hence, given an integer point $x\in m\cR_\la(P,<,<')$ we may apply Corollary~\ref{fundinctuple} to $x$ and then regroup the summands to obtain a sum of $m$ integer points in $\cR_\la(P,<,<')$.
\end{proof}

\begin{cor}
Let $<''$ be another order on $P$ satisfying conditions (i)-(iii). Then the polytopes $\cR_\la(P,<,<')$ and $\cR_\la(P,<,<'')$ are Ehrhart-equivalent.
\end{cor}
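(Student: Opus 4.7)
The plan is to reduce Ehrhart-equivalence to lattice-point counting in each polytope, and then to observe that the bijective parametrization of lattice points provided by Corollary~\ref{fundinctuple} depends only on $(P,<)$, $P^*$, and $\la$, but not on the weaker order.

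First I would verify the dilation identity $m\cR_\la(P,<,<')=\cR_{m\la}(P,<,<')$ for every $m\in\bZ_{\ge 0}$. For nonnegative $\la$ this is immediate from Definition~\ref{markeddef} together with $\alpha_K(m\la)=m\alpha_K(\la)$, since dilation distributes over Minkowski sums. For $\la$ with negative coordinates one chooses $\beta$ as in the definition and checks that $m\cR_\la(P,<,<')=m\cR_{\la+\beta\om_{P^*}}(P,<,<')-m\beta\mathbf 1_{\max_{<'}P}=\cR_{m\la+m\beta\om_{P^*}}(P,<,<')-m\beta\mathbf 1_{\max_{<'}P}$, which agrees with $\cR_{m\la}(P,<,<')$ computed using the translation constant $m\beta$. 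Given this identity, Ehrhart-equivalence of $\cR_\la(P,<,<')$ and $\cR_\la(P,<,<'')$ will follow once we show that $|\cR_\mu(P,<,<')\cap\bZ^P|=|\cR_\mu(P,<,<'')\cap\bZ^P|$ for every dominant marking $\mu$.

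For nonnegative $\mu$ I would invoke Corollary~\ref{fundinctuple}. Setting $S=\sum_K\alpha_K(\mu)$, that corollary gives a bijection between integer points of $\cR_\mu(P,<,<')$ and weakly increasing tuples $J_1\subset\dots\subset J_S$ in $\cJ(P,<)$ such that exactly $\alpha_K(\mu)$ of the $J_i$ satisfy $J_i\cap P^*=K$, for each $K\in\cJ^*$. Crucially, the set of such tuples is described entirely in terms of $(P,<)$, $P^*$, and $\mu$, with no reference to $<'$; the order $<'$ enters only through the assignment $J_i\mapsto\one_{\max_{<'}J_i}$ used to realize the point. Applying the same corollary to $\cR_\mu(P,<,<'')$ produces a bijection with the identical indexing set, so the lattice-point counts coincide.

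To handle a marking $\la$ with negative coordinates I would pick $\beta$ large enough so that $\la+\beta\om_{P^*}$ is nonnegative and observe that both $\cR_\la(P,<,<')$ and $\cR_\la(P,<,<'')$ are integer translates (by $-\beta\mathbf 1_{\max_{<'}P}$ and $-\beta\mathbf 1_{\max_{<''}P}$ respectively) of nonnegative MRPPs, so each has the same lattice-point count as its nonnegative counterpart; those counterparts agree by the previous paragraph. The only mild subtlety in the argument is the fact that the translations used to bring $\la$ into the nonnegative range depend on the weaker order, but since both translation vectors have integer coordinates they leave the number of lattice points untouched, so this does not present a real obstacle. No genuinely hard step arises: the result is essentially a direct corollary of the combinatorial parametrization already established.
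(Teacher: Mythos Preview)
Your argument is correct and follows essentially the same approach as the paper: both use Corollary~\ref{fundinctuple} to identify the lattice points of $m\cR_\la(P,<,<')$ with a set of weakly increasing tuples in $\cJ$ that depends only on $(P,<)$, $P^*$, and $m\la$, not on the weaker order. You are simply more explicit than the paper about the dilation identity $m\cR_\la=\cR_{m\la}$ and the reduction to the nonnegative case.
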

\begin{proof}
By Corollary~\ref{fundinctuple}, for any $m\in\bZ_{\ge 0}$ the number of integer points in $m\cR_\la(P,<,<')$ is equal to the number of weakly increasing tuples in $\cJ$ containing exactly $\alpha_K(m\la)$ order ideals $J$ satisfying $J\cap P^*=K$ for every $K\in\cJ^*$. However, the same holds for $m\cR_\la(P,<,<'')$ and the claim follows.
\end{proof}

\begin{remark}\label{chainorder}
Consider the two extreme cases when $<'$ is trivial and when $<'$ is obtained from $<$ by only removing relations that contradict (iii). One may check that $\cR_\la(P,<,<')$ then coincides with, respectively, the \textit{marked chain polytope} and the \textit{marked order polytope} defined in~\cite{ABS}. Hence, any $\cR_\la(P,<,<'')$ is Ehrhart-equivalent to both of these polytopes.
\end{remark}

\subsection{Toric varieties of MRPPs}\label{MRPPtoric}

Denote by $H_\la(P,<,<')$ the projective toric variety of $\cR_\la(P,<,<')$, we call it the \textbf{marked relative Hibi variety}. In this section we obtain a multiprojective realization of $H_\la(P,<,<')$ and interpret the defining ideal as a relative Hibi ideal. We also deduce that every MRPP can be identified with another MRPP of a very special form. 

\begin{definition}
We attach a chain $D(\la)$ in the lattice $\cJ^*$ to the marking $\la$. If $\la$ is nonnegative than $D(\la)$ consists of those $K$ for which $\alpha_K(\la)>0$ together with $\varnothing$ and $P^*$. For general $\la$ we set $D(\la)=D(\la+\beta\om_{P^*})$ where the latter marking is nonnegative.
\end{definition}

\begin{proposition}\label{samefan}
Consider a dominant marking $\mu$ such that $D(\mu)=D(\la)$, then the varieties $H_\la(P,<,<')$ and $H_\mu(P,<,<')$ are isomorphic.
\end{proposition}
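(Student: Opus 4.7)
The plan is to show that the polytopes $\cR_\la(P,<,<')$ and $\cR_\mu(P,<,<')$ have the same normal fan. Since the abstract projective toric variety of a normal lattice polytope is determined by its normal fan, this will immediately imply the claimed isomorphism.

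First I would reduce to the case where both $\la$ and $\mu$ are nonnegative dominant markings. By Definition~\ref{markeddef}, if $\la$ has negative coordinates then $\cR_\la(P,<,<')$ is just an integer translate of $\cR_{\la+\beta\om_{P^*}}(P,<,<')$ for a suitable $\beta$, and such a translation preserves the associated projective toric variety. The hypothesis $D(\la)=D(\mu)$ is invariant under this shift, since $\alpha_{P^*}(\la+\beta\om_{P^*})=\alpha_{P^*}(\la)+\beta$ remains positive while the other coefficients are unchanged.

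Assuming now both markings are nonnegative, I would expand
\[
\cR_\la(P,<,<')=\sum_{K\in\cJ^*}\alpha_K(\la)\,\cR_{\om_K}(P,<,<'),
\]
and analogously for $\mu$. As remarked after Definition~\ref{funddef}, the summands for $K=\varnothing$ and $K=P^*$ are single points and contribute only translations. For each remaining $K$, the hypothesis $D(\la)=D(\mu)$ guarantees $\alpha_K(\la)>0$ precisely when $\alpha_K(\mu)>0$. Thus, up to translation, $\cR_\la(P,<,<')$ and $\cR_\mu(P,<,<')$ are Minkowski sums of the same collection of fundamental MRPPs, namely $\{\cR_{\om_K}(P,<,<'):K\in D(\la)\setminus\{\varnothing,P^*\}\}$, with strictly positive coefficients in each case.

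To conclude, I would invoke the standard toric-geometry fact that the normal fan of a Minkowski sum $\sum_ic_iQ_i$ with all $c_i>0$ is the common refinement of the normal fans of the $Q_i$, depending only on which summands appear and not on the specific positive weights. This produces a common normal fan for $\cR_\la(P,<,<')$ and $\cR_\mu(P,<,<')$, hence the desired isomorphism $H_\la(P,<,<')\simeq H_\mu(P,<,<')$. The main point requiring attention is the appeal to the common-refinement fact; the rest is bookkeeping, in particular checking that the trivial point summands $\cR_{\om_\varnothing}$ and $\cR_{\om_{P^*}}$ can safely be absorbed into a translation.
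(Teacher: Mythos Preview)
Your proof is correct and follows essentially the same approach as the paper, which also argues via Definition~\ref{markeddef} and the fact that the normal fan of a Minkowski sum is the common refinement of the summands' normal fans. The paper's proof is a single sentence while you spell out the reduction to nonnegative markings and the role of the point summands $\cR_{\om_\varnothing}$ and $\cR_{\om_{P^*}}$, but the underlying argument is identical.
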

\begin{proof}
This is immediate from Definition~\ref{markeddef} and the fact that the normal fan of a Minkowski sum is the common refinement of the summands' normal fans.
\end{proof}

For $K\in D(\la)$ let $V_K$ denote the set of integer points in $R_{\om_K}(P,<,<')$. Denote \[\bP_\la=\bigtimes_{K\in D(\la)}\bP(\bC^{V_K}).\] Let $\cJ_\la\subset\cJ$ denote the sublattice of ideals $J$ for which $J\cap P^*\in D(\la)$. Note that $V_K$ is in bijection with the set of $J\in\cJ_\la$ with $J\cap P^*=K$ and we, therefore, have a bijection between $\bigcup_{K\in D(\la)} V_K$ and $\cJ_\la$. This lets us view $\bC[\cJ_\la]\subset\bC[\cJ]$ as the multihomogeneous coordinate ring of $\bP_\la$. Recall the ideal $I_{P,<,<'}\subset\bC[\cJ]$ and denote $I_\la=I_{P,<,<'}\cap\bC[\cJ_\la]$.
\begin{theorem}\label{mrppmultiproj}
The ideal $I_\la$ in $\bC[\cJ_\la]$ is multihomogeneous and its zero set in $\bP_\la$ is isomorphic to $H_\la(P,<,<')$. Moreover, $I_\la$ is the vanishing ideal of its zero set.
\end{theorem}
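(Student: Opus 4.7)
The plan is to apply Lemma \ref{multiproj} after a harmless reduction. First I would observe that $\bP_\la$ and $I_\la = I_{P,<,<'} \cap \bC[\cJ_\la]$ depend on $\la$ only through $D(\la)$, while $H_\la(P,<,<')$ depends on $D(\la)$ up to isomorphism by Proposition \ref{samefan}. After reducing to the case of nonnegative $\la$ via the integer translation built into Definition \ref{markeddef}, I would replace $\la$ with the marking $\mu = \sum_{K \in D(\la) \setminus \{\varnothing\}} \om_K$; this $\mu$ satisfies $D(\mu) = D(\la)$ and has the virtue that $\cR_\mu(P,<,<')$ is literally the Minkowski sum $\sum_{K \in D(\la) \setminus \{\varnothing\}} \cR_{\om_K}(P,<,<')$, with each fundamental summand appearing exactly once.

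The next step is to apply Lemma \ref{multiproj} to these summands. Both of its hypotheses are already established: normality of the Minkowski sum is the corollary asserting normality of $\cR_\la$, and the decomposition $\cR_\mu \cap \bZ^P = \sum_{K \in D(\la) \setminus \{\varnothing\}} (\cR_{\om_K} \cap \bZ^P)$ is exactly the lemma just preceding Corollary \ref{fundinctuple}. Using the natural bijection between $\bigcup_{K \in D(\la)} V_K$ and $\cJ_\la$, the map $\varphi$ supplied by Lemma \ref{multiproj} becomes
\[
\varphi(X_J) = t_{J \cap P^*} \prod_{p \in \max_{<'} J} z_p, \qquad J \in \cJ_\la,
\]
and the lemma delivers three of the four assertions at once: $\ker \varphi$ is multihomogeneous, its zero set in $\bP_\la$ is isomorphic to the toric variety of $\cR_\mu$ (hence, via Proposition \ref{samefan}, to $H_\la(P,<,<')$), and $\ker \varphi$ is the vanishing ideal of that zero set.

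The main obstacle is identifying $\ker \varphi$ with $I_\la$, which is non-tautological since $\varphi$ tracks more grading information than the single-$t$ map $\varphi_{P,<,<'}$ that defines $I_{P,<,<'}$. One inclusion is free: composing $\varphi$ with the specialization $t_K \mapsto t$ recovers $\varphi_{P,<,<'}|_{\bC[\cJ_\la]}$, whose kernel is $I_\la$ by definition, giving $\ker \varphi \subseteq I_\la$. For the reverse inclusion I would show that the two maps induce the same equivalence relation on monomials in $\bC[\cJ_\la]$: two monomials $X_\mathbf{J} = \prod_i X_{J_i}$ and $X_{\mathbf{J}'} = \prod_i X_{J'_i}$ satisfy $\varphi_{P,<,<'}(X_\mathbf{J}) = \varphi_{P,<,<'}(X_{\mathbf{J}'})$ if and only if $\varphi(X_\mathbf{J}) = \varphi(X_{\mathbf{J}'})$. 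The forward direction is the essential point; starting from $\sum_i \one_{\max_{<'}J_i} = \sum_i \one_{\max_{<'}J'_i}$, I would restrict to $P^*$ and invoke property (iii) of $<'$, which forces $P^* \cap J \subseteq \max_{<'} J$ for every $J \in \cJ$, to obtain $\sum_i \one_{J_i \cap P^*} = \sum_i \one_{J'_i \cap P^*}$. Since $D(\la)$ is a chain in $\cJ^*$, the uniqueness in Proposition \ref{markingdecomp} then forces the multisets $\{J_i \cap P^*\}$ and $\{J'_i \cap P^*\}$ to agree on every $K \in D(\la) \setminus \{\varnothing\}$, and the matching total degrees take care of the count at $\varnothing$. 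Once the two equivalence relations on monomials coincide, a linear combination vanishes under $\varphi_{P,<,<'}|_{\bC[\cJ_\la]}$ exactly when it vanishes under $\varphi$, yielding $I_\la \subseteq \ker \varphi$ and completing the proof.
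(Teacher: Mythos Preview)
Your proof is correct and follows the same overall architecture as the paper: reduce via Proposition~\ref{samefan} to a marking with all nonzero $\alpha_K$ equal to $1$, then invoke Lemma~\ref{multiproj}. The one place where you genuinely diverge is in the identification $\ker\varphi = I_\la$. The paper does this in one line by constructing an explicit injective algebra map $\tau$ on the target: choosing $p_i\in K_i\setminus K_{i-1}$ for each link in the chain $D(\la)$, one sets $\tau(t)=t_\varnothing$, $\tau(z_{p_i})=t_{K_i}t_{K_{i-1}}^{-1}z_{p_i}$ and $\tau(z_p)=z_p$ otherwise, and checks that $\tau\circ\varphi_{P,<,<'}=\varphi$ on $\bC[\cJ_\la]$; injectivity of $\tau$ then forces the two kernels to agree. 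Your route instead compares the equivalence relations that $\varphi$ and $\varphi_{P,<,<'}$ induce on monomials, using property~(iii) to pass from $\max_{<'}J$ to $J\cap P^*$ and then the linear independence underlying Proposition~\ref{markingdecomp} to recover the multiset $\{J_i\cap P^*\}$. Both arguments are short; the paper's change of variables is slicker and avoids any appeal to property~(iii) or to uniqueness of decompositions, while your argument makes the combinatorial reason for the equality more transparent.
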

\begin{proof}
By Proposition~\ref{samefan} we may assume that $\la$ is nonnegative and $\alpha_K(\la)=1$ for all nonempty $K\in D(\la)$. Consider the polynomial ring $R=\bC[\{z_p\}_{p\in P},\{t_K\}_{K\in D(\la)}]$. By Lemma~\ref{multiproj}, $H_\la(P,<,<')$ is the zero set of the kernel of $\varphi_\la:\bC[\cJ_\la]\to R$ such that for $J\in\cJ_\la$ with $J\cap P^*=K$ one has \[\varphi_\la(X_J)=t_K\prod_{p\in\max_{<'}J}z_p.\]

To prove the proposition we show that the kernel of $\varphi_\la$ is $I_\la$. Indeed, $I_\la$ is the kernel of the restriction to $\bC[\cJ_\la]$ of the map $\varphi_{P,<,<'}$ defined in~\eqref{relkernel}. Obviously, the latter kernel contains $\ker\varphi_\la$. To prove the reverse inclusion denote the elements of $D(\la)$ by 
\begin{equation}\label{Delements}
\varnothing=K_0\subset\dots\subset K_{|D(\la)|-1}=P^*
\end{equation}
and choose $p_i\in K_i\backslash K_{i-1}$ for every $i\in[1,|D(\la)|-1]$. Set $\tau(t)=t_\varnothing$, $\tau(z_{p_i})=t_{K_i}t_{K_{i-1}}^{-1}z_{p_i}$ and $\tau(z_p)=z_p$ for all other $p$, then $\tau\varphi_{P,<,<'}=\varphi_\la$ on $\bC[\cJ_\la]$. 
\end{proof}

We proceed to show that the defining ideal $I_\la$ can itself be viewed as a relative Hibi ideal. Consider an equivalence relation $\sim$ on $P$ with $p_1\sim p_2$ when any $J\in\cJ_\la$ contains either both or neither of $p_1$ and $p_2$. Denote $Q=P/\sim$ and let $\pi:P\to Q$ be the natural projection. It is obvious that every $J\in\cJ_\la$ is a union of equivalence classes of $\sim$. Define an order $\ll$ on $Q$ setting $q_1\ll q_2$ if and only if every $J\in\cJ_\la$ with $\pi^{-1}(q_2)\subset J$ also contains the equivalence class $\pi^{-1}(q_1)$. It is a general and straightforward property of sublattices that $\pi$ defines a lattice isomorphism from $\cJ_\la$ to $\cJ(Q,\ll)$, in terms of Remark~\ref{categorical} the resulting embedding $\cJ(Q,\ll)\hookrightarrow\cJ$ is dual to the epimorphism $\pi:(P,<)\to(Q,\ll)$. Set $Q^*=\pi(P^*)$ and define another order $\ll'$ on $Q$ by $q_1\ll' q_2$ if and only if $q_1\notin Q^*$ and $p_1<' p_2$ for some $p_1\in\pi^{-1}(q)$ and $p_2\in\pi^{-1}(q_2)$. The antisymmetricity of $\ll'$ is ensured by all equivalence classes being convex with respect to $<$ and hence $<'$. Evidently, $\ll'$ is weaker than $\ll$.
\begin{proposition}\label{pistar}
For any $J_1,J_2\in\cJ_\la$ we have \[\pi(J_1*' J_2)=\pi(J_1)*_{Q,\ll'}\pi(J_2).\]
\end{proposition}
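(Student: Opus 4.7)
The plan is to prove $\pi(K)=M$ (writing $K:=J_1*'J_2$, $L_i:=\pi(J_i)$, and $M:=L_1*_{Q,\ll'}L_2$) by showing that both sides, viewed as $\ll'$-ideals in $Q$, satisfy the same defining identity \eqref{stardef} and therefore coincide.

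First I would observe that $\cJ_\la$ is closed under $*'$. By property (iii) above, every $p\in P^*$ is $<'$-maximal, so inspecting \eqref{star} yields $K\cap P^*=(J_1\cap J_2)\cap P^*=(J_1\cap P^*)\cap(J_2\cap P^*)$; since $D(\la)$ is a chain, this intersection lies in $D(\la)$, whence $K\in\cJ_\la$. Consequently $\pi(K)\in\cJ(Q,\ll)\subseteq\cJ(Q,\ll')$ is a bona fide $\ll'$-ideal, and both $\pi(K)$ and $M$ live in $\cJ(Q,\ll')$.

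Next I would note that \eqref{stardef} applied in $(Q,\ll')$ determines its operand uniquely: any $\ll'$-ideal is recovered from its antichain of $\ll'$-maximal elements, and that antichain is pinned down by \eqref{stardef} once $L_1,L_2$ are fixed. So it suffices to verify
\[
\mathbf{1}_{\max_{\ll'}L_1}+\mathbf{1}_{\max_{\ll'}L_2}=\mathbf{1}_{\max_{\ll'}(L_1\cup L_2)}+\mathbf{1}_{\max_{\ll'}\pi(K)}
\]
in $\bR^Q$, which would then force $\pi(K)=M$.

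I would verify this coordinate-by-coordinate. For $q\in Q^*$, the definition of $\ll'$ makes $q$ automatically $\ll'$-maximal in every ideal containing it, and the identity reduces to the observation $\pi(K)\cap Q^*=(L_1\cap L_2)\cap Q^*$ established above. For $q\in L_1\setminus L_2$ (or symmetrically), the fact that $J_2\in\cJ$ is a $<'$-ideal prevents $q\ll' q'$ from having a witness $q'\in L_2$, so $q\in\max_{\ll'}L_1\iff q\in\max_{\ll'}(L_1\cup L_2)$ and $q\notin\pi(K)$, making both sides agree. The main technical obstacle is the case $q\in(L_1\cap L_2)\setminus Q^*$: here I would exploit that $K\in\cJ_\la$ is a union of $\sim$-classes (so $q\in\pi(K)\iff\pi^{-1}(q)\subseteq K$) together with the characterization ``$q\in\max_{\ll'}\pi(J)$ iff the class $\pi^{-1}(q)$ is $<'$-upward-closed in $J$,'' selecting $<'$-maximal representatives within each class to trace witnesses through the generating antichain $(J_1\cap J_2)\cap(\max_{<'}J_1\cup\max_{<'}J_2)$ of $K$. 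The delicate point is that distinct elements of a single $\sim$-class may differ in their $<'$-maxima status, so careful choice of representatives is needed to align the $P$- and $Q$-level descriptions of maximality.
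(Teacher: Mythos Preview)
Your overall strategy---reducing to the uniqueness in~\eqref{stardef} and then verifying the resulting identity in~$\bR^Q$ coordinate by coordinate---is valid and is genuinely different from the paper's direct element-by-element comparison via formula~\eqref{star}. Your treatment of the cases $q\in Q^*$ and $q\in L_1\triangle L_2$ is correct.

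The gap is precisely where you flag it: the case $q\in(L_1\cap L_2)\setminus Q^*$. The vague plan of ``selecting $<'$-maximal representatives'' does not go through when the class $C=\pi^{-1}(q)$ has more than one element. Concretely, write $\alpha_i$ for the indicator of ``$C$ is $<'$-upward-closed in $J_i$'' and $\alpha_K$ for the indicator of ``$C\subset K$ and $C$ is $<'$-upward-closed in $K$''. Since $\alpha_\cup=\alpha_1\wedge\alpha_2$, your identity at $q$ reduces to $\alpha_K=\alpha_1\vee\alpha_2$. Now imagine $C=\{a,b\}$ with $a\in\max_{<'}J_1\setminus\max_{<'}J_2$ and $b\in\max_{<'}J_2\setminus\max_{<'}J_1$. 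Then $\alpha_1=\alpha_2=0$ (each of $a,b$ witnesses failure in one $J_i$), yet both $a$ and $b$ lie in the generating antichain of $K$, and one checks that $C$ is $<'$-upward-closed in $K$, giving $\alpha_K=1$. So the coordinate-wise identity would fail if such a class existed.

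What saves the day, and what the paper proves, is that no such class exists: for every $q\notin Q^*$ the fibre $\pi^{-1}(q)$ is a \emph{singleton}. The argument is short: if $p_1\neq p_2$ lie in $\pi^{-1}(q)$ with $p_1\in\max_<\pi^{-1}(q)$, take the minimal $J_0\in\cJ_\la$ containing the class; then $p_1\in\max_< J_0$ and $p_1\notin P^*$, so $J_0\setminus\{p_1\}\in\cJ_\la$ separates $p_1$ from $p_2$, contradicting $p_1\sim p_2$. Once you insert this observation, your delicate case collapses: with $\pi^{-1}(q)=\{p\}$ the $q$-coordinate of your $\bR^Q$-identity is literally the $p$-coordinate of~\eqref{stardef} in $\bR^P$, and you are done. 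So your route works, but it cannot avoid proving the same structural fact the paper uses.
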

\begin{proof}
It should first be noted that $\cJ_\la$ is closed under $*'$ because $(J_1*'J_2)\cap P^*=J_1\cap J_2\cap P^*$. Now we apply formula~\eqref{star}. Consider \[q\in\max\nolimits_{\ll'}(\pi(J_1)*_{Q,\ll'}\pi(J_2)),\] we see that $q\in\pi(J_1\cap J_2)$ and that $q$ is maximal in one of $\pi(J_1)$ and $\pi(J_2)$, say $\pi(J_1)$. By the definition of $\ll'$ this means that either $q\notin Q^*$ and any $p\in \max_{<'}\pi^{-1}(q)$ is $<'$-maximal in $J_1$ or $\pi^{-1}(q)$ contains some $p\in P^*$ which must be $<'$-maximal in $J_1$. Since $p\in J_1\cap J_2$ we deduce that $p$ and hence all of $\pi^{-1}(q)$ lies in $J_1*' J_2$. This proves that the left-hand side of the desired equality contains the right-hand side.

To prove the reverse inclusion we are to show that every $q\in\max_{\ll'}\pi(J_1*'J_2)$ lies in $\pi(J_1)*_{Q,\ll'}\pi(J_2)$. If $q\in Q^*$, then by the definition of $\ll'$ it is $\ll'$-maximal in each of $\pi(J_1)$ and $\pi(J_2)$ and hence lies in $\pi(J_1)*_{Q,\ll'}\pi(J_2)$. Suppose $q\notin Q^*$. Let us show that $\pi^{-1}(q)$ consists of a single element. Indeed, suppose we have distinct $p_1,p_2\in\pi^{-1}(q)$ where $p_1\in\max_<{\pi^{-1}(q)}$. Consider the minimal $J_0\in\cJ_\la$ containing $\pi^{-1}(q)$, we claim that $p_1\in\max_< J_0$. Indeed, $J_0$ consists of those $p$ which lie in every $J\in\cJ_\la$ containing $p_1$ and if $p_0>p_1$ for some $p_0\in J_0$, then $p_0\sim p_1$, i.e.\ $p_0\in\pi^{-1}(q)$ which contradicts our choice of $p_1$. We deduce that $J_0\backslash\{p_1\}\in\cJ_\la$ which contradicts $p_1\sim p_2$. Now let $p$ be the only element of $\pi^{-1}(q)$, by our choice of $q$ we have $p\in\max_{<'}(J_1*'J_2)$, hence $p$ is $<'$-maximal in at least one of $J_1$ and $J_2$. Assume $p\in\max_{<'}J_1$, then $q\in\max_{\ll'}\pi(J_1)$ and, since $q\in\pi(J_1)\cap\pi(J_2)$, we obtain $q\in\pi(J_1)*_{Q,\ll'}\pi(J_2)$.
\end{proof}

We will denote by $\xi$ the isomorphism $X_J\mapsto X_{\pi(J)}$ from $\bC[\cJ_\la]$ to $\bC[\cJ(Q,\ll)]$.
\begin{lemma}\label{ashibi}
We have $\xi(I_\la)=I_{Q,\ll,\ll'}$.
\end{lemma}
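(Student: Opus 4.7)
The plan is to prove $\xi(I_\la) = I_{Q,\ll,\ll'}$ by showing that $I_\la$ is generated inside $\bC[\cJ_\la]$ by the quadratic binomials
\[
X_{J_1}X_{J_2} - X_{J_1 \cup J_2} X_{J_1 *' J_2}, \qquad J_1, J_2 \in \cJ_\la,
\]
after which Proposition~\ref{pistar} and the lattice-homomorphism property of $\pi$ match these with the generators of $I_{Q,\ll,\ll'}$ produced by Theorem~\ref{quadratic} applied to $(Q,\ll,\ll')$.

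The preliminary observation is that $\cJ_\la$ is closed under $\cup$ and $*'$: if $J_1, J_2 \in \cJ_\la$ then $(J_1 \cup J_2) \cap P^* = (J_1 \cap P^*) \cup (J_2 \cap P^*)$ and $(J_1 *' J_2) \cap P^* = J_1 \cap J_2 \cap P^*$ (the latter being the identity noted in the proof of Proposition~\ref{pistar}), and since $D(\la)$ is a chain both expressions lie in $D(\la)$. Each binomial above therefore lies in $I_{P,<,<'} \cap \bC[\cJ_\la] = I_\la$; let $I' \subseteq I_\la$ denote the ideal they generate.

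To prove $I' = I_\la$ I would adapt the Gr\"obner argument from the proof of Theorem~\ref{quadratic} to the subring $\bC[\cJ_\la]$. Using the weight $w_J = |P \setminus J|^2$, the $w$-initial parts of the generators of $I'$ are the monomials $X_{J_1}X_{J_2}$ for incomparable $J_1, J_2 \in \cJ_\la$, which generate the monomial ideal $I^m_\la = I^m_{P,<} \cap \bC[\cJ_\la]$. The central step is to verify $\initial_w I_\la = I^m_\la$: for $f \in I_\la$ the form $\initial_w f$ lies in $\initial_w I_{P,<,<'} = I^m_{P,<}$ (the $w$-initial ideal computed in the proof of Theorem~\ref{quadratic}) and, since its monomials are a subset of those of $f$, also in $\bC[\cJ_\la]$, hence in $I^m_\la$; the reverse inclusion is automatic from $I^m_\la \subseteq \initial_w I' \subseteq \initial_w I_\la$. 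Flatness of the Gr\"obner degeneration then yields matching total-degree Hilbert series for the quotients $\bC[\cJ_\la]/I_\la$, $\bC[\cJ_\la]/I^m_\la$ and $\bC[\cJ_\la]/I'$, and combined with $I' \subseteq I_\la$ this forces $I' = I_\la$.

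Finally, $\xi$ sends each generator $X_{J_1}X_{J_2} - X_{J_1 \cup J_2} X_{J_1 *' J_2}$ of $I_\la = I'$ to $X_{\pi(J_1)}X_{\pi(J_2)} - X_{\pi(J_1) \cup \pi(J_2)} X_{\pi(J_1) *_{Q,\ll'} \pi(J_2)}$ by Proposition~\ref{pistar}, which is a generator of $I_{Q,\ll,\ll'}$ by Theorem~\ref{quadratic}; and because $\pi$ is a lattice isomorphism $\cJ_\la \to \cJ(Q,\ll)$, every such generator of $I_{Q,\ll,\ll'}$ arises this way, giving $\xi(I_\la) = I_{Q,\ll,\ll'}$. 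The main obstacle is the identity $\initial_w I_\la = I^m_\la$, specifically the interaction between taking $w$-initial forms and restricting to the subring $\bC[\cJ_\la]$; but this turns out to be immediate from the fact that $\initial_w f$ involves only monomials already present in $f$.
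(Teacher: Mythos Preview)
Your proof is correct and follows essentially the same strategy as the paper: both reduce the claim to showing that $I_\la$ is generated by the binomials $X_{J_1}X_{J_2}-X_{J_1\cup J_2}X_{J_1*'J_2}$ with $J_1,J_2\in\cJ_\la$, after which Proposition~\ref{pistar} and Theorem~\ref{quadratic} finish the argument. The only difference is in how this generation statement is established. You compute $\initial_w I_\la=I^m_\la=\initial_w I'$ and invoke equality of Hilbert series (exactly the technique the paper used earlier for Theorem~\ref{quadratic}), whereas the paper's proof of Lemma~\ref{ashibi} instead uses a direct straightening argument: every monomial in $\bC[\cJ_\la]$ is reduced modulo the binomials to a ``sorted'' monomial $X_{J'_1}\dots X_{J'_m}$ with $J'_1\subset\dots\subset J'_m$, and such monomials are linearly independent modulo $I_\la$ by Corollary~\ref{inctuple}. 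These are two standard and essentially interchangeable ways to prove that a set of quadratic binomials generates a toric-type ideal.
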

\begin{proof}
In view of Theorem~\ref{quadratic} and Proposition~\ref{pistar} it suffices to show that $I_\la$ is generated by the expressions \begin{equation}\label{binomial2}
X_{J_1}X_{J_2}-X_{J_1\cup J_2}X_{J_1*'J_2} 
\end{equation}
for all $J_1,J_2\in\cJ_\la$. All of these expressions lie in $I_\la$ since they lie in $I_{P,<,<'}$. 

Consider a monomial $M=X_{J_1}\dots X_{J_m}\in\bC[\cJ_\la]$. If there are two incomparable $J_i$, say $J_1$ and $J_2$, then we may add a multiple of the binomial~\eqref{binomial2} to $M$ to obtain the monomial $X_{J_1\cup J_2}X_{J_1*'J_2}X_{J_3}\dots X_{J_m}$. By iterating this procedure we will arrive at a monomial $X_{J'_1}\dots X_{J'_m}$ with $J'_1\subset\dots\subset J'_m$. Hence $I_\la$ is generated by some set of linear combinations of such monomials $X_{J'_1}\dots X_{J'_m}$ and the binomials~\eqref{binomial2}. However, by Corollary~\ref{inctuple} for distinct $M'=X_{J'_1}\dots X_{J'_m}$ and $M''=X_{J''_1}\dots X_{J''_m}$ with $J'_1\subset\dots\subset J'_m$ and $J''_1\subset\dots\subset J''_m$ we have $\sum\one_{\max_{<'}J'_i}\neq\sum\one_{\max_{<'}J''_i}$ and, therefore, $\varphi_{P,<,<'}(M')\neq\varphi_{P,<,<'}(M'')$. Thus, a nontrivial linear combination of such monomials cannot lie in $I_\la$.
\end{proof}

As above let $D(\la)$ consist of $K_0\subset\dots\subset K_{|D(\la)|-1}$, for $i\in[1,|D(\la)|-1]$ denote $\Delta_i=K_i\backslash K_{i-1}$. Evidently, every $\Delta_i$ is contained in a single equivalence class of $\sim$ and elements contained in two distinct $\Delta_i$ cannot be equivalent. Also note that $\la_{p_1}=\la_{p_2}$ whenever $p_1,p_2\in\Delta_i$. We see that $Q^*$ consists of $|D(\la)|-1$ elements $\tilde q_i=\pi(\Delta_i)$ and we may define a dominant marking $\mu\in\bR^{Q^*}$ (for the poset $(Q,\ll)$)  by setting $\mu_{\tilde q_i}=\la_p$ where $p\in\Delta_i$. Proposition~\ref{pistar} implies that $\cJ(Q,\ll)$ is closed under $*_{Q,\ll'}$ and the definition of $\ll'$ shows that all elements of $Q^*$ are $\ll'$-maximal in $Q$, this lets us consider the MRPP $\cR_\mu(Q,\ll,\ll')$. We will now show that $\cR_\mu(Q,\ll,\ll')$ is unimodularly equivalent to $\cR_\la(P,<,<')$.

First let us note that the poset $(Q^*,\ll)$ is linearly ordered ($\tilde q_i\ll\tilde q_j$ whenever $i<j$). Every element of $\cJ(Q^*,\ll)$ has the form $L_i=\{\tilde q_1,\dots,\tilde q_i\}$ and we have $\alpha_{L_i}(\mu)=\alpha_{K_i}(\la)$. In particular, the chain $D(\mu)$ is all of $\cJ(Q^*,\ll)$. As above we can view $\bC[\cJ(Q,\ll)]$ as the multihomogeneous coordinate ring of \[\bP_\mu=\bigtimes_{i\in[1,|D(\la)|-1]}\bP(\bC^{V_{L_i}})\] where $V_{L_i}$ is the vertex set of $\cR_{\om_{L_i}}(Q,\ll,\ll')$ which is in bijection with the set of $M\in\cJ(Q,\ll)$ with $M\cap Q^*=L_i$. However, $\pi^{-1}$ provides a bijection between such $M$ and the $J\in\cJ$ with $J\cap P^*=K_i$ and, subsequently, with $V_{K_i}$. We conclude that there is a natural isomorphism between $\bP_\la$ and $\bP_\mu$ which identifies the zero set of $I$ with the zero set of $\zeta(I)$ for any multihomogeneous $I\subset\bC[\cJ_\la]$.

Now let us denote by $\Gamma_i$ the equivalence class containing $\Delta_i$. In the proof of Proposition~\ref{pistar} we have seen that all equivalence classes other than the $\Gamma_i$ consist of a single element. For every $i\in[1,|D(\la)|-1]$ choose any $\tilde p_i\in\Delta_i$ and define a projection $\theta:\bR^P\to\bR^Q$ by $\theta(x)_{\tilde q_i}=x_{\tilde p_i}$ for all $i$ and $\theta(x)_q=x_{\pi^{-1}(q)}$ for any $q\notin Q^*$ (and hence $\pi^{-1}(q)$ a unique element).

\begin{theorem}\label{standard}
$\theta$ is a unimodular equivalence from $\cR_\la(P,<,<')$ to $\cR_\mu(Q,\ll,\ll')$.
\end{theorem}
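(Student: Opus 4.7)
The plan is to reduce the statement to a bijection of lattice points in every dilation of the two polytopes and then invoke normality. The only genuinely combinatorial ingredient is a key coordinate formula for $\theta$ on the ``fundamental'' vertices.

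First I would establish
\[
\theta(\one_{\max_{<'}J})=\one_{\max_{\ll'}\pi(J)}
\qquad\text{for every }J\in\cJ_\la,
\]
by checking the identity coordinate by coordinate in $\bR^Q$. For $q\notin Q^*$, by the analysis preceding Proposition~\ref{pistar} the preimage $\pi^{-1}(q)$ is a single non-marked element $p$; using that every $\sim$-class sits entirely in or out of any $J\in\cJ_\la$, the definition of $\ll'$ translates $[p\in\max_{<'}J]$ into $[q\in\max_{\ll'}\pi(J)]$. For $q=\tilde q_i\in Q^*$, property (iii) forces $\tilde p_i$ to be $<'$-maximal wherever it appears, and the definition of $\ll'$ automatically makes $\tilde q_i$ an $\ll'$-maximal element of $Q$; both sides therefore reduce to the presence condition $\tilde p_i\in J$, which, since $D(\la)$ is a chain and $\tilde p_i\in\Delta_i=K_i\setminus K_{i-1}$, is equivalent to $\Delta_i\subseteq J\cap P^*$ and hence to $\tilde q_i\in\pi(J)$.

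Next I would combine this formula with Corollary~\ref{fundinctuple}: for nonnegative $\la$ and $k\ge 0$, each integer point of $k\cR_\la(P,<,<')$ is uniquely $x=\sum_{j=1}^{kS}\one_{\max_{<'}J_j}$ for a weakly increasing tuple in $\cJ_\la$ with exactly $k\alpha_{K_r}(\la)$ of the $J_j$ satisfying $J_j\cap P^*=K_r$. Applying $\theta$ termwise yields $\theta(x)=\sum_j\one_{\max_{\ll'}\pi(J_j)}$; since $\pi$ is a lattice isomorphism $\cJ_\la\to\cJ(Q,\ll)$ sending $\{J:J\cap P^*=K_r\}$ bijectively onto $\{M:M\cap Q^*=L_r\}$, and $\alpha_{L_r}(\mu)=\alpha_{K_r}(\la)$ by construction of $\mu$, the resulting tuple meets the hypotheses of Corollary~\ref{fundinctuple} for $k\cR_\mu(Q,\ll,\ll')$. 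Thus $\theta$ restricts to a bijection $k\cR_\la(P,<,<')\cap\bZ^P\to k\cR_\mu(Q,\ll,\ll')\cap\bZ^Q$ for every $k\ge 0$.

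From this I would conclude the unimodular equivalence by a standard argument using normality of both polytopes. Taking convex hulls of lattice points gives $\theta(\cR_\la(P,<,<'))=\cR_\mu(Q,\ll,\ll')$; Ehrhart equality then forces equal affine dimensions, so $\theta$ is an affine bijection between the two affine hulls; and passing to the homogenizations, the bijection of integer points in every dilation becomes a semigroup, hence group, isomorphism of the affine lattices, yielding unimodularity. For $\la$ with negative coordinates I would reduce to the nonnegative case exactly as in Definition~\ref{markeddef}: fix $\beta\in\bZ_{\ge 0}$ with $\la+\beta\om_{P^*}$ nonnegative, apply the above to this marking together with $\mu+\beta\om_{Q^*}$, and use the $J=P$ case of the coordinate formula, $\theta(\one_{\max_{<'}P})=\one_{\max_{\ll'}Q}$, to see that the two prescribed translations agree. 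The hard part is the coordinate formula at $q=\tilde q_i\in Q^*$: it requires careful use of property (iii) and the chain structure of $D(\la)$ to establish $\tilde p_i\in J\iff\tilde q_i\in\pi(J)$, after which the rest is essentially bookkeeping with the machinery already in place.
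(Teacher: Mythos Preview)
Your proposal is correct and follows essentially the same route as the paper: both proofs hinge on the coordinate identity $\theta(\one_{\max_{<'}J})=\one_{\max_{\ll'}\pi(J)}$ for $J\in\cJ_\la$, verified exactly as you describe, and then invoke Corollary~\ref{fundinctuple}. The only difference is in the final step: the paper first observes (via Theorem~\ref{mrppmultiproj} and Lemma~\ref{ashibi}) that the two toric varieties are isomorphic and then says it suffices to biject integer points, whereas you bypass the algebraic machinery and argue directly that bijections in every dilation plus normality force unimodularity. Your version is more self-contained; the paper's is shorter because it leans on results already established.
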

\begin{proof}
By Theorem~\ref{mrppmultiproj} the zero set of $I_{Q,\ll,\ll'}$ in $\bP_\mu$ is the toric variety of $\cR_\mu(Q,\ll,\ll')$. Hence the toric varieties of $\cR_\la(P,<,<')$ and $\cR_\mu(Q,\ll,\ll')$ are isomorphic and it suffices to show that $\theta$ provides a bijection between their integer point sets. In view of Corollary~\ref{fundinctuple} this reduces to showing that \[\theta(\one_{\max_{<'}J})=\one_{\max_{\ll'}\pi(J)}\] for any $J\in\cJ_\la$. Let $J\cap P^*=K_i$, then both $\theta(\one_{\max_{<'}J})_{\tilde q_j}$ and $(\one_{\max_{\ll'}\pi(J)})_{\tilde q_j}$ are 1 when $j\le i$ and 0 otherwise. If $q\notin Q^*$, then $q\in\max_{\ll'}\pi(J)$ if and only if $\pi^{-1}(q)\in\max_{<'}J$, which shows that the coordinates corresponding to $q$ also coincide.
\end{proof}

\begin{definition}
The MRPP $\cR_\la(P,<,<')$ is \textit{standard}, if the poset $(P^*,<)$ is linearly ordered and $D(\la)=\cJ$ (i.e.\ it is the unique maximal chain).
\end{definition}

Since $\cR_\mu(Q,\ll,\ll')$ is standard, we obtain the following fact.

\begin{cor}
Every MRPP is unimodularly equivalent to a standard MRPP.
\end{cor}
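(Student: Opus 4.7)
The plan is to derive this directly from Theorem~\ref{standard}, so the task reduces to verifying that the target MRPP $\cR_\mu(Q,\ll,\ll')$ produced there actually satisfies the two conditions in the definition of ``standard''. Almost all of the required observations have been established in the paragraphs leading up to Theorem~\ref{standard}; the proof is essentially just collecting them.

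First I would recall that by Theorem~\ref{standard} the map $\theta:\bR^P\to\bR^Q$ is a unimodular equivalence $\cR_\la(P,<,<')\to\cR_\mu(Q,\ll,\ll')$, where the poset $(Q,\ll)$ is obtained by quotienting $P$ by the equivalence relation $\sim$, $Q^*=\pi(P^*)$, and $\mu\in\bR^{Q^*}$ is the induced dominant marking defined by $\mu_{\tilde q_i}=\la_p$ for any $p\in\Delta_i$. So it suffices to prove that $\cR_\mu(Q,\ll,\ll')$ is standard.

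For the first condition, the text preceding Theorem~\ref{standard} has already shown that $Q^*$ consists precisely of the $|D(\la)|-1$ elements $\tilde q_i=\pi(\Delta_i)$ with $\tilde q_i\ll\tilde q_j$ whenever $i<j$, so $(Q^*,\ll)$ is linearly ordered. For the second condition, the order ideals in the chain $(Q^*,\ll)$ are precisely the sets $L_i=\{\tilde q_1,\dots,\tilde q_i\}$ for $i=0,\dots,|D(\la)|-1$, and the identification $\alpha_{L_i}(\mu)=\alpha_{K_i}(\la)$ (noted in the paragraph preceding Theorem~\ref{standard}) combined with $K_i\in D(\la)$ for every intermediate $i$ gives $\alpha_{L_i}(\mu)>0$ for all $i\in[1,|D(\la)|-2]$. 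Adding $L_0=\varnothing$ and $L_{|D(\la)|-1}=Q^*$, which belong to $D(\mu)$ by definition, we conclude $D(\mu)=\cJ(Q^*,\ll)$, which in a chain is the unique maximal chain of the distributive lattice. Hence $\cR_\mu(Q,\ll,\ll')$ is standard.

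I do not expect a real obstacle: the content of the corollary is entirely packaged inside Theorem~\ref{standard}, and the only thing to be careful about is confirming that every intermediate $L_i$ actually lies in $D(\mu)$, which in turn relies on the nonvanishing $\alpha_{K_i}(\la)>0$ that is built into the definition of $D(\la)$. Everything else is bookkeeping.
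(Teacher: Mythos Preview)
Your proposal is correct and follows essentially the same approach as the paper. The paper's proof is the single sentence ``Since $\cR_\mu(Q,\ll,\ll')$ is standard, we obtain the following fact,'' relying on the two observations already made in the paragraphs before Theorem~\ref{standard}: that $(Q^*,\ll)$ is linearly ordered and that $D(\mu)=\cJ(Q^*,\ll)$; your write-up simply makes these references explicit.
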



\subsection{Semitoric degenerations}

Similarly to Section~\ref{subdivisions} we construct a family of flat semitoric degenerations of $H_\la(P,<,<')$ with each irreducible component itself a marked relative Hibi variety. 

Consider the ideal $I_\la^m=I_{P,<}^m\cap\bC[\cJ_\la]$. It is generated by products $X_{J_1}X_{J_2}$ with $J_1,J_2\in\cJ_\la$ incomparable and we have $\zeta(I_\la^m)=I_{Q,\ll}^m$. In particular, Lemma~\ref{ashibi} shows that $I_\la^m$ is an initial ideal of $I_\la$. This lets us consider the cone $C(I_\la,I_\la^m)$, in view of Proposition~\ref{groebnercone} its closure consists of $w\in\cR^{\cJ_\la}$ satisfying all inequalities \[w_{J_1}+w_{J_2}\le w_{J_1\cup J_2}+w_{J_1*'J_2}\] with $J_1,J_2\in\cJ_\la$.

\begin{theorem}\label{mrppmain}
Consider $w\in\overline{C(I_\la,I_\la^m)}$ and the initial ideal $\initial_w I_\la$. There exist orders $\ll_1,\dots,\ll_m$ on $Q$ stronger than $\ll$ such that the zero set of $\initial_w I_\la$ in $\bP_\la$ is semitoric with irreducible components isomorphic to the $H_\mu(Q,\ll_i,\ll')$ and the polytopes $\cR_\mu(Q,\ll_i,\ll')$ form a polyhedral subdivision of $\cR_\mu(Q,\ll,\ll')$.
\end{theorem}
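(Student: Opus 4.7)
My plan is to reduce the theorem to its non-marked counterpart (Theorem \ref{maindegen} together with Theorem \ref{partsrpp}) via the standardness reduction of Theorem \ref{standard}, and then upgrade the resulting statements from the projective to the multiprojective setting. To begin, I would invoke Theorem \ref{standard}: the unimodular equivalence $\theta:\cR_\la(P,<,<')\to\cR_\mu(Q,\ll,\ll')$ together with the ideal isomorphism $\xi$ of Lemma \ref{ashibi} and the multiprojective identification $\zeta:\bP_\la\to\bP_\mu$ from Subsection \ref{MRPPtoric} allow me to replace $(P,<,<',\la)$ with the standard quadruple $(Q,\ll,\ll',\mu)$. In the standard case $D(\mu)=\cJ(Q^*,\ll)$, hence $\cJ_\mu=\cJ(Q,\ll)$ and $I_\mu=I_{Q,\ll,\ll'}$ (and $I_\mu^m=I_{Q,\ll}^m$) as ideals in the same polynomial ring, so $\overline{C(I_\la,I_\la^m)}$ is identified with $\overline{C(I_{Q,\ll,\ll'},I_{Q,\ll}^m)}$. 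Now Theorem \ref{maindegen} applied to $(Q,\ll,\ll')$ produces orders $\ll_1,\dots,\ll_m$ stronger than $\ll$ for which $\initial_w I_{Q,\ll,\ll'}$ admits the radical primary decomposition $\bigcap_i I_{V(\cR(Q,\ll_i,\ll')),V(\cR(Q,\ll,\ll'))}$; simultaneously, via Proposition \ref{secondarycone}, Theorem \ref{partsrpp} gives that $\{\cR(Q,\ll_i,\ll')\}$ is the regular subdivision $\Theta_{\cR(Q,\ll,\ll')}(w)$ of $\cR(Q,\ll,\ll')$.

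The component statement of the theorem now follows by upgrading to the multiprojective picture. Each prime $I_{V(\cR(Q,\ll_i,\ll')),V(\cR(Q,\ll,\ll'))}$ is generated by $I_{Q,\ll_i,\ll'}$ together with the variables $X_J$ for $J\notin\cJ(Q,\ll_i)$. Setting these coordinates to zero inside $\bP_\mu$ restricts to the sub-multiprojective space $\bP_{\mu,\ll_i}=\bigtimes_{K\in D(\mu)}\bP(\bC^{V_K\cap\cJ(Q,\ll_i)})$, on which, by Theorem \ref{mrppmultiproj} applied to $(Q,\ll_i,\ll',\mu)$, the remaining equations cut out precisely $H_\mu(Q,\ll_i,\ll')$. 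Since $\initial_w I_\mu$ is radical, its zero set in $\bP_\mu$ is $\bigcup_i H_\mu(Q,\ll_i,\ll')$ and each summand is an irreducible toric variety, yielding the claimed irreducible components. For the subdivision statement I would use Proposition \ref{assection}: it gives $\cR_\mu(Q,\ll,\ll')=U_\mu\cap S\cR(Q,\ll,\ll')$ with $S=\sum_K\alpha_K(\mu)$, and analogously $\cR_\mu(Q,\ll_i,\ll')=U_\mu\cap S\cR(Q,\ll_i,\ll')$, so intersecting the subdivision from Theorem \ref{partsrpp} (rescaled by $S$) with the affine subspace $U_\mu$ produces the desired polyhedral subdivision of $\cR_\mu(Q,\ll,\ll')$.

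The main obstacle is verifying that the projective-to-multiprojective upgrade is legitimate. One must check that conditions (i)--(iii) transfer from $\ll$ to each $\ll_i$ (so that $\cR_\mu(Q,\ll_i,\ll')$, $H_\mu(Q,\ll_i,\ll')$ and Theorem \ref{mrppmultiproj} actually apply), that each sub-multiprojective space $\bP_{\mu,\ll_i}$ is nonempty (i.e.\ $V_K\cap\cJ(Q,\ll_i)\neq\varnothing$ for each $K\in D(\mu)$), and that each slice $\cR_\mu(Q,\ll_i,\ll')$ is full-dimensional in $\cR_\mu(Q,\ll,\ll')$ so that the intersection really yields a polyhedral subdivision. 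Property (i) is clear, (iii) only constrains $\ll'$, and (ii) for $\ll_i$ falls out of the proof of Theorem \ref{partsrpp}. The remaining nonemptiness and full-dimensionality can be seen by choosing, for each linearization $\prec$ of $\ll_i$, the associated simplex $\Delta_{\prec,\ll'}\subset\cR(Q,\ll_i,\ll')$: since $(Q^*,\ll)$ is a chain and any such $\prec$ restricts to the same chain on $Q^*$, every $K\in D(\mu)=\cJ(Q^*,\ll)$ arises as $J\cap Q^*$ for some $J\in C_\prec\subset\cJ(Q,\ll_i)$, which both witnesses the nonemptiness claims and furnishes enough lattice points in the slice to ensure full dimensionality.
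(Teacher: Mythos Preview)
Your proposal is correct and follows essentially the same route as the paper: transport $I_\la$ to $I_{Q,\ll,\ll'}$ via the isomorphism $\xi$ of Lemma~\ref{ashibi}, apply Theorem~\ref{maindegen} and Theorem~\ref{partsrpp} to $(Q,\ll,\ll')$, read off the irreducible components via Theorem~\ref{mrppmultiproj}, and obtain the subdivision of $\cR_\mu(Q,\ll,\ll')$ by slicing the subdivision of $\cR(Q,\ll,\ll')$ with $U_\mu$ using Proposition~\ref{assection}. Your third paragraph (checking that conditions (i)--(iii) persist for each $\ll_i$, that each factor $V_K\cap\cJ(Q,\ll_i)$ is nonempty, and that the slices are full-dimensional) supplies details the paper leaves implicit; the nonemptiness check via a linearization $\prec$ of $\ll_i$ and the linearity of $(Q^*,\ll)$ is exactly the right argument.
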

\begin{proof}
By Theorem~\ref{maindegen} we have partial orders $\ll_i$ on $Q$ such that 
\[
\initial_w I_{Q,\ll,\ll'}=\bigcap_{i=1}^m I_{Q,\ll_i,\ll'}.
\]
The irreducible components of the zero set of $\initial_w I_\la$ in $\bP_\la$ are the zero sets of the ideals $\zeta^{-1}(I_{Q,\ll_i,\ll'})$ which are isomorphic to the zero sets of the $I_{Q,\ll_i,\ll'}$ in $\bP_\mu$. By Theorem~\ref{mrppmultiproj} the latter zero sets are isomorphic to the $H_\mu(Q,\ll_i,\ll')$.

By Theorem~\ref{partsrpp} the relative poset polytopes $\cR(P,\ll_i,\ll')$ form a subdivision of $\cR(P,\ll,\ll')$. By Proposition~\ref{assection}, intersecting this subdivision with the subspace $U_\mu=\{x\in\bR^Q|\forall q\in Q^*: x_q=\mu_q\}$ we obtain a subdivision of $\cR_\mu(P,\ll,\ll')$ with parts $\cR_\mu(P,\ll_i,\ll')$.
\end{proof}


Since the set of radical ideals saturated with respect to the ``irrelevant'' ideals is closed under intersection, one sees that $\initial_w I_\la$ is the vanishing ideal of its zero set. This means that the above theorem describes a family of flat semitoric degenerations of the relative Hibi variety $H_\la(P,<,<')$.

We may consider the preimages $\theta^{-1}(\cR_\mu(Q,\ll_i,\ll'))$ under the unimodular equivalence in Theorem~\ref{standard}. These preimages form a polyhedral subdivision of the original polytope $\cR_\la(P,<,<')$, they are unimodularly equivalent to the $\cR_\mu(Q,\ll_i,\ll')$ and their toric varieties are the irreducible components of the zero set of $\initial_w I_\la$. However, the preimages do not seem to necessarily be MRPPs themself, i.e.\ there does not seem to be a way of lifting the orders $\ll_i$ to $P$ that would provide \[\cR_\la(P,<_i,<')=\theta^{-1}(\cR_\mu(P,\ll_i,\ll')).\] This leads us to consider the polytopes $\cR_\mu(Q,\ll_i,\ll')$ in the theorem. Nonetheless, when $\cR_\la(P,<,<')$ is standard, one sees that the relation $\sim$ is trivial (in view of $\cJ_\la=\cJ$) and we have $<=\ll$ and $<'=\ll'$ (since $Q=P$). In this case we also have $I_\la=I_{P,<,<'}$ and $I_\la^m=I_{P,<}^m$. Theorem~\ref{mrppmain} then has an especially nice form.

\begin{cor}\label{standardmain}
Let $\cR_\la(P,<,<')$ be standard and consider $w\in\overline{C(I_\la,I_\la^m)}$. There exist orders $<_1,\dots,<_m$ on $P$ stronger than $<$ such that the zero set of $\initial_w I_\la$ in $\bP_\la$ is semitoric with irreducible components isomorphic to the $H_\la(P,<_i,<')$ and the polytopes $\cR_\la(P,<_i,<')$ form a polyhedral subdivision of $\cR_\la(P,<,<')$.
\end{cor}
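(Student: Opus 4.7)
The plan is to deduce this directly from Theorem~\ref{mrppmain} by checking that, under the standardness assumption, the data $(Q,\ll,\ll',\mu)$ constructed in Section~\ref{MRPPtoric} reduces to the original data $(P,<,<',\la)$, so that the projection $\theta$ of Theorem~\ref{standard} becomes the identity and no relabelling is necessary.

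First I would verify that $\cJ_\la=\cJ$. Since $\cR_\la(P,<,<')$ is standard, the chain $D(\la)$ equals all of $\cJ^*=\cJ(P^*,<)$, so the condition $J\cap P^*\in D(\la)$ defining $\cJ_\la$ is vacuous. Consequently the equivalence $\sim$ on $P$ is trivial: for any distinct $p_1,p_2\in P$ the principal order ideal of $\max(p_1,p_2)$ in a linearization of $<$ separates them. Therefore $Q=P$, $\pi$ is the identity, and by construction $\ll$ and $\ll'$ coincide with $<$ and $<'$ respectively. Moreover, since $(P^*,<)$ is linearly ordered and $D(\la)$ is the unique maximal chain in $\cJ^*$ of length $|P^*|+1$, each set $\Delta_i=K_i\backslash K_{i-1}$ is a singleton. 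This forces $Q^*=P^*$ and $\mu=\la$. Under these identifications one also has $I_\la=I_{P,<,<'}$ and $I_\la^m=I^m_{P,<}$, so the cone $\overline{C(I_\la,I_\la^m)}$ is precisely the closure of $C(I_{P,<,<'},I^m_{P,<})$ featured in Theorem~\ref{mrppmain}.

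With these identifications in place I would simply invoke Theorem~\ref{mrppmain}: for our chosen $w$ it yields orders $\ll_1,\dots,\ll_m$ on $Q=P$ each stronger than $\ll=<$ such that the zero set of $\initial_w I_\la$ in $\bP_\la$ is semitoric with components $H_\mu(Q,\ll_i,\ll')$ and the polytopes $\cR_\mu(Q,\ll_i,\ll')$ subdivide $\cR_\mu(Q,\ll,\ll')$. Renaming $\ll_i$ as $<_i$ and $\mu$ as $\la$, these are exactly $H_\la(P,<_i,<')$ subdividing $\cR_\la(P,<,<')$ as required; the unimodular equivalence $\theta$ of Theorem~\ref{standard} is the identity, so no passage through a subspace section is needed.

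The statement is a routine specialization and I do not anticipate a serious obstacle; the only point demanding care is confirming that $D(\la)$ being the unique maximal chain in a linearly ordered $\cJ^*$ forces the singleton property of the $\Delta_i$, which in turn is what makes $\sim$ trivial and $\mu=\la$. Once that bookkeeping is made explicit, the corollary is a direct transcription of Theorem~\ref{mrppmain}.
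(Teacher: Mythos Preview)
Your proposal is correct and follows the same approach as the paper: the paper likewise observes that standardness gives $\cJ_\la=\cJ$, whence $\sim$ is trivial, $Q=P$, $\ll=<$, $\ll'=<'$, $\mu=\la$, and then simply reads off the corollary from Theorem~\ref{mrppmain}. One tiny slip: in your separation argument you want the principal order ideal (in a linearization) of the \emph{smaller} of $p_1,p_2$, not the larger, but this does not affect the argument.
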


Here we emphasize that by Theorem~\ref{standard} any MRPP occurring in an application of our construction can be identified with a standard MRPP. This lets one apply the simpler statement found in Corollary~\ref{standardmain} when considering semitoric degenerations. The next subsection serves as an example of such an approach, there we interpret all Gelfand--Tsetlin and FFLV polytopes as standard MRPPs.

\subsection{Applications to flag varieties}\label{flags}

We now apply the results above to realize our initial goal of constructing two families of semitoric degenerations of flag varieties. This is done by extending the approach in Subsection~\ref{grassmannians}. 

Fix an integer $n\ge 1$ and a subset $\bd=\{d_0<\dots<d_l\}$ of $[0,n]$ with $d_0=0$ and $d_l=n$. Consider the product \[\bP_\bd=\bP(\wedge^{d_0}\bC^n)\times\dots\times\bP(\wedge^{d_l}\bC^n),\] the first and last factors are zero-dimensional and are included for consistency. The multihomogeneous coordinate ring $S_\bd$ of $\bP_\bd$ is the polynomial ring in variables $X_{i_1,\dots,i_k}$ with $1\le i_1<\dots<i_k\le n$ and $k\in\bd$. The zero set of the Pl\"ucker ideal $\tilde I_\bd\subset S_\bd$ is the partial flag variety $F_\bd$ of increasing chains of subspaces in $\bC^n$ with dimensions $d_0,\dots,d_l$.

Consider the ideal $\tilde I_\bd^{GT}\subset S_\bd$ generated by all binomials of the form 
\begin{equation}
X_{a_1,\dots,a_k}X_{b_1,\dots,b_l}-X_{\min(a_1,b_1),\dots,\min(a_l,b_l)}X_{\max(a_1,b_1),\dots,\max(a_l,b_l),a_{l+1},\dots,a_k}
\end{equation}
where $k\ge l$.
\begin{theorem}[\cite{GL}]\label{glfull}
The ideal $\tilde I_\bd^{GT}$ is an initial ideal of $\tilde I_\bd$.
\end{theorem}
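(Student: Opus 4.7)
The plan is to mimic the proof of the Grassmannian case (Theorem~\ref{gl}) by exhibiting a weight vector $w \in \bR^{S_\bd}$ such that $\initial_w \tilde I_\bd = \tilde I_\bd^{GT}$. First take $w$ to be a strictly convex separable function of the indices, for instance $w(X_{i_1,\ldots,i_k}) = \sum_{j=1}^k i_j^2$. The identity $\min(a,b)^2 + \max(a,b)^2 = a^2 + b^2$ implies that every binomial generator of $\tilde I_\bd^{GT}$ is $w$-homogeneous, so the inclusion $\tilde I_\bd^{GT} \subseteq \initial_w \tilde I_\bd$ reduces to verifying that each such binomial actually lies in $\tilde I_\bd$. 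The pure Grassmann--Pl\"ucker relations within one dimension are handled exactly as in Theorem~\ref{gl}, and the mixed-degree incidence binomials follow from the standard straightening identities for partial-flag Pl\"ucker coordinates.

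For the reverse inclusion I would run a multigraded Hilbert series comparison. The strategy is to identify $\tilde I_\bd^{GT}$ with a marked Hibi ideal via an extension $\psi_\bd$ of the isomorphism $\psi_\cO$ from Proposition~\ref{psiOisom}: construct a Gelfand--Tsetlin-style marked poset $(P_\bd, <, P_\bd^*)$ and a bijection sending each Pl\"ucker variable $X_{i_1,\ldots,i_k}$ (with $k \in \bd$) to a monomial $t_k \prod_{p \in \max_{<'} J} z_p$, where $J \in \cJ(P_\bd, <)$ is the order ideal whose intersection with $P_\bd^*$ is the fundamental cut corresponding to $k$. Under this bijection $\psi_\bd(\tilde I_\bd^{GT})$ becomes a standard marked Hibi ideal, which is toric and hence prime; its multigraded Hilbert series is then given by Corollary~\ref{fundinctuple} as the enumeration of weakly increasing tuples of order ideals in $\cJ(P_\bd, <)$ with prescribed intersections with $P_\bd^*$.

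The main obstacle is to match this combinatorial enumeration with the multigraded character of $\bC[F_\bd]$, which by Borel--Weil equals the dimension formula for the irreducible $\mathfrak{gl}_n$-representation of highest weight $\sum_{k \in \bd} m_k \omega_k$. For the full flag variety ($\bd = \{0,1,\ldots,n\}$) this is the classical Gelfand--Tsetlin bijection with semistandard Young tableaux. For general $\bd$ one realizes the polytope $\cR_{\la_\bd}(P_\bd, <, <')$ as an affine section of the full-flag Gelfand--Tsetlin polytope, in the sense of Proposition~\ref{assection}, cut out by requiring certain marked entries to vanish; the required integer point count then follows by restricting the full-flag tableau bijection. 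This combinatorial verification, while classical in content, is the principal bookkeeping task of the proof and the step at which care is needed to ensure compatibility of the marking $\la_\bd$ with the multidegree on $S_\bd$.
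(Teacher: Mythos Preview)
The paper does not prove this theorem; it is quoted from~\cite{GL} without argument. Your proposal is therefore not being compared against a proof in the paper, but it still needs to be evaluated on its own merits, and it contains a genuine gap.

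The flaw is in the first step. You choose the weight $w(X_{i_1,\dots,i_k})=\sum_j i_j^2$ precisely so that the binomial generators of $\tilde I_\bd^{GT}$ are $w$-homogeneous, and then say that the inclusion $\tilde I_\bd^{GT}\subseteq\initial_w\tilde I_\bd$ ``reduces to verifying that each such binomial actually lies in $\tilde I_\bd$''. But these binomials do \emph{not} lie in $\tilde I_\bd$ in general. Already for $\Gr(2,4)$ the generator $X_{14}X_{23}-X_{13}X_{24}$ differs from an element of the Pl\"ucker ideal by $X_{12}X_{34}$, since the unique quadratic Pl\"ucker relation is $X_{12}X_{34}-X_{13}X_{24}+X_{14}X_{23}$. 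Similarly, for the incidence binomial $X_2X_{13}-X_1X_{23}$ in the full flag variety of $\bC^3$ one has $X_2X_{13}-X_1X_{23}=X_3X_{12}$ modulo $\tilde I_\bd$, so again the binomial is not in the ideal. The initial-ideal statement holds because each binomial is the $\initial_w$ of a \emph{longer} Pl\"ucker relation for a suitable $w$, not because the binomial itself is a Pl\"ucker relation; a weight making the two terms of the binomial equal is exactly the wrong choice.

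A workable strategy is either to exhibit a weight for which the straightening binomial is the initial form of an actual Pl\"ucker relation (and then run the Hilbert-series equality, for which your second and third paragraphs are on the right track), or to invoke the standard-monomial/SAGBI-basis machinery that underlies~\cite{GL}. Either way, the step ``the binomials lie in $\tilde I_\bd$'' must be replaced.
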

This means that the zero set of $\tilde I_\bd^{GT}$ in $\bP_\bd$ is a flat toric degeneration of $F_\bd$. It is shown in~\cite{KM} that this zero set is, in fact, the toric variety of the corresponding Gelfand--Tsetlin polytope, hence ``GT''.

We also consider the poset $(P_\bd,<)$. Here 
\begin{equation}\label{pbd}
P_\bd=\{p_{d_0+1,d_1},\dots,p_{d_{l-1}+1,d_l}\}\cup\bigcup_{i=1}^{l-1} P_{d_i}
\end{equation}
with the sets $P_{d_i}$ defined as in Subsection~\ref{grassmannians}. Again, we set $p_{i_1,j_1}\le p_{i_2,j_2}$ if and only if $i_1\le i_2$ and $j_1\le j_2$. We use the notation $\tilde p_i=p_{d_{i-1}+1,d_i}$ for the first $l$ elements in right-hand side of~\eqref{pbd}. An example of such a poset can be seen in Example~\ref{gtexample}, in terms of this visualization $\tilde p_i$ is the lowest element lying ``between'' the sets $P_{i-1}$ and $P_i$.

We define a map $\psi_{GT}:S_\bd\to\bC[\cJ(P_\bd,<)]$ by setting 
\begin{equation}\label{psiGT}
\psi_{GT}(X_{a_1,\dots,a_k})=X_{\{p_{i,j}\mid i\le k\text{ and }j\le a_{k+1-i}+i-1\}}.
\end{equation}
In other words, one considers the ideal $L\in\cJ(P_k,<)$ for which $\psi_\cO(X_{a_1,\dots,a_k})=X_L$ (in the notations of Subsection~\ref{grassmannians}) and sets $\psi_{GT}(X_{a_1,\dots,a_k})=X_J$ where $J$ consists of $L$ and all $p_{i,j}\in P_\bd$ with $j\le k$. In particular, if $k=d_j$, then $J$ will contain precisely those $\tilde p_i$ for which $i\le j$.
\begin{example}\label{gtexample}
Let $n=5$ and $\bd=\{0,2,4,5\}$. Below is the Hasse diagram of $(P_\bd,<)$ with the elements $\tilde p_i$ highlighted in red. The order ideal $J$ for which $\psi_{GT}(X_{3,5})=X_J$ consists of the cyan elements together with $p_{1,2}=\tilde p_1$.
\[
\begin{tikzcd}[row sep=.5mm,column sep=2.5mm]
&& && && &&\color{red} p_{5,5}\\
&{\color{red}p_{1,2}}\arrow[dr]&&\color{cyan}p_{2,3}\arrow[dr] && \color{red} p_{3,4}\arrow[dr]&& p_{4,5}\arrow[ur]\\
&&{\color{cyan}p_{1,3}}\arrow[ur]\arrow[dr]&&\color{cyan}p_{2,4}\arrow[dr]\arrow[ur] && p_{3,5}\arrow[ur]\\
&&&{\color{cyan}p_{1,4}}\arrow[dr]\arrow[ur]&&p_{2,5}\arrow[ur]\\
&&&&\color{cyan}p_{1,5}\arrow[ur]
\end{tikzcd}
\]
\end{example}

\begin{proposition}\label{psiGTisom}
$\psi_{GT}$ is an isomorphism and $\psi_{GT}(\tilde I_\bd^{GT})=I^h_{P_\bd,<}$.
\end{proposition}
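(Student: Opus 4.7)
My plan is to adapt the proof of Proposition~\ref{psiOisom} to the flag setup, with the extra bookkeeping needed to accommodate the chain $\tilde p_1<\dots<\tilde p_l$ of ``rank-separating'' elements inside $P_\bd$.

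For the first claim, that $\psi_{GT}$ is an isomorphism of variable sets, I would construct the inverse explicitly. Given $J\in\cJ(P_\bd,<)$, the elements $\tilde p_i$ form a chain in $P_\bd$, so there is a unique $s\in[0,l]$ with $\tilde p_1,\dots,\tilde p_s\in J$ and $\tilde p_{s+1}\notin J$. Set $k=d_s\in\bd$; this is the length of the Plücker index we want to recover. Exactly as in the Grassmannian case, for each $i\in[1,k]$ let $c_i$ be the largest $j$ with $p_{i,j}\in J$ (with $c_i=i-1$ when no such $p_{i,j}$ lies in $J$). Downward closure of $J$ makes $c_1\ge\dots\ge c_k$ weakly decreasing, and $b_i=c_i-(i-1)$ is then a strictly decreasing sequence inside $[1,n]$; setting $(a_1,\dots,a_k)=(b_k,\dots,b_1)$ yields the unique Plücker index with $\psi_{GT}(X_{a_1,\dots,a_k})=X_J$. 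The only additional verification beyond Proposition~\ref{psiOisom} is that the ``lower layer'' $\{p_{i,j}\in P_\bd:j\le k\}$ is automatically contained in $J$ once $\tilde p_s\in J$ and $k=d_s$, which follows from downward closure since every such $p_{i,j}$ is dominated by $\tilde p_s$ or by another element of $J$ forced by the $c_i$.

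For the second claim, the plan is a direct coordinatewise computation. Given Plücker indices $a$ and $b$ with $k\ge l$, set $J_1=\psi_{GT}^{-1}(X_a)$, $J_2=\psi_{GT}^{-1}(X_b)$, and write out the $c$-vectors of both sides of the generator of $\tilde I_\bd^{GT}$. The min-sequence of length $l$ should correspond to the coordinatewise minima of the $c_i$'s on the overlapping range $i\in[1,l]$, which is the $c$-vector of $J_1\cap J_2$; the max-sequence of length $k$, with its tail $a_{l+1},\dots,a_k$, should correspond to coordinatewise maxima on $[1,l]$ followed by $c_i^{(1)}$ on $[l+1,k]$, which is the $c$-vector of $J_1\cup J_2$. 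This identifies the image of the generator under $\psi_{GT}$ with the Hibi generator~\eqref{hibibinomial} attached to $J_1$ and $J_2$, giving $\psi_{GT}(\tilde I_\bd^{GT})=I^h_{P_\bd,<}$.

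The step I expect to require the most care is keeping the bookkeeping around the chain $\tilde p_1<\dots<\tilde p_l$ straight: one must verify that the min- and max-Plücker indices appearing in the generator are genuinely strictly increasing with lengths in $\bd$, and that the tail $a_{l+1},\dots,a_k$ is correctly attributed to those rows of $J_1$ that do not appear in $J_2$. Once this combinatorial setup is in hand, the argument reduces, row by row, to the $k=l$ Grassmannian computation already carried out in the proof of Proposition~\ref{psiOisom}.
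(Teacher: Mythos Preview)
Your proposal is correct and takes essentially the same approach as the paper's proof: establish the bijection on variables by first reading off which $\tilde p_i$ lie in $J$ (fixing $k=d_s\in\bd$) and then invoking the Grassmannian computation of Proposition~\ref{psiOisom}, and then verify directly that each generator of $\tilde I_\bd^{GT}$ is carried by $\psi_{GT}$ to the Hibi binomial $d(J_1,J_2)$. Your writeup is a bit more explicit on constructing the inverse and on the monotonicity bookkeeping, but structurally the argument is identical to the paper's.
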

\begin{proof}
From the above description of $\psi_{GT}$ and Proposition~\ref{psiOisom} we see that $\psi_{GT}$ establishes a bijection between variables of the form $X_{a_1,\dots,a_{d_j}}$ and $X_J$ for which $X_J$ contains exactly $j$ of the elements $\tilde p_i$. The first claim follows.

Consider $X_{a_1,\dots,a_k},X_{b_1,\dots,b_l}\in S_\bd$ with $k\ge l$ and let $\psi_{GT}(X_{a_1,\dots,a_k})=X_{J_1}$ and $\psi_{GT}(X_{b_1,\dots,b_l})=X_{J_2}$. It is straightforward from the definition that \[\psi_{GT}(X_{\min(a_1,b_1),\dots,\min(a_l,b_l)})=X_{J_1\cap J_2}\] and \[\psi_{GT}(X_{\max(a_1,b_1),\dots,\max(a_l,b_l),a_{l+1},\dots,a_k})=X_{J_1\cup J_2}.\] This proves the second statement.
\end{proof}


Set $P_\bd^*=\{\tilde p_1,\dots,\tilde p_l\}$ and for $i\in[0,l]$ set $K_i=\{\tilde p_1,\dots,\tilde p_i\}$. Consider \[\la=\om_{K_1}+\dots+\om_{K_l}\in\bR^{P_\bd^*}\] (so that $\la_{\tilde p_i}=i$) and the MRPP $\cR_\la(P,<,<_\varnothing)$ for the trivial order $<_\varnothing$. This MRPP is evidently standard and in the notations of Subsection~\ref{MRPPtoric} we have \[D(\la)=\{K_0,\dots,K_l\},\] $\cJ_\la=\cJ(P_\bd,<)$ and $I_\la=I^h_{P_\bd,<}$.

Note that the monomial ideal $\tilde I^m_1=\psi_{GT}^{-1}(I_{P_\bd,<}^m)$ is an initial ideal of $\tilde I^{GT}_\bd$ and, subsequently, of $\tilde I_\bd$. We may now combine Corollary~\ref{standardmain} with Proposition~\ref{psiGTisom} to describe all Gr\"obner degenerations of $F_\bd$ intermediate between the toric degeneration given by $\tilde I^{GT}_\bd$ and the monomial degeneration given by $\tilde I^m_1$. Here we view $\cR_\la(P,<,<_\varnothing)$ as the Gelfand--Tsetlin polytope corresponding to the $\mathfrak{gl}_n$-weight $\om_{d_1}+\dots+\om_{d_{l-1}}$, see Corollary~\ref{gtfflv}.
\begin{theorem}\label{mainGT}
Consider $w\in\overline{C(\tilde I^{GT}_\bd,\tilde I^m_1)}$. There exist orders $<_1,\dots,<_m$ on $P_\bd$ stronger than $<$ such that the zero set of $\initial_w \tilde I^{GT}_\bd$ in $\bP_\bd$ is semitoric with irreducible components isomorphic to the $H_\la(P_\bd,<_i,<_\varnothing)$ and the polytopes $\cR_\la(P_\bd,<_i,<_\varnothing)$ form a polyhedral subdivision of the Gelfand--Tsetlin polytope $\cR_\la(P_\bd,<,<_\varnothing)$.
\end{theorem}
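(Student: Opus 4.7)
The plan is to reduce Theorem~\ref{mainGT} to Corollary~\ref{standardmain} through the isomorphism $\psi_{GT}$ of Proposition~\ref{psiGTisom}. First I will verify that the MRPP $\cR_\la(P_\bd,<,<_\varnothing)$ is standard, which (together with observations already made in the text) will identify $I_\la$ with $I^h_{P_\bd,<}$ and $I_\la^m$ with $I^m_{P_\bd,<}$. Then I will match the multiprojective structure on $\bP_\bd$ with that on $\bP_\la$. Finally I will apply Corollary~\ref{standardmain} to the transported weight and pull the conclusion back through $\psi_{GT}$.

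For standardness, the marked subposet $(P_\bd^*,<)$ is the linearly ordered chain $\tilde p_1<\dots<\tilde p_l$ (since $\tilde p_i=p_{d_{i-1}+1,d_i}$ and the indices are increasing), and the chain $D(\la)=\{K_0\subset K_1\subset\dots\subset K_l\}$ is by construction exactly $\cJ(P_\bd^*,<)$. Both requirements of standardness therefore hold, and as already spelled out in the paragraph preceding the theorem this gives $\cJ_\la=\cJ(P_\bd,<)$, $I_\la=I^h_{P_\bd,<}$ and $I_\la^m=I^m_{P_\bd,<}$.

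For the multiprojective match, formula~\eqref{psiGT} shows that every Pl\"ucker variable $X_{a_1,\dots,a_{d_j}}$, which lives in the $j$-th factor $\bP(\wedge^{d_j}\bC^n)$ of $\bP_\bd$, is sent to an $X_J$ with $J\cap P_\bd^*=K_j$; such a variable labels a coordinate in the factor $\bP(\bC^{V_{K_j}})$ of $\bP_\la$. Combined with Proposition~\ref{psiGTisom} and the definition $\tilde I^m_1=\psi_{GT}^{-1}(I^m_{P_\bd,<})$, this promotes $\psi_{GT}$ to an isomorphism $\bP_\bd\simeq\bP_\la$ of multiprojective spaces under which $\tilde I^{GT}_\bd\leftrightarrow I_\la$ and $\tilde I^m_1\leftrightarrow I_\la^m$; the induced linear map on weight spaces identifies $\overline{C(\tilde I^{GT}_\bd,\tilde I^m_1)}$ with $\overline{C(I_\la,I_\la^m)}$ and commutes with the operation of taking initial ideals.

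Given $w\in\overline{C(\tilde I^{GT}_\bd,\tilde I^m_1)}$, the corresponding weight lies in $\overline{C(I_\la,I_\la^m)}$ and fulfils the hypotheses of Corollary~\ref{standardmain}. The corollary produces orders $<_1,\dots,<_m$ on $P_\bd$ stronger than $<$ such that the zero set of $\initial_w I_\la$ in $\bP_\la$ is semitoric with components $H_\la(P_\bd,<_i,<_\varnothing)$ and such that the polytopes $\cR_\la(P_\bd,<_i,<_\varnothing)$ subdivide $\cR_\la(P_\bd,<,<_\varnothing)$. Pulling back along $\psi_{GT}$ yields the claim, with the Gelfand--Tsetlin interpretation of $\cR_\la(P_\bd,<,<_\varnothing)$ taken from the discussion preceding the theorem. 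There is no deep obstacle in this argument; the one place that demands care is the matching of the multihomogeneous gradings, which is forced by the fact that the chain $D(\la)$ is in bijection with the grouping of Pl\"ucker variables by dimension in $\bd$.
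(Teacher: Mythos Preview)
Your proof is correct and follows precisely the approach the paper indicates: the paper states the theorem immediately after noting that the MRPP $\cR_\la(P_\bd,<,<_\varnothing)$ is standard, that $\cJ_\la=\cJ(P_\bd,<)$ and $I_\la=I^h_{P_\bd,<}$, and that one may ``combine Corollary~\ref{standardmain} with Proposition~\ref{psiGTisom}'' --- exactly what you have written out. You have simply supplied the details the paper leaves implicit, including the identification of the multihomogeneous gradings on $\bP_\bd$ and $\bP_\la$ via $\psi_{GT}$.
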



We move on to the second family of degenerations. Let the order $<'$ on $P_\bd$ be such that $p<'q$ if and only if $p<q$ and $p\notin P_\bd^*$. 
We define another map $\psi_{FFLV}$ (Feigin--Fourier--Littelmann--Vinberg, see below) from $S_\bd$ to $\bC[\cJ(P_\bd,<)]$. Consider $J\in\cJ(P_\bd,<)$ and let $J\cap P_\bd^*=K_j$, then $J$ consists of $J\cap P_{d_j}$ (which is an order ideal in $(P_{d_j},<)$) and all $p_{i,j}\in P_\bd$ with $j\le d_j$. We have a unique $X_{\alpha_1,\dots,\alpha_{d_j}}$ for which $\psi_{\cC}(X_{\alpha_1,\dots,\alpha_{d_j}})=X_{J\cap P_{d_j}}$ (see Subsection~\ref{grassmannians}), set $\psi_{FFLV}(X_{\alpha_1,\dots,\alpha_{d_j}})=X_J$. The fact that $\psi_\cC$ is an isomorphism from $S_k$ to $\bC[\cJ(P_k,<)]$ for any $k$ immediately implies that $\psi_{FFLV}$ is itself an isomorphism from $S_\bd$ to $\bC[\cJ(P_\bd,<)]$.
\begin{example}
Again let $n=5$ and $\bd=\{0,2,4,5\}$. The order ideal $J$ for which $\psi_{FFLV}(X_{1,5,3,4})=X_J$ consists of the cyan elements below together with $p_{1,2}=\tilde p_1$ and $p_{3,4}=\tilde p_2$.
\[
\begin{tikzcd}[row sep=.5mm,column sep=2.5mm]
&& && && &&\color{red} p_{5,5}\\
&{\color{red}p_{1,2}}\arrow[dr]&&\color{cyan}p_{2,3}\arrow[dr] && \color{red} p_{3,4}\arrow[dr]&& p_{4,5}\arrow[ur]\\
&&{\color{cyan}p_{1,3}}\arrow[ur]\arrow[dr]&&\color{cyan}p_{2,4}\arrow[dr]\arrow[ur] && p_{3,5}\arrow[ur]\\
&&&{\color{cyan}p_{1,4}}\arrow[dr]\arrow[ur]&&\color{cyan}p_{2,5}\arrow[ur]\\
&&&&\color{cyan}p_{1,5}\arrow[ur]
\end{tikzcd}
\]
\end{example}

Consider the ideals $I_{P_\bd,<,<'}$ and $\tilde I^{FFLV}_\bd=\psi_{FFLV}^{-1}(I_{P_\bd,<,<'})$. The following fact is essentially due to~\cite{FFL2} and~\cite{fffm}.

\begin{theorem}
$\tilde I^{FFLV}_\bd$ is an initial ideal of $\tilde I_\bd$.
\end{theorem}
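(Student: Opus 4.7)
The plan is to reduce this statement to the existing toric-degeneration results of \cite{FFL2, fffm} by identifying the MRPP $\cR_\la(P_\bd,<,<')$ with the classical FFLV polytope of the $\mathfrak{gl}_n$-weight $\om_{d_1}+\dots+\om_{d_{l-1}}$ under the bijection $\psi_{FFLV}$. Once this identification is in place, the theorem is obtained by interpreting the cited results as producing a weight vector $w\in\bR^{S_\bd}$ with $\initial_w\tilde I_\bd=\tilde I^{FFLV}_\bd$.

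First I would unpack $\cR_\la(P_\bd,<,<')$. By Definition~\ref{markeddef}, it is the Minkowski sum $\sum_{j=1}^{l-1}\cR_{\om_{K_j}}(P_\bd,<,<')$. Using Example~\ref{fundexample} and the chosen $<'$ (which makes every $\tilde p_i$ $<'$-incomparable with everything else), each summand projects to the chain polytope $\cC(P_{d_j},<)$; indeed, after removing the already-fixed coordinates indexed by $P_\bd^*$ and by the points forced to be $0$ or $1$, the defining inequalities are precisely those of the FFLV polytope of the fundamental weight $\om_{d_j}$. Hence the MRPP is (up to an integer translation) the Minkowski sum of the fundamental FFLV polytopes, which is the FFLV polytope of $\om_{d_1}+\dots+\om_{d_{l-1}}$. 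Next I would match the integer-point parametrizations: by Corollary~\ref{fundinctuple}, integer points of $m\cR_\la(P_\bd,<,<')$ are in bijection with weakly increasing chains $J_1\subset\dots\subset J_{ml}$ in $\cJ(P_\bd,<)$ in which exactly $m$ ideals meet $P_\bd^*$ in each $K_j$; via $\psi_{FFLV}$ these correspond to the standard FFLV-tableau basis for the Cartan component of $V(\om_{d_1}+\dots+\om_{d_{l-1}})^{\otimes m}$.

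With this identification, Theorem~\ref{mrppmultiproj} applied to $\la$ yields that, inside $\bC[\cJ_\la]=\bC[\cJ(P_\bd,<)]$, the ideal $I_\la=I_{P_\bd,<,<'}$ is the multihomogeneous vanishing ideal of the toric variety of $\cR_\la(P_\bd,<,<')$ in $\bP_\bd$, i.e.\ of the FFLV toric variety. By \cite{FFL2} this toric variety is a flat degeneration of $F_\bd$, and the refinement given in \cite{fffm} produces an explicit weight (essentially the $-$degree of the PBW-filtration on the Plücker coordinates, reindexed by $\psi_{FFLV}$) whose initial ideal on $S_\bd$ is multihomogeneous, binomial and vanishes on that toric variety. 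Combining multihomogeneity with Theorem~\ref{hilbert} and the saturatedness of $\tilde I^{FFLV}_\bd$ (inherited from the prime toric ideal $I_{P_\bd,<,<'}$) forces the inclusion $\tilde I^{FFLV}_\bd\subseteq\initial_w\tilde I_\bd$ to be an equality, since both define the same irreducible toric subvariety of $\bP_\bd$ and both are the vanishing ideals thereof.

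The main obstacle is purely bookkeeping: one must carefully verify that the labeling $\psi_{FFLV}$ of the Plücker variables agrees with the Dyck-path / admissible-multi-exponent labeling used in \cite{FFL1, FFL2}, so that the binomials cut out in \cite{fffm} for Grassmannians extend to the binomials generating $I_{P_\bd,<,<'}$ (which, by Theorem~\ref{quadratic}, are the quadratic relations $X_{J_1}X_{J_2}-X_{J_1\cup J_2}X_{J_1*'J_2}$). Once this matching is checked at the level of fundamental weights, compatibility with Minkowski sums guarantees it in the partial-flag setting, and no further computation is required beyond the references.
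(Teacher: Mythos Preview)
Your overall strategy---identify $\cR_\la(P_\bd,<,<')$ with the FFLV polytope, then invoke \cite{FFL2,fffm}---is morally right, but the final step contains a genuine gap and the route is more circuitous than necessary.

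The gap is in the direction of the inclusion and in what you actually know about $\initial_w\tilde I_\bd$. From your description, \cite{fffm} gives a weight $w$ for which $\initial_w\tilde I_\bd$ is binomial and \emph{vanishes on} the FFLV toric variety; this yields $\initial_w\tilde I_\bd\subseteq\tilde I^{FFLV}_\bd$ (since the latter is the vanishing ideal), not the inclusion $\tilde I^{FFLV}_\bd\subseteq\initial_w\tilde I_\bd$ that you state. To close the gap you then claim ``both are the vanishing ideals thereof'', but you have only argued that $\tilde I^{FFLV}_\bd$ is radical and saturated; you have said nothing about radicality or saturatedness of $\initial_w\tilde I_\bd$, nor have you excluded that its zero set is strictly larger than the FFLV variety. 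Theorem~\ref{hilbert} applied to $\tilde I^{FFLV}_\bd$ does not help here. A Hilbert-series comparison would rescue the argument, but that requires the full strength of the FFLV dimension formula, which you do not invoke.

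The paper's proof avoids all of this by bypassing the polytope identification entirely. It observes directly that, under $\psi_{FFLV}$, the map $\varphi_{P_\bd,<,<'}$ of~\eqref{relkernel} becomes the explicit monomial map
\[
X_{\alpha_1,\dots,\alpha_{d_j}}\longmapsto t\prod_{\alpha_i>d_j} z_{p_{i,\alpha_i}}\prod_{i=1}^j z_{\tilde p_i},
\]
so that $\tilde I^{FFLV}_\bd$ is literally the kernel of this map. Then \cite[Theorem~5.1]{fffm} is quoted in the form ``this specific kernel is an initial ideal of the Pl\"ucker ideal'', which finishes the proof in one line. In other words, \cite{fffm} already asserts equality of $\initial_w\tilde I_\bd$ with the toric ideal, not merely an inclusion, and the only work left is the bookkeeping you mention: checking that $\psi_{FFLV}$ matches the labeling in \cite{fffm}. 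Your longer geometric detour through Minkowski sums, Theorem~\ref{mrppmultiproj} and Theorem~\ref{hilbert} is not needed once the kernel description is written down.
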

\begin{proof}
The definition of $\psi_{FFLV}$ can be reformulated as follows. Consider a subset of $[1,n]$ with $d_j$ elements and let $\alpha_1,\dots,\alpha_{d_j}$ be the ordering of this subset for which $\alpha_i=i$ whenever $\alpha_i\le d_j$ while the remaining elements are ordered decreasingly. Then $\psi_{FFLV}(X_{\alpha_1,\dots,\alpha_{d_j}})=X_J$ where $J$ is the order ideal generated by the $p_{i,\alpha_i}$ for which $\alpha_i>d_j$ and the elements $\tilde p_1,\dots,\tilde p_j$. Moreover, the specified generating set is precisely $\max_{<'}J$. Therefore, in view of~\eqref{relkernel}, $\tilde I^{FFLV}_\bd$ is the kernel of the map from $S_\bd$ to $\bC[P_\bd,t]$ taking such a $X_{\alpha_1,\dots,\alpha_k}$ to \[t\prod_{\alpha_i>d_j} z_{p_{i,\alpha_i}}\prod_{i= 1}^j z_{\tilde p_i}.\] The fact that the kernel of such a map is an initial ideal of the Pl\"ucker ideal is proved in~\cite[Theorem 5.1]{fffm} (modulo a minor change of variables). It is also explained there that the zero set of this kernel in $\bP_\bd$ is the toric variety of the corresponding FFLV polytope, hence the notation.
\end{proof}

Consider the monomial ideal $\tilde I^m_2=\psi_{FFLV}^{-1}(I_{P_\bd,<}^m)$, it is is an initial ideal of $\tilde I^{FFLV}_\bd$ and $\tilde I_\bd$. Similarly to Theorem~\ref{mainGT} we describe all degenerations intermediate between the toric and the monomial one (again, here we view $\cR_\la(P,<,<')$ as the FFLV polytope corresponding to $\om_{d_1}+\dots+\om_{d_{l-1}}$, see Corollary~\ref{gtfflv}).
\begin{theorem}\label{mainFFLV}
Consider $w\in\overline{C(\tilde I^{FFLV}_\bd,\tilde I^m_2)}$. There exist orders $<_1,\dots,<_m$ on $P_\bd$ stronger than $<$ such that the zero set of $\initial_w \tilde I^{FFLV}_\bd$ in $\bP_\bd$ is semitoric with irreducible components isomorphic to the $H_\la(P_\bd,<_i,<')$ and the polytopes $\cR_\la(P_\bd,<_i,<')$ form a polyhedral subdivision of the FFLV polytope $\cR_\la(P_\bd,<,<')$.
\end{theorem}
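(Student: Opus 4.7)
The strategy is to derive Theorem \ref{mainFFLV} from Corollary \ref{standardmain} in direct analogy with the derivation of Theorem \ref{mainGT}. The key will be to show that $\cR_\la(P_\bd, <, <')$ is a standard MRPP and that $\psi_{FFLV}$ identifies the ideals $\tilde I^{FFLV}_\bd$ and $\tilde I^m_2$ with $I_\la$ and $I_\la^m$, respectively, together with a corresponding identification of the ambient multiprojective spaces.

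First I would verify standardness. The poset $(P_\bd^*, <)$ is linearly ordered by $\tilde p_1 < \dots < \tilde p_l$, and $\la = \om_{K_1} + \dots + \om_{K_l}$ immediately yields $D(\la) = \{K_0, \ldots, K_l\} = \cJ(P_\bd^*, <)$, the unique maximal chain. Next I would check that $<'$ satisfies conditions (i)-(iii) from the start of Section \ref{marked}: (i) is obvious and (iii) holds by construction of $<'$. The nontrivial requirement is (ii), i.e.\ closedness of $\cJ(P_\bd, <)$ under $*_{P_\bd, <'}$. For $J_1, J_2\in\cJ(P_\bd,<)$, take $q \in J_1 *' J_2$ and $r < q$ in $P_\bd$; then $r \in J_1 \cap J_2$ since both are $<$-ideals. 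If $r \in P_\bd^*$, property (iii) makes $r$ a $<'$-maximal element in each of $J_1, J_2$, so $r$ already lies in the generating antichain of $J_1 *' J_2$. If $r \notin P_\bd^*$, pick $a$ in that antichain with $q \le' a$; since $<'$ is weaker than $<$ we have $r < a$, and $r \notin P_\bd^*$ yields $r <' a$ by the definition of $<'$. In either case $r \in J_1 *' J_2$.

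Next I would verify the identifications under $\psi_{FFLV}$. By the reformulation in the proof preceding Theorem \ref{mainFFLV}, $\tilde I^{FFLV}_\bd = \psi_{FFLV}^{-1}(I_{P_\bd, <, <'})$, and for standard MRPPs one has $\cJ_\la = \cJ(P_\bd,<)$, hence $I_\la = I_{P_\bd, <, <'}$ and $I_\la^m = I^m_{P_\bd, <}$. Thus $\psi_{FFLV}$ matches the ideals. For the ambient spaces, $\bP_\la$ has $l+1$ factors indexed by $K_0, \ldots, K_l$, with the factor for $K_i$ being $\bP(\bC^{V_{K_i}})$ where $V_{K_i}$ is in bijection with $\{J \in \cJ(P_\bd, <) : J \cap P_\bd^* = K_i\}$. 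By the explicit definition of $\psi_{FFLV}$, this set corresponds under $\psi_{FFLV}^{-1}$ to the Pl\"ucker variables $X_{\alpha_1, \ldots, \alpha_{d_i}}$, which are the coordinates of the $i$-th factor $\bP(\wedge^{d_i} \bC^n)$ of $\bP_\bd$. Hence $\psi_{FFLV}$ induces a multihomogeneous isomorphism $\bP_\bd \simeq \bP_\la$.

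With these identifications in place, for $w \in \overline{C(\tilde I^{FFLV}_\bd, \tilde I^m_2)}$ the corresponding element lies in $\overline{C(I_\la, I_\la^m)}$, so Corollary \ref{standardmain} provides orders $<_1, \ldots, <_m$ on $P_\bd$ stronger than $<$ such that the zero set of $\initial_w I_\la$ in $\bP_\la$ is semitoric with components $H_\la(P_\bd, <_i, <')$ and the $\cR_\la(P_\bd, <_i, <')$ form a subdivision of $\cR_\la(P_\bd, <, <')$. Transporting back via $\psi_{FFLV}$ yields exactly the statement of the theorem. The main potential obstacle is the verification of (ii); the remaining work is essentially combinatorial bookkeeping confirming compatibility of the FFLV poset structure with $<'$ and with the product structure on $\bP_\bd$.
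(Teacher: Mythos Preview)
Your proposal is correct and follows the same approach as the paper, which simply states that Theorem~\ref{mainFFLV} is proved ``similarly to Theorem~\ref{mainGT}'' by combining Corollary~\ref{standardmain} with the isomorphism $\psi_{FFLV}$. In fact you are more explicit: the paper does not verify condition~(ii) for $<'$ at this point (it only does so later, in the proposition preceding Theorem~\ref{mcopasmrpp}, in the more general MCOP setting), whereas you supply a direct argument here.
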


Note that in accordance with Remark~\ref{chainorder} all polytopes appearing in Theorem~\ref{mainGT} are marked order polytopes but the polytopes in Theorem~\ref{mainFFLV} are more general MRPPs.

\subsection{Comparison with Fang--Fourier--Litza--Pegel polytopes}

In this subsection we compare MRPPs with the polytopes studied in~\cite{FF,FFLP,FFP}. Our goal is to show that our family is more general in an essential way: the earlier construction is insufficient for our description of semitoric degenerations of flag varieties.

Consider a finite poset $(P,<)$ and a subset of marked elements $P^*\subset P$ containing all minimal and maximal elements. We also choose a dominant marking $\la\in\bZ^{P^*}$ and a partition $C\sqcup O=P\backslash P^*$. Again denote $\cJ=\cJ(P,<)$ and $\cJ^*=\cJ(P^*,<)$. In the following definition we dualize the poset, i.e.\ in~\cite{FFLP} the same polytope corresponds to the dual poset $(P,>)$ together with the data $C$, $O$, $\la$.
\begin{definition}[{\cite[Proposition 1.3]{FFLP}}]\label{mcopdef}
The \textit{marked chain-order polytope} (\textit{MCOP}) $\cO_{C,O}(P,<,\la)\subset\bR^P$ consists of points $x$ such that:
\begin{enumerate}
\item $x_p=\la_p$ for all $p\in P^*$,
\item $x_p\ge 0$ for all $p\in C$,
\item for every chain $a<p_1<\dots<p_r<b$ with $a,b\in P^*\sqcup O$ and all $p_i\in C$ one has \[x_{p_1}+\dots+x_{p_r}\le x_a-x_b.\]
\end{enumerate}
\end{definition}

We note that in~\cite{FFLP} the chain in (3) is required to be saturated but one easily checks that this implies the statement for any such chain (see footnote 2 in~\cite{FFP}). Before directly comparing MCOPs with MRPPs we show that the former have some similar properties. 
The following fact similar to Definition~\ref{markeddef} is obtained from~\cite{FFP} by noting that order ideals in $(P,<)$ are precisely order filters in $(P,>)$.

\begin{proposition}[{\cite[Definition 2.4 and Theorem 2.8]{FFP}}]\label{elementarydecomp}
There exists a strictly decreasing chain in $\cJ^*$ \[P^*=K_0\supset\dots\supset K_k\neq\varnothing\] and integers $c_0,\dots,c_k$ such that $\la=\sum_{i=0}^kc_i\om_{K_i}$. These $K_i$ and $c_i$ satisfy \[\cO_{C,O}(P,<,\la)=c_0\cO_{C,O}(P,<,\om_{K_0})+\dots+c_k\cO_{C,O}(P,<,\om_{K_k}).\]
\end{proposition}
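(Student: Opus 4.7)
The proof has two parts that I would treat separately: the combinatorial decomposition of the marking $\la$ and the geometric Minkowski-additivity identity for the associated MCOPs. The first is a direct analogue of Proposition~\ref{markingdecomp}, while the second is the technically substantial step.

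For the decomposition, I would use a level-set argument. Let $v_1>v_2>\dots>v_m$ be the distinct values attained by $\la$ on $P^*$. Since $\la$ is dominant, each super-level set $\{p\in P^* \mid \la_p\ge v_j\}$ is an order ideal in $(P^*,<)$. Ordering these by reverse inclusion yields a strictly decreasing chain $P^*=K_0\supset K_1\supset\dots\supset K_{m-1}\ne\varnothing$ (with $K_0$ the full level set corresponding to the minimal value $v_m$), and a direct check shows
\[
\la = v_m\omega_{K_0} + \sum_{i=1}^{m-1}(v_{m-i}-v_{m-i+1})\omega_{K_i},
\]
giving integer coefficients $c_i$ with $c_i>0$ for $i\ge 1$ and $c_0=v_m$ an arbitrary integer. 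Uniqueness of the $K_i$ and the $c_i$ is immediate since the $K_i$ are recovered as the super-level sets of $\la$.

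For the Minkowski-additivity identity I would prove the two inclusions separately. The inclusion $\sum c_i\cO_{C,O}(P,<,\omega_{K_i})\subseteq\cO_{C,O}(P,<,\la)$ is routine: the defining conditions in Definition~\ref{mcopdef} are linear inequalities of the correct sign, so they are stable under the weighted Minkowski sum, and the marking condition on $P^*$ becomes the chain decomposition identity itself. The reverse inclusion is the main obstacle. Given $x\in\cO_{C,O}(P,<,\la)$, my plan is to proceed by induction on $k=m-1$, peeling off the innermost layer: construct $y^{(k)}\in\cO_{C,O}(P,<,\omega_{K_k})$ by an explicit piecewise-linear recipe in $x$---forced on $P^*$, a clipped version of $x$ on $O$ determined by whether an element lies in the ``slab'' between $K_{k-1}$ and $K_k$, and a corresponding redistribution of chain sums on $C$---so that the residual $x-c_ky^{(k)}$ lies in $\cO_{C,O}(P,<,\la-c_k\omega_{K_k})$, after which induction applies. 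The technical difficulty, which I expect to be the heart of the proof in~\cite{FFP}, is verifying that the residual still satisfies every chain inequality: the clipping on $O$ and the redistribution on $C$ must interact compatibly across chains that mix $C$ and $O$ elements, a transfer-map-style coupling governed by the jump $c_k$ in marking values. A possibly negative $c_0$ is handled at the end by a translation trick analogous to that in Definition~\ref{markeddef}.
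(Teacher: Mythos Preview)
The paper does not give its own proof of this proposition: it is stated with attribution to~\cite[Definition 2.4 and Theorem 2.8]{FFP}, the only additional remark being that one passes from order filters in $(P,>)$ to order ideals in $(P,<)$. There is therefore nothing in the paper to compare your argument against beyond this reference.

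That said, your proposal is sound in outline. The level-set decomposition of $\la$ is exactly the right construction and matches what is done in the cited source (and is the same mechanism as Proposition~\ref{markingdecomp} in the paper). For the Minkowski identity, your plan---easy inclusion by linearity of the defining inequalities, hard inclusion by inductively peeling off the innermost fundamental summand via an explicit transfer map---is the standard strategy and is essentially how~\cite{FFP} proceeds. You correctly identify the genuine content as the verification that the residual $x-c_k y^{(k)}$ still satisfies all chain inequalities of Definition~\ref{mcopdef}; this is indeed where the work lies, and you have not carried it out here, but you accurately flag it rather than glossing over it. The treatment of a possibly negative $c_0$ by translation is also the right move.
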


Furthermore, the MCOP given by a fundamental marking admits a description similar to Definition~\ref{funddef}.
\begin{proposition}\label{elementary}
For $K\in\cJ^*$ the polytope $\cO_{C,O}(P,<,\om_K)$ is the convex hull of all points of the form $\mathbf 1_{A(J)}$ where $J\in\cJ$ satisfies $J\cap P^*=K$ and \[A(J)=(J\cap (P^*\cup O))\cup\max\nolimits_<J.\]
\end{proposition}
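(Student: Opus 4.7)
The plan is to prove both inclusions between $\cO_{C,O}(P,<,\omega_K)$ and the convex hull $V$ of the points $\mathbf 1_{A(J)}$. The forward inclusion is a direct verification, while the reverse will follow from identifying the integer points of the polytope and invoking the integrality of its vertices.

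For $V\subset\cO_{C,O}(P,<,\omega_K)$, I would check that each $\mathbf 1_{A(J)}$ with $J\cap P^*=K$ satisfies Definition~\ref{mcopdef}. Condition (1) is immediate from $A(J)\cap P^*=J\cap P^*=K$, and (2) is automatic since coordinates lie in $\{0,1\}$. For (3) on a chain $a<p_1<\dots<p_r<b$ with $a,b\in P^*\cup O$ and $p_i\in C$, I would split on where $J$ meets $\{a,b\}$. If $b\in J$, the lower-set property forces $a,p_1,\dots,p_r\in J$ and the presence of $b\in J$ above each $p_i$ excludes it from $\max_<J$, so both sides of the inequality are $x_a-x_b=0$. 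If $a\in J$ but $b\notin J$, at most one $p_i$ can lie in $\max_<J$ (two would be comparable in $J$, contradicting the antichain property of $\max_<J$), giving LHS $\leq 1=x_a-x_b$. If $a\notin J$, then $b$ and all $p_i$ also lie outside $J$, so both sides vanish.

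For $\cO_{C,O}(P,<,\omega_K)\subset V$ I would first identify the integer points of the polytope. Any integer point has the form $\mathbf 1_{L\cup A}$ with $L\subset P^*\cup O$ and $A\subset C$; the $r=0$ instances of (3) force $L$ to be a lower set of $(P^*\cup O,<)$ with $L\cap P^*=K$. Introduce $P_L\subset C$ as the set of $p\in C$ all of whose $P^*\cup O$-predecessors lie in $L$ and whose $P^*\cup O$-successors lie outside $L$. A direct analysis of the longer chain inequalities shows $A$ must be an antichain contained in $P_L$: if some $p\in A$ had a $P^*\cup O$-predecessor outside $L$ or a $P^*\cup O$-successor inside $L$, a suitable chain $a<p<b$ with endpoints in $P^*\cup O$ would yield $x_a-x_b\leq 0$ and so $x_p=0$; two comparable elements of $A$ would directly violate a chain inequality of type (3) with endpoints $a\in L$, $b\notin L$. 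Conversely, the construction $(L,A)\mapsto J=L\cup\{p\in C:p<q\text{ for some }q\in L\}\cup\{p\in P_L:p\leq a\text{ for some }a\in A\}$ yields an order ideal of $(P,<)$ with $J\cap P^*=K$, $J\cap(P^*\cup O)=L$, $\max_<J\cap C=A$, and hence $A(J)=L\cup A$; this gives a bijection and identifies the integer points of the MCOP as precisely the $\mathbf 1_{A(J)}$.

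The main obstacle is to ensure that every vertex of $\cO_{C,O}(P,<,\omega_K)$ is integer; once this is known the vertices must sit among $\{\mathbf 1_{A(J)}\}$ and the polytope coincides with their convex hull. My plan is to invoke the integrality of marked chain-order polytopes established in~\cite{FFLP}. Alternatively one can derive it directly by constructing a unimodular triangulation of the MCOP compatible with the projection $\pi$ onto the marked order polytope of $(P^*\cup O,P^*,\omega_K)$: triangulate the latter by chain simplices spanned by $\mathbf 1_{L_1},\dots,\mathbf 1_{L_k}$ for chains $L_1\subsetneq\dots\subsetneq L_k$ of lower sets, and verify that the preimage of each such simplex in the MCOP is itself a unimodular simplex with vertices $\mathbf 1_{A(J_1)},\dots,\mathbf 1_{A(J_k)}$ for a compatible chain $J_1\subset\dots\subset J_k$ of order ideals. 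Either route completes the identification $\cO_{C,O}(P,<,\omega_K)=V$.
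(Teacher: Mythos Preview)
Your main argument is correct and follows the same overall strategy as the paper: verify that each $\mathbf 1_{A(J)}$ lies in the MCOP, invoke integrality from~\cite{FFLP}, and then identify all integer points as the $\mathbf 1_{A(J)}$. The one genuine difference is in the last step: the paper counts integer points via the Ehrhart-equivalence of $\cO_{C,O}(P,<,\om_K)$ with the marked order polytope $\cO_{\varnothing,P\backslash P^*}(P,<,\om_K)$ (\cite[Corollary 2.5]{FFLP}), whose integer points are visibly the $\one_J$ with $J\cap P^*=K$; since the $\one_{A(J)}$ are pairwise distinct (because $A(J)$ generates $J$) and already account for this many points, they exhaust the integer points. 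Your route via an explicit bijection is more hands-on but equally valid, and avoids quoting the Ehrhart-equivalence result.

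One caution: your proposed alternative to citing~\cite{FFLP} for integrality does not work as stated. The preimage under $\pi$ of a chain simplex in the marked order polytope of $(P^*\cup O,<)$ is \emph{not} a simplex. Already the fiber over a single vertex $\mathbf 1_L$ consists of all $\mathbf 1_{L\cup A}$ with $A$ an antichain in $P_L$, which is (after projection) the chain polytope of $(P_L,<)$ and is a simplex only when $P_L$ is a chain. So this alternative route would need substantial repair; the clean option is simply to cite~\cite[Proposition 2.4]{FFLP} for integrality, as you (and the paper) do.
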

\begin{proof}
To see that $\mathbf 1_{A(J)}\in \cO_{C,O}(P,<,\om_K)$ note that conditions (1) and (2) in Definition~\ref{mcopdef} are obvious for $x=\mathbf 1_{A(J)}$ while condition (3) is verified as follows. If $x_a=x_b=1$, i.e.\ $a,b\in J$, then none of the $p_i$ are maximal in $J$ and both sides of the inequality are 0. If $x_a=x_b=0$, i.e.\ $a,b\notin J$, then no $p_i$ lies in $J$ and again both sides are 0. Otherwise we have $x_a=1$ and $x_b=0$ while the left-hand side cannot be greater than 1 because there is at most one element of $\max_<J$ among the $p_i$.

It is also evident that the $\one_{A(J)}$ are pairwise distinct since $A(J)$ generates the ideal $J$. Furthermore, by~\cite[Proposition 2.4]{FFLP} $\cO_{C,O}(P,<,\om_K)$ is a lattice polytope and it remains to show that it has no integer points other than the $\one_{A(J)}$. However, by~\cite[Corollary 2.5]{FFLP} $\cO_{C,O}(P,<,\om_K)$ has the same number of integer points as $\cO_{\varnothing,P\backslash P^*}(P,<,\om_K)$. The latter polytope consists of $x$ such that $x_p\ge x_q$ whenever $p<q$ and $x_p=(\om_K)_p$ for $p\in P^*$. The integer points in such a polytope are seen to be the $\one_J$ with $J\cap P^*=K$.
\end{proof}




Consider an order $<'$ on $P$ such that $p<'q$ if and only if $p<q$ and $p\notin P^*\cup O$. The following fact lets us consider the MRPP $\cR_\la(P,<,<')$.
\begin{proposition}
The set $\cJ$ is closed under $*_{P,<'}$.
\end{proposition}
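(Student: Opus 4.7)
Fix $J_1, J_2 \in \cJ$ and set $A = \max_{<'}(J_1 \cap J_2) \cap (\max_{<'} J_1 \cup \max_{<'} J_2)$, so that $J := J_1 *_{P,<'} J_2$ is by definition the $<'$-ideal generated by $A$. The first step is to note that $J \subseteq J_1 \cap J_2$: since $<'$ is weaker than $<$, each $J_i$ is itself a $<'$-ideal, so $J_1 \cap J_2$ is a $<'$-ideal containing $A$, and must therefore contain the whole $<'$-ideal it generates. This containment will be used throughout.

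To prove $J \in \cJ(P,<)$ I would take $r \in J$ and $s \in P$ with $s < r$, and split on the partition $P = P^* \sqcup C \sqcup O$. In either case $s \in J_1 \cap J_2$ follows from $r \in J_1 \cap J_2$ and the $<$-ideal property of the $J_i$. If $s \in C$, pick $a \in A$ with $r \le' a$; since $<'$ is weaker than $<$, $r \le a$, and transitivity of $<$ gives $s < a$. The defining condition of $<'$ then yields $s <' a$ (using $s \in C$), so $s$ lies in the $<'$-ideal generated by $A$, i.e.\ in $J$. If $s \in P^* \cup O$, the definition of $<'$ precludes any relation $s <' t$; consequently $s$ is $<'$-maximal in every subset of $P$ containing it, in particular in $J_1$ and in $J_1 \cap J_2$, yielding $s \in \max_{<'}(J_1 \cap J_2) \cap \max_{<'} J_1 \subseteq A \subseteq J$.

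The only subtle point I expect is the case $s \in P^* \cup O$: one must recognize that such $s$ cannot sit strictly $<'$-below anything and so can only enter a $<'$-ideal as a generator. This is exactly what property (iii) of the setup buys us, and it is what makes the proposed $<'$ work --- if one were, for instance, to retain relations $p <' q$ with $p \in P^*$, the second case would collapse and $\cJ$ would not in general be closed under $*_{P,<'}$.
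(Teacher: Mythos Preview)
Your proof is correct and follows essentially the same idea as the paper's. The paper argues by contradiction: assuming $p\notin J$ and $q\in J$ with $p<q$, it first uses that $J$ is a $<'$-ideal to force $p\in P^*\cup O$, then observes that such $p$ is $<'$-maximal in $J_1\cap J_2\ni p$ and in $J_1$, hence $p\in A\subset J$. Your direct case split does the same work; note that your Case~1 can be shortened by observing that $s\in C$ and $s<r$ already give $s<'r$, so $s\in J$ follows immediately from $J$ being a $<'$-ideal without passing through~$a$.
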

\begin{proof}
For $J_1,J_2\in\cJ$ suppose that \[J=J_1*_{P,<'}J_2\notin\cJ.\] This means that we have $p\notin J$ and $q\in J$ for some $p<q$. Since $J\in\cJ(P,<')$, this means that $p\in P^*\cup O$. By~\eqref{star} $J$ is contained in $J_1\cap J_2$, hence $q,p\in J_1\cap J_2$. Note that $P^*\cup O\subset\max_{<'} P$ and, therefore, $p$ is $<'$-maximal in every containing subset. Thus formula~\eqref{star} provides $p\in J$ contradicting our assumption.
\end{proof}

We can now easily deduce that MRPPs are at least as general as MCOPs.
\begin{theorem}\label{mcopasmrpp}
$\cO_{C,O}(P,<,\la)=\cR_\la(P,<,<')$.
\end{theorem}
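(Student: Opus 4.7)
The plan is to reduce to the fundamental case $\la=\om_K$ by exploiting the Minkowski-additivity shared by both families of polytopes, verify the fundamental case by a direct comparison of vertex sets, and handle general markings by a routine shift.

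First, I would dispose of the fundamental case $\la=\om_K$ with $K\in\cJ^*$. By Proposition~\ref{elementary}, $\cO_{C,O}(P,<,\om_K)$ is the convex hull of the points $\mathbf 1_{A(J)}$ for $J\in\cJ$ with $J\cap P^*=K$, where $A(J)=(J\cap(P^*\cup O))\cup\max_< J$. By Definition~\ref{funddef}, $\cR_{\om_K}(P,<,<')$ is the convex hull of $\mathbf 1_{\max_{<'}J}$ for the same family of $J$. It therefore suffices to show $A(J)=\max_{<'}J$. The defining relation $p<'q$ iff $p<q$ and $p\in C$ has two consequences: an element of $P^*\cup O$ has no $<'$-successor and so belongs to $\max_{<'}J$ whenever it lies in $J$, whereas an element $p\in C\cap J$ lies in $\max_{<'}J$ precisely when it is $<$-maximal in $J$. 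Combining these, $\max_{<'}J=(J\cap(P^*\cup O))\cup(\max_< J\cap C)$, which equals $A(J)$ since $\max_< J\cap(P^*\cup O)\subseteq J\cap(P^*\cup O)$.

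Next, for a non-negative dominant $\la$, I would combine the two Minkowski decompositions. Proposition~\ref{markingdecomp} together with Definition~\ref{markeddef} gives $\cR_\la(P,<,<')=\sum_{K\in\cJ^*}\alpha_K(\la)\,\cR_{\om_K}(P,<,<')$, while Proposition~\ref{elementarydecomp} yields a parallel decomposition $\cO_{C,O}(P,<,\la)=\sum_{i}c_i\,\cO_{C,O}(P,<,\om_{K_i})$ along a strictly decreasing chain ending at a non-empty set. Reversing this chain and dropping zero-coefficient terms produces a strictly increasing chain with positive integer coefficients; the uniqueness assertion in Proposition~\ref{markingdecomp} then identifies the two decompositions of $\la$, so both Minkowski sums use matching coefficients. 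Applying the fundamental case termwise gives $\cO_{C,O}(P,<,\la)=\cR_\la(P,<,<')$ for all non-negative~$\la$.

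Finally, for an arbitrary dominant $\la$, choose $\beta\in\bZ$ so that $\la+\beta\om_{P^*}$ is non-negative. Definition~\ref{markeddef} gives $\cR_{\la+\beta\om_{P^*}}(P,<,<')=\cR_\la(P,<,<')+\beta\mathbf 1_{\max_{<'}P}$, and a direct inspection of conditions (1)--(3) in Definition~\ref{mcopdef} yields $\cO_{C,O}(P,<,\la+\beta\om_{P^*})=\cO_{C,O}(P,<,\la)+\beta\mathbf 1_{P^*\cup O}$. Because $C$ contains no $<$-maximal elements of $P$ (all maximal elements lie in $P^*$), we have $\max_{<'}P=P^*\cup O$, so the two translations coincide and the equality descends from the non-negative case.

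The main obstacle is the bookkeeping needed to compare the decompositions in Propositions~\ref{markingdecomp} and~\ref{elementarydecomp}: their chains in $\cJ^*$ are written in opposite directions and, a priori, allow different ``endpoint'' behaviour (in particular, whether $P^*$ itself appears in the chain), so one must argue carefully, via the uniqueness of the non-negative decomposition, that both polytopes split as the \emph{same} Minkowski sum of fundamental pieces. Once this identification is in place, every remaining step is a straightforward verification.
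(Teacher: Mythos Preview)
Your proposal is correct and follows essentially the same route as the paper: first verify the fundamental case by identifying $A(J)=\max_{<'}J$, then match the two Minkowski decompositions for nonnegative $\la$ using the uniqueness in Proposition~\ref{markingdecomp}, and finally reduce the general case to the nonnegative one by a shift. Your handling of the last step, via the explicit translation $\beta\mathbf 1_{P^*\cup O}=\beta\mathbf 1_{\max_{<'}P}$, is a bit more detailed than the paper's, which instead notes that in the decomposition of Proposition~\ref{elementarydecomp} only $c_0$ can be negative.
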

\begin{proof}
First suppose that $\la=\om_F$ is fundamental. Then the claim is immediate from comparing Definition~\ref{funddef} with Proposition~\ref{elementary}.

Now consider a general $\la$ together with the $K_i$ and $c_i$ given by Proposition~\ref{elementarydecomp}. One easily checks that for a nonnegative $\la$ all $c_i$ satisfy $c_i=\alpha_{K_i}(\la)$. For other $\la$ the only negative $c_i$ is $c_0$, i.e.\ $\la+c_0\om_{P^*}$ is nonnegative and $c_i=\alpha_{K_i}(\la+c_0\om_{P^*})$ for $c_i>0$. In both cases the claim follows by comparing Definition~\ref{markeddef} with Proposition~\ref{elementarydecomp}.
\end{proof}

\begin{remark}
It should be mentioned that in~\cite{FFLP,FFP} MCOPs are studied within a larger continuous family termed \textit{marked poset polytopes}. However, the key combinatorial properties such as integrality, normality, pairwise Ehrhart-equivalence, etc.\ are proved in the generality of Definition~\ref{mcopdef} and do not hold for general marked poset polytopes.
\end{remark}

\begin{remark}
Some of the above results can be applied to deduce curious new properties of MCOPs. In particular, using Theorem~\ref{mrppmultiproj} one obtains a multiprojective realization of an MCOP's toric variety and using Theorem~\ref{standard} one deduces that every MCOP is unimodularly equivalent to another MCOP $\cO_{C,O}(Q,\ll,\mu)$ with $(Q^*,\ll)$ linearly ordered. Furthermore, Theorem~\ref{mrppmain} can be applied to the MRPP $\cR_\la(P,<,<')$ providing a family of semitoric degenerations of the toric variety of the coinciding MCOP $\cO_{C,O}(P,<,\la)$ with components given by MRPPs.
\end{remark}

Note that when $C=\varnothing$ the polytope $\cO_{C,O}(P,<,\la)$ is given by the inequalities $x_p\ge x_q$ for $p<q$. When $O=\varnothing$ the polytope is given by $x_p\ge 0$ for $p\in C$ and \[x_{p_1}+\dots+x_{p_r}\le \la_b-\la_a\] for any chain $a<p_1<\dots<p_r<b$ with $a,b\in P^*$ and $p_i\in C$. In particular, for $(P_\bd,<)$, $P_\bd^*$ and $\la$ as in the previous subsection the polytopes $\cO_{\varnothing,O}(P_\bd,<,\la)$ and $\cO_{C,\varnothing}(P_\bd,<,\la)$ given by these inequalities are known, respectively, as the Gelfand--Tsetlin polytope~(\cite{GT}) and the FFLV polytope~(\cite{FFL1}) of the $\mathfrak{gl}_n$-weight $\om_{d_1}+\dots+\om_{d_{l-1}}$ where $\om_i$ denotes the $i$th fundamental weight (see also~\cite{ABS}). We point out that it is more standard to consider Gelfand--Tsetlin and FFLV polytopes inside a larger space with coordinates labeled by \textit{all} pairs $1\le i\le j\le n$ but in both cases coordinates corresponding to $i,j$ with $p_{i,j}\notin P_\bd$ are constant throughout the polytope. Hence, Theorem~\ref{mcopasmrpp} provides
\begin{cor}\label{gtfflv}
In the notations of Subsection~\ref{flags}, the polytope $\cR_\la(P_\bd,<,<_\varnothing)$ is the Gelfand--Tsetlin polytope of the $\mathfrak{gl}_n$-weight $\om_{d_1}+\dots+\om_{d_{l-1}}$ and $\cR_\la(P_\bd,<,<')$ is the FFLV polytope of this weight.
\end{cor}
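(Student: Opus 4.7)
The plan is to derive this corollary directly from Theorem~\ref{mcopasmrpp} by choosing the parameters $C$ and $O$ appropriately in the MCOP construction so that the order $<'$ that Theorem~\ref{mcopasmrpp} produces matches each of the two orders appearing in Subsection~\ref{flags}.

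First I would handle the Gelfand--Tsetlin case. Take $C=\varnothing$ and $O=P_\bd\backslash P_\bd^*$ in the setup preceding Theorem~\ref{mcopasmrpp}. Then for any $p,q\in P_\bd$ the condition $p\notin P_\bd^*\cup O$ is never satisfied, so the order $<'$ built from $C,O$ in that subsection is the trivial order $<_\varnothing$. Theorem~\ref{mcopasmrpp} then gives $\cR_\la(P_\bd,<,<_\varnothing)=\cO_{\varnothing,O}(P_\bd,<,\la)$. Inspecting Definition~\ref{mcopdef} with $C=\varnothing$, the only nontrivial constraints are $x_p=\la_p$ on $P_\bd^*$ and $x_p\le x_q$ for $p<q$ in $P_\bd$, which is precisely the Gelfand--Tsetlin polytope of the weight $\om_{d_1}+\dots+\om_{d_{l-1}}$ after forgetting the frozen coordinates outside $P_\bd$ (as noted in the paragraph preceding the corollary).

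For the FFLV case I would make the opposite choice: $O=\varnothing$ and $C=P_\bd\backslash P_\bd^*$. Then the condition $p\notin P_\bd^*\cup O$ becomes $p\notin P_\bd^*$, so the order $<'$ produced by Theorem~\ref{mcopasmrpp} is exactly the order $<'$ on $P_\bd$ introduced in Subsection~\ref{flags} for the FFLV degeneration. Thus $\cR_\la(P_\bd,<,<')=\cO_{C,\varnothing}(P_\bd,<,\la)$, and Definition~\ref{mcopdef} with $O=\varnothing$ yields the non-negativity constraints on $C$ together with the saturated-chain inequalities $x_{p_1}+\dots+x_{p_r}\le \la_a-\la_b$ for chains $a<p_1<\dots<p_r<b$ with $a,b\in P_\bd^*$. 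These are precisely the FFLV inequalities for the weight $\om_{d_1}+\dots+\om_{d_{l-1}}$, again after identifying the frozen coordinates.

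The only real content beyond invoking Theorem~\ref{mcopasmrpp} is to verify that the $<'$ defined implicitly in that theorem coincides in each case with the $<_\varnothing$ and $<'$ used in Subsection~\ref{flags}, and to match Definition~\ref{mcopdef} with the standard defining inequalities of the GT and FFLV polytopes. The latter is essentially a bookkeeping step using the explicit description of $(P_\bd,<)$ and the marking $\la$; I expect the mildest obstacle to be the cosmetic verification that the values $\la_{\tilde p_i}=i$ produce the correct right-hand sides $\la_a-\la_b$ in the FFLV inequalities and match the standard conventions, but this is immediate once one writes out a saturated chain from $\tilde p_{i}$ up to $\tilde p_{j}$.
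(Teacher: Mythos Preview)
Your proposal is correct and follows essentially the same route as the paper: the paragraph preceding the corollary identifies $\cO_{\varnothing,O}(P_\bd,<,\la)$ and $\cO_{C,\varnothing}(P_\bd,<,\la)$ with the Gelfand--Tsetlin and FFLV polytopes via Definition~\ref{mcopdef}, and the corollary is then immediate from Theorem~\ref{mcopasmrpp} with exactly the two choices of $(C,O)$ you describe. Your explicit verification that the order $<'$ built in the MCOP subsection specializes to $<_\varnothing$ and to the $<'$ of Subsection~\ref{flags}$ $ is a helpful detail that the paper leaves implicit.
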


For general $(P,<)$ and $\la$ the set of MRPPs of the form $\cR_\la(P,<,<')$ is strictly larger than the set of MCOPs of the form $\cO_{C,O}(P,<,\la)$. This can already be seen for posets with five elements. More importantly for us, MRPPs that are not MCOPs arise in the descriptions of semitoric degenerations of flag varieties given by Theorem~\ref{mainFFLV}. This is best seen in the Grassmannian case corresponding to $\bd=\{0,k,n\}$. In this case $\la=\om_{K_k}$ is fundamental and $\cJ(P_\bd,<)$ is in bijection with the vertex set of the FFLV polytope \[\Pi_{k,n}=\cR_\la(P_\bd,<,<')=\cO_{P_\bd\backslash P^*,\varnothing}(P_\bd,<,\la).\] The subdivision considered in the theorem is found directly from Zhu's theorem as $\Theta_{\Pi_{k,n}}(w)$. Since a part in this subdivision has the form $\cR_\la(P_\bd,<'',<')$, its vertices are parametrized by $\cJ(P_\bd,<'')$ and, in view of Proposition~\ref{elementary}, it is natural to ask whether this part actually has the form $\cO_{C,O}(P_\bd,<'',\la)$ for some $C$ and $O$. This turns out to be false in general.

\begin{example}\label{notmcop}
Let $n=5$ and $\bd=\{0,2,5\}$ corresponding to the Grassmannian $\Gr(2,5)$. Choose $w\in\bR^{\cJ(P_\bd,<)}$ with $w_{J_1}=1$ for $J_1=\{\tilde p_1,p_{1,3},p_{1,4},p_{1,5}\}$ and the other 9 coordinates $w_J=0$. Then $\Theta_{\Pi_{2,5}}(w)$ consists of two parts: a polytope $Q$ with 9 vertices (those for which $w_J=0$) and a simplex. The vertices of $Q$ correspond to $J\in \cJ(P_5,<'')$ where $<''$ is obtained from $<$ by adding the relation $p_{2,3}<''p_{1,5}$. One may check that $Q$ does not have the form $\cO_{C,O}(P_\bd,<'',\la)$ for any $C$ and $O$ which motivates us to search for a larger family of poset polytopes associated with $(P_\bd,<'')$.
\end{example}

\end{document}